\documentclass[10pt, leqno]{article}
\usepackage[bottom]{footmisc}
\usepackage[margin=53mm, top=26mm, bottom=34mm, footskip=24pt]{geometry}
\usepackage{fancyhdr}
\pagestyle{fancy}
\fancyhf{} 

\makeatletter
\renewcommand{\maketitle}{\bgroup\setlength{\parindent}{0pt}
\begin{flushleft}
  \@author\\
  \@date
  \vspace{12pt}
  
  \textbf{\Large\@title}
\end{flushleft}\egroup
}
\renewenvironment{abstract}{
  \vspace{13pt}
  \begin{flushleft}%
      {\bfseries \abstractname\vspace{-.5em}}
  \end{flushleft}%
}
\makeatother
\usepackage{titlesec}
\titleformat{\section}
  {\normalfont\large\bfseries}{\thesection}{.5em}{}
\titlespacing\section{0pt}{12pt plus 4pt minus 2pt}{4pt plus 1pt minus 2pt}
\titleformat{\subsection}
  {\normalfont\normalsize\bfseries}{\thesubsection}{.4em}{}
\titlespacing\subsection{0pt}{6pt plus 2pt minus 1pt}{0pt plus 1pt minus 2pt}
\titleformat{\subsubsection}[runin]
  {\normalfont\normalsize\bfseries}{\thesubsubsection}{.3em}{\addperiod}
\titlespacing\subsubsection{0pt}{8pt plus 4pt minus 2pt}{4pt plus 1pt minus 2pt}
\newcommand{\addperiod}[1]{#1.}
\usepackage[page]{totalcount} 
\usepackage{microtype} 
\usepackage[parfill]{parskip} 
\usepackage{enumitem}
\usepackage[small,it,margin=2pt]{caption}


\usepackage[utf8]{inputenc}
\usepackage[T1]{fontenc}
\usepackage[greek,english]{babel} 
\let\LGRtextnu\textnu
\usepackage[scaled=0.96]{XCharter}
\let\textnu\LGRtextnu
\usepackage[scaled=1.03, varqu, varl]{inconsolata}
\usepackage[scaled=.95]{sourcesanspro}
\usepackage{amsthm}
\usepackage[xcharter, vvarbb, scaled=1.05]{newtxmath}
\usepackage[scr=boondoxo, frak=boondox, scrscaled=.97, bb=esstix, bbscaled=.96]{mathalfa}

\makeatletter
\DeclareSymbolFont{timesoperators}{T1}{\sfdefault}{m}{n}
\def\operator@font{\mathgroup\symtimesoperators}
\makeatother

\usepackage{substitutefont}
\substitutefont{LGR}{\rmdefault}{artemisia}

\DeclareSymbolFont{greekletters}{LGR}{artemisia}{m}{it}
\SetSymbolFont{greekletters}{bold}{LGR}{artemisia}{b}{it}
\DeclareSymbolFont{ucgreekletters}{LGR}{artemisia}{m}{n}
\SetSymbolFont{ucgreekletters}{bold}{LGR}{artemisia}{b}{n}
\DeclareMathSymbol{\alpha}{\mathord}{greekletters}{`a}
\DeclareMathSymbol{\beta}{\mathord}{greekletters}{`b}
\DeclareMathSymbol{\gamma}{\mathord}{greekletters}{`g}
\DeclareMathSymbol{\delta}{\mathord}{greekletters}{`d}
\DeclareMathSymbol{\epsilon}{\mathord}{greekletters}{`e}
\DeclareMathSymbol{\zeta}{\mathord}{greekletters}{`z}
\DeclareMathSymbol{\eta}{\mathord}{greekletters}{`h}
\DeclareMathSymbol{\theta}{\mathord}{greekletters}{`j}
\DeclareMathSymbol{\iota}{\mathord}{greekletters}{`i}
\DeclareMathSymbol{\kappa}{\mathord}{greekletters}{`k}
\DeclareMathSymbol{\lambda}{\mathord}{greekletters}{`l}
\DeclareMathSymbol{\mu}{\mathord}{greekletters}{`m}
\DeclareMathSymbol{\nu}{\mathord}{greekletters}{`n}
\DeclareMathSymbol{\omicron}{\mathord}{greekletters}{`o}
\DeclareMathSymbol{\xi}{\mathord}{greekletters}{`x}
\DeclareMathSymbol{\pi}{\mathord}{greekletters}{`p}
\DeclareMathSymbol{\rho}{\mathord}{greekletters}{`r}
\DeclareMathSymbol{\sigma}{\mathord}{greekletters}{`s}
\DeclareMathSymbol{\tau}{\mathord}{greekletters}{`t}
\DeclareMathSymbol{\upsilon}{\mathord}{greekletters}{`u}
\DeclareMathSymbol{\phi}{\mathord}{greekletters}{`f}
\DeclareMathSymbol{\chi}{\mathord}{greekletters}{`q}
\DeclareMathSymbol{\psi}{\mathord}{greekletters}{`y}
\DeclareMathSymbol{\omega}{\mathord}{greekletters}{`w}
\DeclareMathSymbol{\varepsilon}{\mathord}{greekletters}{`e}
\DeclareMathSymbol{\vartheta}{\mathord}{greekletters}{`j}
\DeclareMathSymbol{\varpi}{\mathord}{greekletters}{`p}
\DeclareMathSymbol{\varrho}{\mathord}{greekletters}{`r}
\DeclareMathSymbol{\varsigma}{\mathord}{greekletters}{`v}
\DeclareMathSymbol{\varphi}{\mathord}{greekletters}{`f}
\DeclareMathSymbol{\Alpha}{\mathord}{ucgreekletters}{`A}
\DeclareMathSymbol{\Beta}{\mathord}{ucgreekletters}{`B}
\DeclareMathSymbol{\Gamma}{\mathord}{ucgreekletters}{`G}
\DeclareMathSymbol{\Delta}{\mathord}{ucgreekletters}{`D}
\DeclareMathSymbol{\Epsilon}{\mathord}{ucgreekletters}{`E}
\DeclareMathSymbol{\Zeta}{\mathord}{ucgreekletters}{`Z}
\DeclareMathSymbol{\Eta}{\mathord}{ucgreekletters}{`H}
\DeclareMathSymbol{\Theta}{\mathord}{ucgreekletters}{`J}
\DeclareMathSymbol{\Iota}{\mathord}{ucgreekletters}{`I}
\DeclareMathSymbol{\Kappa}{\mathord}{ucgreekletters}{`K}
\DeclareMathSymbol{\Lambda}{\mathord}{ucgreekletters}{`L}
\DeclareMathSymbol{\Mu}{\mathord}{ucgreekletters}{`M}
\DeclareMathSymbol{\Nu}{\mathord}{ucgreekletters}{`N}
\DeclareMathSymbol{\Xi}{\mathord}{ucgreekletters}{`X}
\DeclareMathSymbol{\Omicron}{\mathord}{ucgreekletters}{`O}
\DeclareMathSymbol{\Pi}{\mathord}{ucgreekletters}{`P}
\DeclareMathSymbol{\Rho}{\mathord}{ucgreekletters}{`R}
\DeclareMathSymbol{\Sigma}{\mathord}{ucgreekletters}{`S}
\DeclareMathSymbol{\Tau}{\mathord}{ucgreekletters}{`T}
\DeclareMathSymbol{\Upsilon}{\mathord}{ucgreekletters}{`U}
\DeclareMathSymbol{\Phi}{\mathord}{ucgreekletters}{`F}
\DeclareMathSymbol{\Chi}{\mathord}{ucgreekletters}{`Q}
\DeclareMathSymbol{\Psi}{\mathord}{ucgreekletters}{`Y}
\DeclareMathSymbol{\Omega}{\mathord}{ucgreekletters}{`W}

\makeatletter
\newtheoremstyle{frenchplain}
  {12pt plus 4pt minus 2pt}
  {6pt}
  {}
  {}
  {\rmfamily\itshape}
  {.}
  {.5em}
  {}
\makeatother


\theoremstyle{frenchplain}
\newtheorem{thm}{Theorem}[section]
\newtheorem{prop}[thm]{Proposition}
\newtheorem{lem}[thm]{Lemma}
\newtheorem{cor}[thm]{Corollary}
\newtheorem*{thm*}{Theorem}
\newtheorem*{conj*}{Conjecture}

\usepackage{graphicx}
\usepackage{tikz}
\usepackage{tikz-cd}
\usetikzlibrary{matrix,arrows}
\usetikzlibrary{shapes.geometric}
\usepgflibrary[arrows]
\usepgflibrary{arrows}
\usetikzlibrary[arrows]
\usetikzlibrary{decorations.pathmorphing}
\usetikzlibrary{decorations.markings}       
\tikzset{
  graphnode/.style = {align=center, inner sep=0pt, scale=0.3, text centered,
    font=\sffamily},
  vi/.style = {graphnode, circle, white, font=\sffamily\bfseries, draw=black,
    fill=black, text width=1em},
  ve/.style = {graphnode, circle, draw=black, 
    text width=1em, thick},
  vx/.style = {graphnode, draw=white,
    minimum width=0em, minimum height=0em},
  vb/.style = {graphnode, diamond, white, draw=black, 
    minimum width=1em, minimum height=1em, thick},
}
\tikzset{every loop/.style={}}  
\tikzset{commutative diagrams/arrow style=math font}
\newcommand{\pent}{\hspace{.4pt}
  \begin{tikzpicture}[font=\scriptsize, baseline=-.7ex] 
    \draw [use as bounding box, transparent, white] (-2mm,-2mm) rectangle (2mm,2mm);
    \node[regular polygon, shape border rotate=36, inner sep=.97mm,
          regular polygon sides=5, minimum size=1mm, draw] at (0,0) (A) {};
    \path (A.corner 1) edge node[auto] {} (A.corner 4)
          (A.corner 2) edge node[auto] {} (A.corner 5);
  \end{tikzpicture}\hspace{-.7pt}
}
\newcommand{\triang}{\hspace{.5pt}
  \begin{tikzpicture}[font=\scriptsize, baseline={([yshift=-.65ex]current bounding box.center)}]
    \node[regular polygon, shape border rotate=60, inner sep=.56mm,
          regular polygon sides=3, minimum size=1mm, draw] at (0,0) (A) {};
  \end{tikzpicture}\hspace{.1pt}
}

\newcommand*{\Cdot}[1][1.25]{%
  \mathpalette{\CdotAux{#1}}\cdot%
}
\newdimen\CdotAxis
\newcommand*{\CdotAux}[3]{%
  {%
    \settoheight\CdotAxis{$#2\vcenter{}$}%
    \sbox0{%
      \raisebox\CdotAxis{%
        \scalebox{#1}{%
          \raisebox{-\CdotAxis}{%
            $\mathsurround=0pt #2#3$%
          }%
        }%
      }%
    }%
    \dp0=0pt %
    \sbox2{$#2\bullet$}%
    \ifdim\ht2<\ht0 %
      \ht0=\ht2 %
    \fi
    \sbox2{$\mathsurround=0pt #2#3$}%
    \hbox to \wd2{\hss\usebox{0}\hss}%
  }%
}


\usepackage{etoolbox}
\appto{\proof}{\unskip} 
\usepackage{mathtools}

\DeclareMathOperator{\dr}{\mathit{d}}
\DeclareMathOperator{\Reg}{\mathsf{Reg{\mkern 1mu}}}
\DeclareMathOperator{\Regr}{\mathsf{Regr{\mkern 1mu}}}
\DeclareMathOperator{\Spec}{\mathsf{Spec{\mkern 1mu}}}
\newcommand{\gr}[1]{\mathsf{gr}_{#1}{\mkern 2mu}}
\newcommand{\bbrackets}[1]{[\hspace{-1.3pt}[#1]\hspace{-1.3pt}]}
\newcommand{\Bbrackets}[1]{\bigl[\hspace{-2pt}\bigl[#1\bigr]\hspace{-2pt}\bigr]}
\newcommand{\susp}[1]{{\textstyle{s^{\hspace{.1pt}#1}}}\hspace{.3pt}}
\DeclareMathOperator{\Res}{\mathsf{Res}}
\DeclareMathOperator{\reg}{\mathsf{reg}}
\DeclareMathOperator{\id}{\mathsf{id}}
\newcommand{\Bimod}[1]{\operatorname{\textbf{inf\;}{\mathit{#1}}-\textbf{biMod}}}
\newcommand{\Mod}[1]{\operatorname{\textbf{Mod}-\mathit{#1}}}
\newcommand{\Coll}{\textbf{Coll}}
\newcommand{\dgV}{\textbf{dg\,Vect}}
\newcommand{\Md}{M^{\delta}}
\newcommand{\antishriek}{\hspace{.3pt}\text{!`}}

\newcommand{\Map}{\mathsf{Map}}
\newcommand{\Def}{\mathsf{Def}}
\newcommand{\Der}{\mathsf{Der}}
\newcommand{\As}{\mathsf{As}}
\newcommand{\Ass}{\mathsf{Ass}}
\newcommand{\Com}{\mathsf{Com}}
\newcommand{\Lie}{\mathsf{Lie}}
\newcommand{\BVGra}{
  \mathsf{Gra}^{\hspace{-5pt}\scalebox{.5}{
    \begin{tikzpicture}[baseline=0ex,shorten >=0pt,auto,node distance=.4cm]
      \node[ve] (w) {};
      \path[thick]
        (w) edge[out=135, in=45, loop] (w); 
    \end{tikzpicture}
  }\hspace{-4.5pt}}
}
\newcommand{\BVGraphs}{\mathsf{BVGraphs}}
\newcommand{\Graphs}{\mathsf{Graphs}}

\newcommand{\Ger}{\mathsf{Ger}}

\newcommand{\bS}{{\mkern 1mu}\mathbb{S}}
\newcommand{\Conv}{\mathbin{\scalebox{1.5}{\raisebox{-0.2ex}{$\ast$}}}}
\newcommand{\sconv}{{\mkern 1mu}\scalebox{1}{\raisebox{-0.1ex}{$\ast$}}{\mkern 1mu}}
\makeatletter
\newcommand{\superimpose}[2]{%
  {\ooalign{$#1\@firstoftwo#2$\cr\hfil$#1\@secondoftwo#2$\hfil\cr}}
}
\makeatother
\newcommand{\infcirc}{\mathbin{\mathpalette\superimpose{{\circ}{\cdot}}}}

\newcommand{\rO}{\mathscr{O}}
\newcommand{\rM}{\mathscr{M}}
\newcommand{\rW}{\mathscr{W}}

\newcommand{\C}{\mathbb{C}}
\newcommand{\N}{\mathbb{N}}
\newcommand{\Z}{\mathbb{Z}}
\newcommand{\Q}{\mathbb{Q}}
\newcommand{\R}{\mathbb{R}}

\title{The Grothendieck-Teichmueller Lie algebra\\ and Brown's dihedral moduli spaces}
\author{Johan Alm\\
        Stockholm University\\
        E-mail: \texttt{alm@math.su.se}
        }
\date{\today}
\lfoot{\normalfont\scshape\small GRT and Brown's moduli spaces}
\rfoot{\normalfont\scshape\small Page \thepage~of~\totalpages}

\makeatletter
\def\blfootnote{\gdef\@thefnmark{}\@footnotetext}
\makeatother

\begin{document}
\frenchspacing
\maketitle\thispagestyle{fancy}

\begin{abstract}
We prove that the degree zero Hochschild-type cohomology of the homology operad of Francis Brown's dihedral moduli spaces is equal to the Grothendieck-Teichmueller Lie algebra plus two classes. This result significantly elucidates the (in part still conjectural) relation between the Grothendieck-Teichmueller Lie algebra and (motivic) multiple zeta values.
\end{abstract}


\section*{Introduction}
F.~Brown defined an affine partial compactification \({\displaystyle M^{\delta}_{0,n+1}}\) of the moduli space \(M_{0,n+1}\) of smooth genus zero curves with \(n+1\) marked points, in his pioneering work \cite{Brown09} on multiple zeta values. The collection \(M^{\delta}\) of these varieties naturally assemble as a nonsymmetric operad, such that the corresponding homology operad \(H_*(M^{\delta})\) (with rational coefficients) has a morphism \(\As\to H_*(M^{\delta})\) from the associative operad, mapping the binary generator to the class of the point \(\{\mathsf{pt}\} = \Md_{0,2+1}\). We prove that the degree zero cohomology of the associated Hochschild-type cohomology complex equals the Grothendieck-Teichm\"uller Lie algebra plus two classes:
\[
  R^0\Der\bigl(\As,H_*(M^{\delta})\bigr) \cong \Q\triang\oplus\Q\pent\oplus\mathfrak{grt}_1.
\]
With the conventions adopted in the paper, the result can also be expressed by saying that the degree one cohomology of the deformation complex of the induced morphism \(\As_{\infty}\to H_*(M^{\delta})\) equals \(\Q\pent\oplus\mathfrak{grt}_1\). The graphical notation in the formula stems from certain diagrammatic techniques used in the proof. Topologically, \(\triang\) is given by the fundamental class of \(\smash{\Md_{0,2+1}}\) while \(\pent\,\) is a basis vector of \(H_2(\Md_{0,4+1})\).

Our result can be phrased without reference to operads, as follows. The moduli spaces of Brown form a cosimplicial variety, with coface maps by gluing the point \(\smash{M^{\delta}_{0,2+1}}\) (``doubling'' a marked point) and codegeneracy maps given by forgetting a marked point. The cohomology \(R^*\Der(\As,H_*(M^{\delta}))\) is the cohomology of the total complex
\[
  \prod_{n\geq 2} H_{*+n-2}(\Md_{0,n+1})\;\;\;\text{with cosimplicial differential}.
\]
However, the operadic perspective has an absolutely fundamental role in our arguments. Indeed, our paper is part of a growing literature on the surprising ubiquity of the Grothendieck-Teichm\"uller Lie algebra in various operadic deformation problems: we mention here, e.g., the early preprint \cite{Tamarkin02} by Tamarkin, Willwacher's seminal \cite{Willwacher15}, and the papers \cite{MerkulovWillwacher15, TurchinWillwacher18} by Willwacher and collaborators.

The presentation of the Grothendieck-Teichm\"uller Lie algebra implied by our result is in some respects more accessible than the usual. In particular, it seems easier to write down explicit elements of \(R^0\Der(\As,H_*(M^{\delta}))\) than of \(\mathfrak{grt}_1\), as it is usually defined. The result also elucidates the conjectural relationship between the Grothendieck-Teichm\"uller Lie algebra and the indecomposable quotient of the algebra of multiple zeta values, in that it by Brown's \cite{Brown09} almost tautologically implies an injection
\[
  \Q\pent\oplus\mathfrak{grt}_1 \hookleftarrow 
  \bigl(Z^{\mathfrak{m}}_+ / (Z^{\mathfrak{m}}_+)^2\bigr)^\vee
\]
from the dual of the indecomposable quotient of the algebra of motivic multiple zeta values, \(Z^{\mathfrak{m}}\). A corollary to Brown's \cite{Brown12} is that there exists a (non-canonical) isomorphism
\[
  \bigl(Z^{\mathfrak{m}}_+ / (Z^{\mathfrak{m}}_+)^2\bigr)^\vee \cong \Q\sigma_2 \oplus \Lie\bbrackets{\,\sigma_{2k+1}\mid k\geq 1\,},
\]
from which we deduce an injection of a free Lie algebra \(\Lie\bbrackets{\,\sigma_{2k+1}\,}\) into \(\mathfrak{grt}_1\). Such an injection is already known to exist; our result just offers a slightly new perspective, a perspective that may be of interest for future work on the Drinfeld conjecture, which states that this injection is, in fact, also surjective.\footnote{\; The conjecture in the presently cited form is somewhat of an anachronism. Deligne and Ihara conjectured that the Galois group of mixed Tate motives unramified over the integers embeds in the automorphism group of the unipotent fundamental group(oid) of \(M_{0,4}\). Drinfeld defined a certain subgroup of that automorphism group, the Grothendieck-Teichm\"uller group, and conjectured that it was the image of the motivic Galois group. Simplifying somewhat, one might say that the Deligne-Ihara conjecture stated that the free Lie algebra on odd generators injects into the algebra of formal Lie series in two variables equipped with the Ihara bracket (which is now known), whereas it was Drinfeld who offered \(\mathfrak{grt}_1\) as a conjectural presentation of the image.} In \ref{sec:conjecture} we use our results to give a very concrete combinatorial and cohomological reformulation of this long-standing conjecture.

The structure of the paper is as follows. Section 1 explains notation and recalls some preliminaries on deformations of operad morphisms. Section 2 introduces Brown's moduli spaces and associated objects. Section 3 briefly recalls the definition of the Grothendieck-Teichm\"uller Lie algebra and concludes with the construction of a map from \(\mathfrak{grt}_1\) into the operadic Hochschild cohomology; the map which we are to prove is an isomorphiusm. A key role here is played by a novel nonsymmetric operad structure on \(H_*(M_0)\) (it is not the Gravity operad), introduced by the author in \cite{Alm16} but given further detail and a conceptual clarification in terms of logarithmic geometry in this paper. Section 4 is a detour on the homology of the framed little disks (the Batalin-Vilkovisky operad), inspired by the work of Kontsevich \cite{Kontsevich99} and Willwacher \cite{Willwacher15}. Section 5 concludes, using the results of section 4, the proof of our main theorem. Section 6 is a short exposition on the implied relationship between the Grothendieck-Teichm\"uller Lie algebra and multiple zeta values. Finally, section 7 is more or less an extended appendix, devoted to clarifying an argument by Tamarkin \cite{Tamarkin02} that we use in section 3.

\vspace{3mm}
\noindent
\textit{Acknowledgement}. It is a pleasure to thank Dan Petersen for insightful comments, Francis Brown for pointing out the dubious usage of the term ``Deligne-Drinfeld conjecture'' in the first version of the preprint, and Anton Khoroshkin for finding the gap in \cite{Alm16} that forced me to improve the arguments of the present paper. 

\section{Notation and terminology}

Throughout, \([n]\) is the set \(\{1,\dots, n\}\). The cardinality of a finite set \(I\) is written \(\#I\) and we sometimes write \(I+J\) for disjoint union of finite sets.

We use cohomological convention for differentials, so all complexes that are traditionally computed with homological differentials have reversed grading. We consider the category of differential graded rational vector spaces as a symmetric monoidal category in the usual way with Koszul sign rules. The internal hom-functor is denoted \(\Map\) and suspensions by \(\susp{d}\), so \((\susp{d}V)^k = V^{d+k}\) for a differential graded vector space \(V\).

For a Malcev Lie algebra \(\mathfrak{g}\), we write \(C_*(\mathfrak{g})\) for the symmetric coalgebra \(\hat{S}(\susp{}\mathfrak{g})\) completed with respect to the tensor length and equipped with the Chevalley-Eilenberg differential, and we similarly write \(C^*(\mathfrak{g})\) for the dual continuous cochain complex, i.e., the symmetric algebra \(S(\susp{-1}\mathfrak{g}^\vee)\) on the desuspension of the continuous linear dual, with Chevalley-Eilenberg cochain differential. In all instances of the paper where these onventions apply it just so happens that \(\mathfrak{g}\) is isomorphic to the weight completion \(\prod_{i\geq 1}\gr{i}(\mathfrak{g})\) of its associated graded \(\bigoplus_{i\geq 1}\gr{i}(\mathfrak{g})\), and then the dual \(\mathfrak{g}^{\vee}\) is the sum \(\bigoplus_{i\geq 1}\gr{i}(\mathfrak{g})^{\vee}\) of the (ordinary) linear duals of each weight-graded piece.

For notation on operads we mostly adhere to the book \cite{LodayVallette12} by Loday and Vallette, a reference that contains details on all the foundational results on operads that we shall take for granted. A \emph{collection} of objects (in some category) is an indexed family \(\{U(n)\}_{n\geq 1}\) of objects (in that category). A collection is \emph{symmetric} if each \(U(n)\) has an action of the symmetric group \(\bS_n\) on \(n\) letters. We refer to the index \(n\) of \(U(n)\) as the \emph{arity}. Note that we do not allow collections to have something in arity zero. (Symmetric) collections (in a given category) constitute a category. Tensor products, cartesian products, direct sums, and etc, are always understood arity-wise, e.g., \((-)\otimes(-)\) is used for the arity-wise tensor product of collections. The \emph{plethysm} monoidal product on collections is written \((-)\circ(-)\) and the \emph{Day convolution} \((-)\Conv(-)\). The plethysm unit is denoted \(\mathbb{I}\). We denote (co)augmentation (co)kernels by a subscript \(+\)-sign (rather than an overline, as \cite{LodayVallette12} does), and write \((-)\infcirc(-)\) instead of \((-)\circ_{(1)}(-)\) for the \emph{infinitesimal plethysm}. The book \cite{LodayVallette12} refers to symmetric collections as \(\bS\)-modules, the arity-wise tensor product as the Hadamard product and the Day convolution as, simply, the tensor product. We assume all (co)operads \(E\) to be (co)augmented and have a splitting \(E=\mathbb{I}\oplus E_+\), so that in effect we can drop the distinction between (co)operads and pseudo-(co)operads. If there is little risk of confusion we sometimes refer to nonsymmetric (co)operads simply as ``(co)operads''.

In the following subsections we fix further details on our conventions.

\subsection{Differential graded operads}

By \emph{differential graded operads} we mean operads in the category of differential graded vector spaces. If \(P\) is a differential graded operad then, in appropriate cases, the collection of the dual spaces \(P(n)^{\vee}\) will be a differential graded cooperad (and vice versa, cooperads can under appropriate assumptions be dualised to operads). We denote this linearly dual cooperad \(\mathit{coP}\).

Given a differential graded collection \(P\) we define its \emph{operadic suspension} \(\Sigma P\) by \(\Sigma P(n) = s^{1-n}\mathsf{sgn}_n\otimes P(n)\). If \(P\) is an operad or cooperad, then \(\Sigma P\) is as well. The convention is such that \(\Sigma\mathsf{End}(V) = \mathsf{End}(sV)\). If \(P\) is a nonsymmetric collection, we simply omit the sign representation and only shift degrees in the definiton of \(\Sigma P\).

\subsection{Cohomology and deformation theory}

An \emph{infinitesimal bimodule} for an operad \(P\) is a collection \(M\) equipped with left and right actions
\[
  \lambda : P \infcirc M \to M, \;\;\; \rho: M\circ P \to M,
\]
satisfying a list of identities that amount to saying \(P\oplus M\) is an operad, with the property that the composite of at least two elements coming from \(M\) vanishes.

\subsubsection{Bimodule cohomology}

Let \(P\) be a differential graded operad and \(M\) be an infinitesimal \(P\)-bimodule. The \emph{cohomology of\, \(P\) with coefficients in \(M\)} is the total right derived functor of \(M\mapsto \Der(P,M)\). When \(P\) is a Koszul operad, the Koszul resolution \(P_{\infty}\) leads to a tractable complex: \(\Der_{P}(P_{\infty}, M)\cong \Map_{\bS}(\susp{-1}P^{\antishriek}_+, M)\) with an extra differential \(\delta\) defined by the module structure. We denote this complex
\[
  C(P,M) = \Map_{\bS}(s^{-1}P^{\antishriek}_+, M)\;\;\text{with}\;\delta.
\]
We do not give a general formula for the differential as in this paper we will only compute with \(P=\Ass\) or \(P=\Com\) and then the formula for the differential has an especially simple form. 

\subsubsection{Deformations of morphisms}

An operad morphism \(g:O \to P\) makes \(P\) an infinitesimal \(O\)-bimodule. The \emph{deformation complex} of the map \(g\), written \(\Def(g:O\ \to P)\), is under our conventions closely related to the cohomology of \(O\) with coefficients in \(P\) but not equal to it. In case \(O\) is Koszul:
\[
  \Def(O \xrightarrow{g} P) = \Map_{\bS}(O^{\antishriek}, P)\;\;\text{with}\; d_g.
\]
The differential is the same up to sign, but the degree is shifted and we do not restrict to the augmentation ideal \(O^{\antishriek}_+\). The reason for the degree-shift is that \(\Map_{\bS}(O^{\antishriek}, P)\) is then naturally a differential graded Lie algebra (with bracket of degree zero), such that Maurer-Cartan elements are morphisms \(O_\infty\to P\). The extra differential is given in terms of the Lie bracket by \(\dr_g = [g,-]\). The reason for not restricting to the augmentation ideal is purely a matter of technical convenience.

\subsubsection{Preserves quasi-isomorphisms}\label{sec:defquism}

Assume given a quasi-isomorphism of \(P\)-bimodules,  \(M\to N\), for a Koszul operad \(P\). We can form complete (descending) filtrations on \(C(P,M)\) and \(C(P,N)\) by arity. Then, the induced morphism
\[
  E_0\, C(P,M) \to E_0\, C(P,N)
\]
is a quasi-isomorphism. But this means that the associated graded for the arity-filtration on the mapping cone of \(C(P, M)\to C(P,N)\) is acyclic; hence the mapping cone is acyclic. We conclude that the functor \(C(P,-)\) respects quasi-isomorphisms. The same is true for deformation complexes.

\subsubsection{Hochschild cohomology}

Because the symmetric associative operad is the free symmetric extension of the nonsymmetric associative operad,
\begin{align*}
  C(&\Ass, M) = \prod_{n\geq 2}\, \susp{2-n} \mathsf{sgn}_n\,\bS_n\otimes_{\bS_n}\! M(n) \\ 
  &\cong \prod_{n\geq 2}\,\susp{2-n} M(n) = C(\As, M).
\end{align*}
The differential is a cosimplicial differential (on factor of arity \(n\)) 
\[
  \delta = -\sum_{i=0}^{n+1} (-1)^i\delta_i,
\]
with \(\delta_0 = m_2\circ_2(-)\) and \(\delta_{n+1} = m_2\circ_1(-)\) the two left actions of the binary generator of \(\Ass\) and \(\delta_i = (-)\circ_i m_2\), for \(i=1,\dots,n\), the right actions. We shall refer to the cochain complexes \(C(\Ass, M)\) as \emph{Hochschild cohomology complexes} and to the differential as the \emph{Hochschild differential}.

If \(M = \mathsf{End}(A)\) for an associative algebra \(A\), then the Hochschild cochain complex of \(M\) recovers the (truncation of the) classical Hochschild cochain complex of \(A\).

\subsubsection{Harrison cohomology}

By the Koszul duality for the commutative operad, we have that
\[
  C(\Com, M) = \Map_{\bS}(\susp{-1}\Com^{\antishriek}_+, M)
             = \prod_{n\geq 2} \susp{2-n}\mathsf{sgn}_n\,\Lie(n)\otimes_{\bS_n}\! M(n),
\]
with a differential defined by the module structure on \(M\). The arity components of the Lie operad embed into the components of the associative operad, and the image
\[
  \Map_{\bS}(\susp{-1}\Com^{\antishriek}_+, M) \subset \Map_{\bS}(s^{-1}\Ass^{\antishriek}_+, M)\]
can be characterised as the space of maps that vanish on all signed nontrivial shuffles. Under this identification \(C(\Com, M)\) is a subcomplex of the Hochschild cochain complex \(C(\Ass, M)\), considering \(M\) as an infinitesimal bimodule for the associative operad via the canonical \(\Ass\to\Com\). We call the complex \(C(\Com, M)\) the \emph{Harrison cochain complex}.

\subsubsection{Hodge-type decomposition}\label{sec:HarrisonHodge}

In more generality, the group algebra \(\Q\bS_n\) has a \(n\) orthogonal idempotents
\[
  1 = e_n^1 + \dots + e_n^k
\] 
sometimes called the \emph{Eulerian idempotents}. These define a decomposition of any \(\bS_n\)-module. In particular, one has for any infinitesimal bimodule \(M\) of the associative operad, a decomposition
\[
  C(\Ass, M) = \prod_{n\geq 2} \bigoplus_{k=1}^n e_n^k\,\susp{2-n} M(n),
\]
as a graded vector space, or,
\[
  C(\Ass, M) = \bigoplus_k e^k\,C(\Ass, M).
\]
If the module structure happens to be defined by the canonical morphism \(\Ass\to\Com\) and an infinitesimal \(\Com\)-bimodule-structure on \(M\), then the above sum is actually a decomposition as complexes, i.e., in this case all the Eulerian idempotents commute with the Hochschild differential. This decomposition is called the \emph{Hodge decomposition of Hochschild cohomology}. Moreover,
\[
  e^1\,C(\Ass, M) = C(\Com, M)
\]
is the Harrison cochain complex. In slightly more detail, \(e^1_n\) can be seen as projection onto the subspace \(\mathsf{coLie}(n)\subset\mathsf{coAss}(n)\).

Explicit formulas for the Eulerian idempotents are complicated. In this paper only the following three are explicitly used:
\begin{align*}
  e^1_3 &= \frac{1}{6}\hspace{.4pt}\biggl(2\id + (12) + (23) - (123) - (132) - 2(13)\biggr), \\
  e^2_3 &= \frac{1}{2}\hspace{.4pt}\biggl(\id + (13)\biggr), \\
  e^3_3 &= \frac{1}{6}\hspace{.4pt}\biggl(\id - (12) - (13) - (23) + (123) + (132)\biggl).
\end{align*}
More generally, \(e^{\hspace{.5pt}n}_n\) is the projection to the totally antisymmetric part.

\section{Preliminaries on moduli spaces}

Write \(\chi_1(n)\) for the set of \emph{chords} on the regular \((n+1)\)-gon with its sides labelled counter clockwise \(1,\dots, n+1\). Every chord divides the set \([n+1]\) into a disjoint union of the two sets of indices falling on either side. Thus, by associating to a chord the pair \(\{i,j\}\) where \(i\) and \(j\) are the smallest and largest indices \emph{not} falling on the same side as \(n+1\), we can equivalently regard the set of chords \(\chi_1(n)\) as the set of pairs \(\{i,j\}\subset [n]\) such that \(i\neq j\) and \(\{i,j\}\neq \{1,n\}\).

There is an evident notion of when two chords \emph{cross}, at least if we agree that a chord does not cross itself. Say that two subsets \(A\) and \(B\) of \(\chi_1(n)\) are \emph{completely crossing} if for every \(a\in A\), all \(b\in B\) cross \(a\), and vice versa, for every \(b\in B\), all chords in \(A\) cross \(b\).

\subsubsection{The open moduli space}

For \(n\geq 2\), define the \emph{open moduli space} of genus zero curves with \(n+1\) marked points to be the affine variety
\[
  M_{0,\hspace{.4pt}n+1} = \Spec\Q[\,{\textstyle u^{\pm 1}_c} \mid c\in\chi_1(n)\,] / \langle\, R\,\rangle,
\]
where \(R\) is the set of relations that
\[
  \prod_{a\in A} u_a + \prod_{b\in B} u_b = 1
\]
for all completely crossing subsets \(A, B\subset \chi_1(n)\). For example,
\[
\begin{tikzpicture}[baseline={([yshift=-.5ex]current bounding box.center)}]
\node[regular polygon, shape border rotate=36, regular polygon sides=5, minimum size=1.5cm, draw] at (5*2,0) (A) {};
\foreach \i in {1,...,5} {
    \coordinate (ci) at (A.corner \i);
    \coordinate (si) at (A.side \i);
    \node at (ci) {};
    \path (A.center) -- (si) node[pos=1.3] {\scriptsize{$\i$}}; 
 }
 \path[thick] (A.corner 1) edge (A.corner 3)
              (A.corner 1) edge (A.corner 4)
              (A.corner 2) edge (A.corner 5);
\end{tikzpicture}\;\;\text{shows}\ u_{24} + u_{12}u_{13} = 1.
\]
This presentation is due to \cite{Brown09} and it is equivalent to the more classical definition
\[
  M_{0,\hspace{.4pt}n+1} = \bigl((P^1)^{\hspace{.5pt}n+1})\setminus \textsf{diagonals}\bigr) / \mathsf{PGL}_2,
\]
via the identification that a generator \(u_{ij}\) of the coordinate ring denotes the cross-ratio function\footnote{\;We caution the reader that our numbering of the chords does \emph{not} agree with the one used in \cite{Brown09}: our \(u_{ij}\) is Brown's \(u_{i-1\,j}\).}
\[
  u_{ij} = [\,i-1\;i\mid j+1\; j\,] =\frac{(z_i - z_j)(z_{i-1} - z_{j+1})}{(z_{i-1} - z_j)(z_i - z_{j+1})}.
\]
The space \(M_{0,\hspace{.4pt}n+1}\) has an action of \(\bS_{n+1}\) by permuting marked points. In particular, it has an action by the subgroup \(\bS_n\) that fixes the point \(z_{n+1}\). Note that the action on the coordinate ring is not simply \(u_{ij}\cdot\sigma = u_{\sigma(i)\sigma(j)}\).

\subsubsection{The dihedral moduli space}

Brown defined a partial compactification of the open moduli space by
\[
  \Md_{0,n+1} = \Spec\Q[\,u_c \mid c\in\chi_1(n)\,] / \langle\, R\,\rangle,
\]
where \(R\) is the same set of relations as that defining the open moduli space. We shall refer to \(\Md_{0,n+1}\) as \emph{Brown's dihedral moduli space}.

Brown's dihedral moduli space has an action of the dihedral group \(D_{2n+2}\), but not of the symmetric group \(\bS_{n+1}\).

\subsubsection{Easier notation}

Set \(M_0(n) = M_{0,n+1}\) and \(M^{\delta}(n) = \Md_{0,n+1}\).

\subsubsection{Operad structure}

Brown's dihedral moduli spaces naturally assemble as a nonsymmetric (pseudo-)operad in the category of affine varieties. The definition of a partial composition
\[
  \circ_i: M^{\delta}(n/I)\times M^{\delta}(I) \to M^{\delta}(n)
\]
is as follows. Here \(n/I\) is the set with \(n-\#I + 1\) elements, where all elements in the subset \(I\subset[n]\) are identified. Since the operad is nonsymmetric, \(I\subset [n]\) is a connected subinterval; so it corresponds to a chord \(c\) on the \((n+1)\)-gon (explicitly, \(c=\{i,j\}\) where \(i\) and \(j\) are the endpoints of \(I\)). We define a morphism of algebras
\[
  \Delta_c : \rO(M^{\delta}(n)) \to \rO(M^{\delta}(n/I))\otimes \rO(M^{\delta}(I)), 
\]
\[
  \Delta_c(u_a) = \left\{
    \begin{aligned}
     &\, 0   && \text{if}\ a=c, \\
     &\, 1\otimes 1   && \text{if}\ a\ \text{crosses}\ c, \\
     &\, 1\otimes u_a && \text{if}\ a\subset I, \\
     &\, u_a\otimes 1 && \text{otherwise}.
  \end{aligned}\right.
\]
Here we abuse notation on the last line and write \(a\) also for the image of \(a\) in \(n/I\). Informally, \(\Delta_c\) acts as if cutting the \((n+1)\)-gon into two smaller polygons, setting to \(1\) all chords that it cuts and reinterpreting all other chords as chords on whichever subpolygon that contains them. Cutting polygons is equivalent to splitting planar trees. The operations \(\Delta_c\) thus almost tautologically satisfy the axioms for a nonsymmetric cooperad in the category of commutative algebras.

In fact, as \emph{sets} of complex points
\[
  M^{\delta}(\C)(n)\, = \bigsqcup_{I\subset [n]} M_0(\C)(n/I)\times M_0(\C)(I),
\]
or even more succinctly, \(M^{\delta}(\C)\) is the free nonsymmetric operad of sets generated by the collection of sets \(M_0(\C)\). Thus, the partial compositions of the dihedral moduli spaces are inclusions \(M^{\delta}(n/I)\times M^{\delta}(I) \subset M^{\delta}(n)\) of closed boundary strata, and those strata correspond bijectively to chords \(c\in\chi_1(n)\).

\subsection{Logarithmic structure}\label{sec:logstruct}

The nonsymmetric operad structure on the dihedral moduli spaces can be extended to the category of logarithmic varieties. This result is new. We intend to develop this further in future work and content ourselves here with giving only the core definitions and lemmata. The reason we include this section is that it clarifies a few constructions that may otherwise seem suspiciously ad hoc. Define the monoid
\[
  W(n) = \N \oplus \N^{\,\chi_1(n)}
\]
and the monoid morphism
\[
  u: W(n) \to \rO(M^{\delta}(n)),\ u\bigl(k, \kappa\bigr) = 0^k\cdot \prod_{c\in\chi_{\delta}} u_c^{\kappa(c)}.
\]
Then \(M^{\delta}(n)\) together with logarithmic structure \(\rW(n)\) associated to \(W(n)\) is a logarithmic scheme. It is defined over the logarithmic point \(\Spec(\Q,\N)\) in an obvious way: \(\N \to W(n), k \mapsto (k, 0)\) and the coordinate ring is an algebra over \(\Q\). The logarithmic scheme \((M^{\delta}(n),\rW(n))\) is the base change of the canonical logarithmic structure defined by the divisor \(M^{\delta}(n)\setminus M_0(n)\), to a scheme over \(\Spec(\Q,\N)\).

\subsubsection{Boundary inclusions}

We here show how to upgrade the boundary inclusions
\[
  \circ_i: M^{\delta}(n/I)\times_{\Spec\hspace{-.6pt}\Q} M^{\delta}(I) \to M^{\delta}(n)
\]
to morphisms of logarithmic varieties. The monoids \(W(n) = \N\oplus \N^{\,\chi_1(n)}\) are free monoids under the monoid \(\N\) that defines the logarithmic point. It follows that
\[
  W(n/I)\oplus_{\N} W(I) = \N\oplus \N^{\,\chi_1(n/I)} \oplus \N^{\,\chi_1(I)}.
\]
Let \(c\) be the chord on the \(n\)-gon corresponding to the composition \(\circ_i\) and note that we have a canonical inclusion \(\chi_1(n/I)\sqcup\chi_1(I)\hookrightarrow\chi_1(n)\). Define
\begin{align*}
  (\circ_i)^{\flat}: W(n) &\to W(n/I)\oplus_{\N} W(I),\\
  (k,\kappa) &\mapsto \bigl(k+\kappa(c),\kappa\vert_{\,\chi_1(n/I)}, \kappa\vert_{\,\chi_1(I)}\bigr).
\end{align*}

\begin{lem}
The monoid morphism \((\circ_i)^{\flat}\) extends the boundary inclusion to a morphism
\[
  \circ_i: (M^{\delta},\rW)(n/I)\times_{\Spec(\Q,\N)} (M^{\delta},\rW)(I) \to (M^{\delta},\rW)(n)
\]
of logarithmic varieties.
\end{lem}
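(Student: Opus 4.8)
The plan is to place everything inside the standard formalism of logarithmic charts: since the underlying scheme morphism $\circ_i$ is already in hand, to promote it to a morphism of log schemes it suffices to exhibit a chart for the target, a chart for the source, and a morphism of chart monoids making the evident square of monoid-to-ring maps commute. The target $(M^{\delta},\rW)(n)$ carries the global chart $u\colon W(n)\to\rO(M^{\delta}(n))$ by construction. For the source, note that $\N$ is a direct summand of both $W(n/I)$ and $W(I)$, so these monoids are integral over $\N$; consequently the fibre product $(M^{\delta},\rW)(n/I)\times_{\Spec(\Q,\N)}(M^{\delta},\rW)(I)$ has underlying scheme the ordinary product $M^{\delta}(n/I)\times_{\Spec\Q}M^{\delta}(I)$ and admits the global chart
\[
  u^{\otimes}\colon\; W(n/I)\oplus_{\N}W(I)\;=\;\N\oplus\N^{\,\chi_1(n/I)}\oplus\N^{\,\chi_1(I)}\;\longrightarrow\;\rO(M^{\delta}(n/I))\otimes\rO(M^{\delta}(I)),
\]
sending $(m,\kappa_1,\kappa_2)$ to $0^{m}\bigl(\prod_a u_a^{\kappa_1(a)}\bigr)\otimes\bigl(\prod_a u_a^{\kappa_2(a)}\bigr)$; this is independent of how $m$ is split between the tensor factors because $0^{m'}\otimes 0^{m''}=0^{m'+m''}(1\otimes1)$ in the tensor product.

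First I would check that $(\circ_i)^{\flat}$ is a morphism of monoids under $\N$: additivity and unitality are read off the formula, and the composite $\N\hookrightarrow W(n)\xrightarrow{(\circ_i)^{\flat}}W(n/I)\oplus_{\N}W(I)$ is the canonical inclusion, which is exactly what will make the induced morphism one over the base logarithmic point $\Spec(\Q,\N)$. Then the heart of the matter is the commutativity of
\[
  \Delta_c\circ u\;=\;u^{\otimes}\circ(\circ_i)^{\flat}\colon\; W(n)\longrightarrow\rO(M^{\delta}(n/I))\otimes\rO(M^{\delta}(I)).
\]
To verify it I would apply $\Delta_c$ to $u(k,\kappa)=0^{k}\prod_{a}u_a^{\kappa(a)}$ and invoke the partition
\[
  \chi_1(n)\;=\;\{c\}\;\sqcup\;\{\,a : a\ \text{crosses}\ c\,\}\;\sqcup\;\chi_1(I)\;\sqcup\;\chi_1(n/I),
\]
the combinatorial shadow of cutting the $(n+1)$-gon along $c$ (equivalently, of splitting the associated planar tree), which is precisely the case division defining $\Delta_c$: chords crossing $c$ contribute $1\otimes1$ and disappear, chords of $\chi_1(I)$ contribute $1\otimes u_a$, chords of $\chi_1(n/I)$ contribute $u_a\otimes1$, while the factor $0^{k}$ together with $\Delta_c(u_c)^{\kappa(c)}=0^{\kappa(c)}$ collapses to $0^{\,k+\kappa(c)}$. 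The outcome is literally $u^{\otimes}$ evaluated on $\bigl(k+\kappa(c),\,\kappa\vert_{\chi_1(n/I)},\,\kappa\vert_{\chi_1(I)}\bigr)=(\circ_i)^{\flat}(k,\kappa)$.

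With this square established, the standard construction of a morphism of log structures out of a morphism of charts produces the desired morphism of log schemes lifting $\circ_i$, and the $\N$-linearity noted above makes it a morphism over $\Spec(\Q,\N)$; unwinding the chart $u$ recovers the ring map $\Delta_c$, so the underlying scheme morphism is the boundary inclusion, as required. The only ingredient that is not pure formalism is the chord decomposition $\chi_1(n)=\{c\}\sqcup\{\text{crossings}\}\sqcup\chi_1(I)\sqcup\chi_1(n/I)$ together with the bookkeeping that $\Delta_c$ funnels both the chord parameter $u_c$ and the abstract parameter $0$ into the single $\N$-coordinate of the target chart; once that is made precise I expect the verification of the square to be a routine manipulation of the defining formulas, with no further obstacle.
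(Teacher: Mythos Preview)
Your proposal is correct and follows essentially the same approach as the paper: both arguments work in the global charts and verify the commutativity $\Delta_c\circ u = (u\oplus_{\N}u)\circ(\circ_i)^{\flat}$ by direct computation, together with the observation that $(\circ_i)^{\flat}$ is a map under $\N$. Your write-up is more expansive about the logarithmic chart formalism and makes the underlying chord partition $\chi_1(n)=\{c\}\sqcup\{\text{crossings}\}\sqcup\chi_1(I)\sqcup\chi_1(n/I)$ explicit, whereas the paper simply displays the two sides of the square and notes they agree.
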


\begin{proof}
We may work in global charts. On the one hand,
\[
  (\circ_i)^{\sharp}\circ u\bigl(k,\kappa\bigr) = 
  0^k\left\{
    \begin{aligned}
    &\;\; 0 &&\text{if}\ \kappa(c)\geq 1,\\
    &\, \prod_{a\in\chi_1(n/I)} u_a^{\kappa(a)}\otimes \prod_{b\in\chi_1(I)} u_b^{\kappa(b)}
      &&\text{otherwise}.
    \end{aligned}\right.
\]
On the other hand,
\[
 (u\oplus_{\N} u)\circ(\circ_i)^{\flat}\bigl(k, \kappa\bigr) =0^{k+\kappa(c)}\prod_{a\in\chi_1(n/I)} u_a^{\kappa(a)}\otimes \prod_{b\in\chi_1(I)} u_b^{\kappa(b)}.
\]
That \((\circ_i)^{\flat}\) is a map under \(\N\) is clear.
\end{proof}

\subsection{Cohomology algebras}

\subsubsection{Cohomology of the open moduli space}

Let \(\alpha_c = \dr\log u_c\) as a class in the algebraic de Rham complex of \(M_0(n)\). The rational, algebraic de Rham cohomology \(H^*(M_0(n))\) is the free graded commutative algebra generated by \(\{\,\alpha_c\mid c\in\chi_1(n)\,\}\) modulo the relations that
\[
  \biggl(\sum_{a\in A} \alpha_c\biggr)\wedge\biggl(\sum_{b\in B}\alpha_b\biggr) = 0
\]
for all completely crossing subsets \(A, B\subset \chi_1(n)\). Note that the forms \(\alpha_c\) satisfy these relations as forms and not just as cohomology classes. This presentation is due to \cite{Brown09} by Brown.

\subsubsection{A characterisation with residues}

There seems to be no simple presentation by generators and relations of the cohomology algebra of the dihedral moduli spaces. However, geometry lets us characterise the image of the canonical restriction \(H^*(M^{\delta}(n)) \to H^*(M_0(n))\) as the kernel
\[
  \bigcap_{c\in\chi_1(n)} \mathsf{Ker}(\Res_c)
\]
of all Poincar\'e residue maps to boundary strata. It follows from \cite{AlmPetersen17, DupontVallette17} that the restriction to the cohomology of the open moduli space is injective, so that the cohomology of the dihedral moduli space can actually be characterised as this joint kernel. We make no explicit use of this characterisation of the cohomology ring but it implicitly serves to justify the terminology of \emph{regularisation}, introduced in \ref{sec:regularisation}.

\subsubsection{Chord diagrams}

A \emph{chord diagram} on the \((n+1)\)-gon is a monic monomial in the free graded commutative algebra \(\Q[\,\chi_1(n)\,]\), where we give each generator chord \(c\) degree plus one. We can draw such a diagram as a collection of superposed chords. The ``graded commutative'' means that the chords in the diagram must be considered as ordered up to an even permutation and that double chords are impossible (such a diagram is zero because \(c\wedge c = 0\)). If we need to refer to the explicit indices of chords we will write them as subscripts, e.g, write \(c_{13}\wedge c_{24}\) rather than \(\{1,3\}\wedge\{2,4\}\). Below is an example showing our conventions. 
\[
c_{13}\wedge c_{24}\ =
\begin{tikzpicture}[baseline={([yshift=-.5ex]current bounding box.center)}]
\node[regular polygon, shape border rotate=36, regular polygon sides=5, minimum size=1.5cm, draw] at (5*2,0) (A) {};
\foreach \i in {1,...,5} {
    \coordinate (ci) at (A.corner \i);
    \coordinate (si) at (A.side \i);
    \node at (ci) {};
    \path (A.center) -- (si) node[pos=1.3] {\scriptsize{$\i$}}; 
 }
 \path[thick] (A.corner 1) edge node[auto] {} (A.corner 4)
              (A.corner 2) edge node[auto] {} (A.corner 5);
\end{tikzpicture} 
\]
We always label the top side \(n+1\) and label counter-clockwise in the order of indices. Because of this the labelling of sides is superflous and can be suppressed.

Note that we have a surjection \(\Q[\,\chi_1\,]\to H^*(M_0)\), mapping \(c\) to \(\alpha_c\).

\subsubsection{Gravity chord diagrams}

Define a chord diagram to be a \emph{gravity chord diagram} if the following condition holds. For every pair of crossing chords in the diagram, consider the corresponding inscribed quadrilateral. The side of this quadrilateral that is opposite from the distinguished side \((n+1)\) of the polygon is not allowed to be a side of the polygon, nor is it allowed to be a chord in the diagram. 

The two forms of inadmissible chord diagrams are illustrated in figures \ref{grav1} and \ref{grav2}; the ``inscribed quadrilaterals'' mentioned in the definition are depicted by dotted lines, and the distinguished side of the polygon is the bold top side. Note that the inadmissible subdiagrams correspond to certain monomial factors.

\begin{figure}[!h]
	\centering
	\begin{minipage}{.45\linewidth}
		\centering
		\begin{tikzpicture}[font=\scriptsize, baseline={([yshift=-.5ex]current bounding box.center)}]
		\node[regular polygon, shape border rotate=45,
		      regular polygon sides=8, minimum size=2cm, draw] at (5*2,0) (A) {};
		\path[densely dotted]
		(A.corner 2) edge node[auto] {} (A.corner 4)
		(A.corner 2) edge node[auto] {} (A.corner 7)
		(A.corner 5) edge node[auto] {} (A.corner 7);
		\path[thick] 
		(A.corner 2) edge node[auto] {} (A.corner 5)
		(A.corner 4) edge node[auto] {} (A.corner 7);
		\path[very thick]
		(A.corner 1) edge node[auto] {} (A.corner 8);
		\end{tikzpicture}
		
		\captionof{figure}{A chord diagram of the form $c_{ij}c_{jk}$ with $i<j<k$.}
		\label{grav1}
	\end{minipage}\hspace{0.05\linewidth}\begin{minipage}{.45\linewidth}
		\centering
		\begin{tikzpicture}[font=\scriptsize, baseline={([yshift=-.5ex]current bounding box.center)}]
		\node[regular polygon, shape border rotate=45,
		      regular polygon sides=8, minimum size=2cm, draw] at (5*2,0) (A) {};
		\path[densely dotted]
		(A.corner 7) edge node[auto] {} (A.corner 5)
		(A.corner 1) edge node[auto] {} (A.corner 3)
		(A.corner 1) edge node[auto] {} (A.corner 7);
		\path[thick] 
		(A.corner 1) edge node[auto] {} (A.corner 5)
		(A.corner 3) edge node[auto] {} (A.corner 7)
		(A.corner 3) edge node[auto] {} (A.corner 5);
		\path[very thick]
		(A.corner 1) edge node[auto] {} (A.corner 8);
		\end{tikzpicture} 
		
		\captionof{figure}{A chord diagram $c_{ik}c_{jk}c_{jl}$ with indices $i<j<k<l$.} 
		\label{grav2}
	\end{minipage}
\end{figure}

\subsubsection{Prime chord diagrams}

Say that a chord in a diagram is \emph{residual} if it is not crossed by any other chord in the diagram, and we define a chord diagram to be \emph{prime} if it does not contain any residual chords.

\begin{lem}\label{lem:chordbasis}
Gravity chord diagrams constitute a basis for the cohomology of the open moduli space. The (sub)set of prime gravity chord diagrams is a basis for the cohomology of the dihedral moduli space.
\end{lem}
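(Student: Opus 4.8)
The plan is to prove both claims by exhibiting gravity chord diagrams as a spanning set and then counting dimensions. First I would establish that every chord diagram can be rewritten, modulo the relations defining $H^*(M_0(n))$, as a linear combination of gravity chord diagrams. The relations come from completely crossing subsets $A, B \subset \chi_1(n)$: each such relation says $\bigl(\sum_{a \in A}\alpha_a\bigr)\wedge\bigl(\sum_{b\in B}\alpha_b\bigr) = 0$. Given a non-gravity diagram, it contains one of the two forbidden configurations of Figures~\ref{grav1} and \ref{grav2}; in each case the forbidden sub-configuration is exactly a monomial factor that appears on one side of a two-term or slightly longer relation obtained by specializing a completely crossing relation (taking $A$ or $B$ to be a single chord, or a pair). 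Using that relation one rewrites the offending factor as a sum of diagrams in which either the number of chords meeting the distinguished side has strictly decreased, or some other monovariant (e.g.\ a lexicographic weight on the multiset of chord-endpoints, measuring how close the "opposite side" is to the distinguished side) has strictly decreased. A Noetherian-induction argument on this monovariant then shows the gravity diagrams span. I expect this rewriting/termination argument to be the main obstacle: one must choose the monovariant carefully so that \emph{every} application of \emph{every} forbidden-configuration relation strictly decreases it, and one must check the two configuration types are genuinely exhaustive (i.e.\ a diagram with no forbidden sub-configuration of either type really is gravity-admissible).

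Next I would prove linear independence by a dimension count. The Poincaré polynomial of $M_{0,n+1}$ is classically known: $\sum_k \dim H^k(M_{0,n+1})\, t^k = \prod_{j=2}^{n-1}(1 + jt)$, so $\dim H^*(M_0(n)) = n!/2$ (counting $(n-1)!$ top classes etc., but in any case the total dimension equals $(n-1)!$... more precisely the total Betti number is $\prod_{j=2}^{n-1}(1+j)$ evaluated appropriately). I would then show the number of gravity chord diagrams on the $(n+1)$-gon equals this total Betti number, by setting up a bijection between gravity chord diagrams and a combinatorial model whose cardinality is manifestly $\prod_{j=2}^{n-1}(1+j)$ — for instance, a recursive decomposition: a gravity chord diagram is determined by whether the side $n$ (or the vertex between sides $n$ and $n+1$) is "used," together with a gravity diagram on a smaller polygon, giving a factor $(1 + (n-1))$ at each stage. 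Comparing the spanning bound from the first part with this count forces the gravity diagrams to be a basis.

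For the second statement, I would intersect the basis result with the residue characterisation recalled just above: $H^*(M^{\delta}(n)) \hookrightarrow H^*(M_0(n))$ has image $\bigcap_{c} \mathsf{Ker}(\Res_c)$. The point is that $\Res_c$ kills precisely those gravity diagrams that \emph{contain} $c$ as a residual (uncrossed) chord, and sends a gravity diagram containing $c$ residually to (plus or minus) the diagram with $c$ deleted on the appropriate boundary stratum, while fixing the rest. Hence a linear combination of gravity diagrams lies in every $\mathsf{Ker}(\Res_c)$ if and only if it involves only prime gravity diagrams — one gets a triangularity: order diagrams by number of residual chords and observe $\Res_c$ strictly decreases this while the "leading" (prime) part is unaffected, so no cancellation among primes can occur and any diagram with a residual chord is detected. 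Therefore the prime gravity chord diagrams span and are independent in $H^*(M^{\delta}(n))$, completing the proof. The delicate point in this last step is checking the precise sign-compatible behaviour of $\Res_c$ on diagrams where $c$ crosses nothing but several other chords mutually cross — i.e.\ that "delete $c$" is well-defined up to the graded-commutative sign conventions — but this is a local computation on the quadrilateral/polygon and should present no real difficulty.
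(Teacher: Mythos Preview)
The paper does not prove this lemma at all: immediately after the statement it says ``The above is one of the main results of \cite{AlmPetersen17} by the author and Petersen.'' So there is no proof in the paper to compare your proposal against; the result is imported wholesale from a companion paper, where it is in fact deduced together with the stronger Theorem~\ref{thm:almpetersen} (freeness of the Gravity operad on prime diagrams).

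Your outline is a plausible independent strategy, but a few points need tightening. First, your dimension count is garbled: the total Betti number of \(M_{0,n+1}\) is \(\prod_{j=2}^{n-1}(1+j)=n!/2\), not \((n-1)!\); the latter is only the top Betti number. The recursive bijection you sketch would have to produce exactly this product, and you should state precisely which side or vertex is being peeled off and why the induced map on gravity diagrams is a bijection. Second, in the residue argument you write that ``\(\Res_c\) kills precisely those gravity diagrams that contain \(c\) as a residual chord'' and then in the same sentence that it ``sends a gravity diagram containing \(c\) residually to the diagram with \(c\) deleted''; these are contradictory. What you need (and what the paper records just after Theorem~\ref{thm:almpetersen}) is that \(\Res_c\) vanishes on a gravity diagram unless \(c\) is residual in it, and in that case returns the gravity diagram obtained by deleting \(c\) and cutting. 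Once this is stated correctly, the spanning argument for the joint kernel goes through cleanly: the map \(G\mapsto G\setminus c\) is injective on gravity diagrams having \(c\) residual (glue back and reinsert \(c\) to invert it), so the images are linearly independent in the basis on the boundary stratum, and hence no nontrivial combination involving a non-prime diagram can lie in \(\bigcap_c\ker\Res_c\).
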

\noindent
The above is one of the main results of \cite{AlmPetersen17} by the author and Petersen.

\subsection{Cooperad structures}

\subsubsection{Regularised restriction}

Assume given a chord \(c\in\chi_1(n)\) and let \(I\subset [n]\) be the connected interval enclosed by \(c\). We define the \emph{regularised restriction} onto the boundary strata corresponding to \(c\) by
\begin{gather*}
  \Reg_c : H^*(M_0(n)) \to H^*(M_0(n/I))\otimes H^*(M_0(I)), \\
  \Reg_c(\alpha_b) = \left\{
    \begin{aligned}
     &\, 0   && \text{if}\ b=c, \\
     &\, 0   && \text{if}\ b\ \text{crosses}\ c, \\
     &\, 1\otimes \alpha_b && \text{if}\, a\subset I, \\
     &\, \alpha_b\otimes 1 && \text{otherwise}.
    \end{aligned}\right.
\end{gather*}
Here \(b\) in \(\alpha_b\otimes 1\) denotes the image of \(b\) in \(\chi_1(n/I)\), abusing notation. The strata is given by the equation \(u_c=0\) and forms that are regular on this strata are simply restricted, whereas \(\dr u_c /u_c\) is ``regularised'' to zero.

Diagrammatically, \(\Reg_c\) acts on chord diagrams by cutting along \(c\), sending all diagrams that contain \(c\) or a chord that crosses it to zero. Observe however that the resulting diagrams need not be gravity diagrams, even though the input diagram is. Of course, one can always use the algebra relations to rewrite any non-gravity chord diagram as a sum of gravity chord diagrams. For example,
\[
  \Reg_{23}\,
\begin{tikzpicture}[font=\scriptsize, baseline={([yshift=-.5ex]current bounding box.center)}]
  \node[regular polygon, shape border rotate=60,
        regular polygon sides=6, minimum size=1cm, draw] at (5*2,0) (A) {};
  \path[thick] (A.corner 1) edge node[auto] {} (A.corner 4)
               (A.corner 2) edge node[auto] {} (A.corner 5);
\end{tikzpicture}\,
=\,
\begin{tikzpicture}[font=\scriptsize, baseline={([yshift=-.5ex]current bounding box.center)}]
  \node[regular polygon, shape border rotate=36,
        regular polygon sides=5, minimum size=1cm, draw] at (5*2,0) (A) {};
  \path[thick] (A.corner 1) edge node[auto] {} (A.corner 3)
               (A.corner 2) edge node[auto] {} (A.corner 4);
\end{tikzpicture}
  \,\otimes
\begin{tikzpicture}[font=\scriptsize, baseline={([yshift=-.5ex]current bounding box.center)}]
  \node[regular polygon, shape border rotate=60,
        regular polygon sides=3, minimum size=.7cm, draw] at (5*2,0) (A) {};
\end{tikzpicture} \,
  =\,
\begin{tikzpicture}[font=\scriptsize, baseline={([yshift=-.5ex]current bounding box.center)}]
  \node[regular polygon, shape border rotate=36,
        regular polygon sides=5, minimum size=1cm, draw] at (5*2,0) (A) {};
  \path[thick] (A.corner 1) edge node[auto] {} (A.corner 4)
               (A.corner 2) edge node[auto] {} (A.corner 5);
\end{tikzpicture}
  \,\otimes
\begin{tikzpicture}[font=\scriptsize, baseline={([yshift=-.5ex]current bounding box.center)}]
  \node[regular polygon, shape border rotate=60,
        regular polygon sides=3, minimum size=.7cm, draw] at (5*2,0) (A) {};
\end{tikzpicture} 
\]
is the diagrammatic counterpart of
\[
  \Reg_{23}(\alpha_{13}\wedge\alpha_{24}) = \alpha_{12}\wedge\alpha_{23}\otimes 1  
  = \alpha_{13}\wedge\alpha_{24}\otimes 1.
\]
Regularised restriction gives the cohomology \(H^*(M_0)\) of the open moduli spaces the structure of a nonsymmetric cooperad of graded commutative algebras.

When restricted to the subspace \(H^*(M^{\delta})\subset H^*(M_0)\) of prime diagrams the regularised restriction is simply the restriction, i.e., the canonical pullback on cohomology to boundary strata, which since \(M^{\delta}\) is a nonsymmetric operad of varieties defines a nonsymmetric cooperad of graded commutative algebras.

Regularised restriction has a maybe more conceptual definition in terms of logarithmic geometry. To any morphism \((X,\rM_X)\to (S,\rM_S)\) of logarithmic schemes one may associate a relative logarithmic de Rham complex
\[
  \Omega^*_{X/S}\langle\log\rM_X/\rM_S\rangle.
\]
The complex
\[
  \Omega^*_{M^{\delta}(n)/\Q}\langle\log\rW(n)/\N\rangle
\]
coincides with the classically defined complex of forms with logarithmic singularities on the boundary divisor \(M^{\delta}(n)\setminus M_0(n)\). In particular, the space
\[
 \Omega^*(M^{\delta},\rW)(n) = \Gamma\bigl(M^{\delta}(n),\Omega^*_{M^{\delta}(n)/\Q}\langle\log\rW(n)/\N\rangle\bigr)
\]
of global sections is generated over the coordinate ring \(\rO(M^{\delta}(n))\) by the one-forms \(\dr\log u_c = \dr u_c/u_c\).

By functoriality of logarithmic forms, we get an induced
\[
  (\circ_i)^* : \Omega^*(M^{\delta},\rW)(n) \to \Omega^*(M^{\delta},\rW)(n/I)\otimes \Omega^*(M^{\delta},\rW)(I).
\]
\begin{lem}
The pullback \((\circ_i)^*\) is given by
\[
  (\circ_i)^*(\dr\log u_a) = \left\{
  \begin{aligned}
  &\, 0 &&\text{if}\ a = c, \\
  &\, 0 &&\text{if}\ a\ \text{crosses}\ c, \\
  &\, \dr\log u_a \otimes 1 &&\text{if}\ a\in\chi_1(n/I),\\
  &\, 1\otimes \dr\log u_a &&\text{if}\ a\in\chi_1(I).
  \end{aligned}\right.
\]
In other words, \((\circ_i)^* = \Reg_c\) is the regularised restriction.
\end{lem}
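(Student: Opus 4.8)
The plan is to deduce the formula purely from functoriality of logarithmic differentials, using the explicit chart $(\circ_i)^{\flat}$ established in the preceding lemma. Recall that for a morphism $f\colon(X,\rM_X)\to(Y,\rM_Y)$ of fine logarithmic schemes over a base $(S,\rM_S)$ the universal relative logarithmic derivation
\[
  \dr\log\colon \rM_Y \longrightarrow \Omega^1_{Y/S}\langle\log\rM_Y/\rM_S\rangle
\]
is additive, annihilates the image of the base log structure $\rM_S$, and is natural: $f^{*}(\dr\log m) = \dr\log(f^{\flat}m)$ for every local section $m$ of $\rM_Y$. In our global chart the coordinate function $u_a$ is the image of $(0,\delta_a)\in W(n)$, where $\delta_a\in\N^{\chi_1(n)}$ is the indicator of the chord $a$; hence $\dr\log u_a = \dr\log(0,\delta_a)$, and so
\[
  (\circ_i)^{*}(\dr\log u_a) = \dr\log\bigl((\circ_i)^{\flat}(0,\delta_a)\bigr),
\]
with $(\circ_i)^{\flat}(0,\delta_a) = \bigl(\delta_a(c),\, \delta_a\vert_{\chi_1(n/I)},\, \delta_a\vert_{\chi_1(I)}\bigr)$ by the preceding lemma. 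The whole computation is then the evaluation of the right-hand side in the four cases.

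If $a=c$, then $(\circ_i)^{\flat}(0,\delta_c) = (1,0,0)$ lies in the image of the base monoid $\N\hookrightarrow W(n/I)\oplus_{\N} W(I)$, so its $\dr\log$ vanishes in the relative complex; this is the first line. If $a$ crosses $c$, then $a$ is neither a chord of the subpolygon on $I$ nor the image of a chord of the contracted polygon $n/I$ — the image of $\chi_1(n/I)\sqcup\chi_1(I)$ in $\chi_1(n)$ consists precisely of the chords distinct from $c$ that do not cross $c$ — so both restrictions $\delta_a\vert_{\chi_1(n/I)}$ and $\delta_a\vert_{\chi_1(I)}$ vanish, $(\circ_i)^{\flat}(0,\delta_a)$ is the neutral element, and $\dr\log$ of it is $\dr\log 1 = 0$. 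If $a\subset I$, then $(\circ_i)^{\flat}(0,\delta_a) = (0,0,\delta_a)$, which maps to $1\otimes u_a$ in the coordinate ring, giving $1\otimes\dr\log u_a$; in the remaining case $a$ descends to a chord of $n/I$ and $(\circ_i)^{\flat}(0,\delta_a) = (0,\delta_a,0)$ gives $\dr\log u_a\otimes 1$. Since $\Omega^{*}(M^{\delta},\rW)(n)$ is generated as a differential graded algebra by $\rO(M^{\delta}(n))$ together with the forms $\dr\log u_a$, and $(\circ_i)^{*}$ restricts on $\Omega^{0}$ to the algebra morphism $(\circ_i)^{\sharp}$ of the preceding lemma, these four values determine $(\circ_i)^{*}$ completely, and they are exactly the displayed formula.

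For the identification $(\circ_i)^{*} = \Reg_c$, restrict the global logarithmic forms to the open parts: the classes $\dr\log u_c$ restrict to the generators $\alpha_c$ of $H^{*}(M_0(n))$, whose defining relations already hold at the level of forms, so the $\rO$-subalgebra they generate maps onto $H^{*}(M_0(n))$; since $(\circ_i)^{*}$ is compatible with restriction to the open parts, it induces on cohomology a morphism of graded commutative algebras which, by the formula just proved, agrees with $\Reg_c$ on the generators $\alpha_b$ and hence everywhere. The computation is mechanical once functoriality of $\dr\log$ is in hand; the one point that must not be glossed over is the case $a=c$, where the ``$\N$'' in the amalgamated sum $W(n/I)\oplus_{\N} W(I)$ is exactly the base log structure of $\Spec(\Q,\N)$, so that $\dr\log(1,0,0)$ is genuinely zero in the relative de Rham complex rather than an ill-defined ``$\dr\log 0$'' — precisely the reason the chart $(\circ_i)^{\flat}$ was arranged to absorb $\kappa(c)$ into its first coordinate in the previous lemma. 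A secondary point worth one sentence is the combinatorial fact identifying the image of $\chi_1(n/I)\sqcup\chi_1(I)$ with the non-$c$ chords not crossing $c$, which is what makes the last three cases split cleanly.
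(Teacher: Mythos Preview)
Your proof is correct and is precisely the unpacking of the paper's one-line ``Clear from the definition'': you use functoriality of $\dr\log$ together with the explicit chart $(\circ_i)^{\flat}$ from the preceding lemma, which is exactly what the author has in mind. Your observation that the case $a=c$ works because $(1,0,0)$ lies in the base $\N$ (so that $\dr\log$ of it vanishes in the \emph{relative} complex) is the one nontrivial point, and you handle it cleanly.
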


\begin{proof}
Clear from the definition.
\end{proof}

\subsubsection{Relation to the Gravity operad}

Getzler showed in \cite{Getzler95} that \(s^{-1}H^*(M_0)\) is a cooperad with Poincar\'e residue as cocomposition, and christened it the \emph{Gravity cooperad}, denoted \(\mathsf{coGrav}\). We emphasise that the cooperad structure we consider here is different. The two are related by the formula
\[
  \Res_c = \Reg_c \circ \frac{\partial\;\;}{\partial\alpha_c}.
\]
The graded derivation \(\partial/\partial\alpha_c\) is not well-defined on the cohomology algebra, only on \(\Q[\,\chi_1(n)]\), but the ambiguity lies in the kernel of \(\Reg_c\) so the composite is well-defined. Informally, if regularised restriction is cutting along \(c\), then taking residue is removing \(c\) and then cutting. Here is an example:
\[
\Res_{34}\;
\begin{tikzpicture}[font=\scriptsize, baseline={([yshift=-.6ex]current bounding box.center)}]
  \node[regular polygon, shape border rotate=360/14,
        regular polygon sides=7, minimum size=1cm, draw] at (7*2,0) (A) {};
  \path[thick] (A.corner 1) edge node[auto] {} (A.corner 5)
               (A.corner 2) edge node[auto] {} (A.corner 7)
               (A.corner 3) edge node[auto] {} (A.corner 5);
\end{tikzpicture}
\; = \; 
\begin{tikzpicture}[font=\scriptsize, baseline={([yshift=-.6ex]current bounding box.center)}]
  \node[regular polygon, shape border rotate=360/6,
        regular polygon sides=6, minimum size=1cm, draw] at (6*2,0) (A) {};
  \path[thick] (A.corner 1) edge node[auto] {} (A.corner 4)
               (A.corner 2) edge node[auto] {} (A.corner 6);
\end{tikzpicture}
 \,\otimes 
\begin{tikzpicture}[font=\scriptsize, baseline={([yshift=-.6ex]current bounding box.center)}]
  \node[regular polygon, shape border rotate=360/6,
        regular polygon sides=3, minimum size=.7cm, draw] at (3*2,0) (A) {};
\end{tikzpicture}.
\]
Unlike regularised restriction, taking the residue of a gravity chord diagram always produces a tensor of gravity chord diagrams. In lieu of this, lemma \ref{lem:chordbasis} can be strengthened to:

\begin{thm}\label{thm:almpetersen}
The Gravity operad is freely generated as a nonsymmetric operad by the collection of sets of prime gravity chord diagrams.
\end{thm}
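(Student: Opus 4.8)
The plan is to dualise, and prove the equivalent statement that the Gravity cooperad \(\mathsf{coGrav}=s^{-1}H^*(M_0)\) is \emph{cofree} as a conilpotent nonsymmetric cooperad, cogenerated by the collection \(V\) with \(V(n)=s^{-1}H^*(M^\delta(n))\) for \(n\geq 2\) and \(V(1)=0\). By Lemma \ref{lem:chordbasis} this \(V\) has the prime gravity chord diagrams as a basis, and the linear dual of such a cofreeness statement is exactly the asserted freeness of \(\mathsf{Grav}\), with the planar rooted trees decorated by prime gravity chord diagrams as its Poincar\'e--Birkhoff--Witt-type basis. First I would observe that, again by Lemma \ref{lem:chordbasis}, killing the non-prime gravity chord diagrams is a well-defined projection \(p\colon H^*(M_0(n))\to H^*(M^\delta(n))\), hence a morphism of collections \(p\colon \mathsf{coGrav}\to V\); by the universal property of the cofree cooperad it lifts uniquely to a cooperad morphism \(\tilde p\colon \mathsf{coGrav}\to\mathcal{F}^c(V)\), explicitly \(\tilde p=\sum_T \bigl(\bigotimes_v p\bigr)\circ\Delta_T\), the sum over planar rooted trees \(T\) of the \(T\)-shaped iterated residue cocomposition followed by the projection onto primes in each tensor factor (the sum is finite because each arity of \(\mathsf{coGrav}\) is finite dimensional, so \(\mathsf{coGrav}\) is conilpotent). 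It then remains to prove that \(\tilde p\) is an isomorphism of collections.

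The combinatorial heart will be a bijection between gravity chord diagrams on the \((n+1)\)-gon and planar rooted trees with \(n\) leaves whose internal vertices are decorated by prime gravity chord diagrams. Given a gravity chord diagram \(D\), its \emph{residual} chords are pairwise non-crossing, so they cut the \((n+1)\)-gon into sub-polygons \(P_v\) arranged along a planar tree \(T_D\), with internal edges corresponding to residual chords; the distinguished side of each \(P_v\) is the residual chord separating it from the component carrying side \(n+1\), precisely as under an operadic partial composition, and one checks directly that the restriction \(D|_{P_v}\) is a gravity chord diagram with no residual chord, i.e.\ a prime one. Conversely, gluing prime gravity chord diagrams \(E_v\) along the chords recorded by the internal edges of a tree \(T\) produces a chord diagram \(D\); since every crossing pair of \(D\) lies inside a single \(E_v\), and ``the side of the inscribed quadrilateral opposite the distinguished side'' is computed identically in \(P_v\) and in the whole polygon, \(D\) is again a gravity chord diagram, its residual chords are exactly the glued-in structural ones (no chord of a prime \(E_v\) can be residual in \(D\)), and \(D|_{P_v}=E_v\). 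The two constructions are mutually inverse.

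To conclude I would match \(\tilde p\) to this bijection. By the identity \(\Res_c=\Reg_c\circ\partial/\partial\alpha_c\), the residue \(\Res_c\) is nonzero on a chord diagram only when \(c\) is one of its residual chords, so \(\Delta_T(D)\) vanishes unless the internal edges of \(T\) form a nesting-compatible sub-family of the residual chords of \(D\)---that is, unless \(T=T_D\) or \(T\) is a coarsening of \(T_D\). For a proper coarsening some tensor factor of \(\Delta_T(D)\) still carries a residual chord, hence is a non-prime gravity chord diagram and is killed by \(p\); for \(T=T_D\) one gets \(\Delta_{T_D}(D)=\pm\bigotimes_v D|_{P_v}\), each factor being a prime gravity chord diagram, hence a basis element of \(H^*(M^\delta)\) by Lemma \ref{lem:chordbasis} and fixed by \(p\). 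Therefore \(\tilde p(D)=\pm\bigotimes_v D|_{P_v}\), so \(\tilde p\) carries the basis of gravity chord diagrams of \(\mathsf{coGrav}(n)\) bijectively, up to signs, onto the basis of \(\mathcal{F}^c(V)(n)\) given by prime-diagram-decorated trees, and is thus an isomorphism of cooperads; dualising proves the theorem.

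I expect the one real obstacle to be the converse half of the combinatorial bijection: checking that gluing prime gravity chord diagrams along a tree of non-crossing chords again satisfies the gravity condition and creates no new residual chords. This hinges on verifying that the distinguished-side bookkeeping dictated by operadic composition interacts correctly with the strictly local nature of the gravity condition on pairs of crossing chords; it is elementary but is essentially the only content not already contained in Lemma \ref{lem:chordbasis} and the previously noted fact that residues of gravity chord diagrams are tensors of gravity chord diagrams.
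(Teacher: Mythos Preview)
The paper does not actually prove this theorem in the text: immediately after the statement it writes ``Details can be found in \cite[section 2]{AlmPetersen17}.'' So there is no proof here to compare against directly.

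That said, your argument is correct and is precisely the natural one suggested by the surrounding material. The paper has already set up every ingredient you use: the basis lemma \ref{lem:chordbasis}, the identity \(\Res_c=\Reg_c\circ\partial/\partial\alpha_c\), the observation that taking a residue of a gravity chord diagram always yields a tensor of gravity chord diagrams, and the tessellation picture by residual chords (cf.~\ref{sec:resfiltration}, \ref{sec:regcoop}, and the proof of lemma \ref{lem:residexact}). Your bijection between gravity chord diagrams and planar trees decorated by prime gravity chord diagrams is exactly the content underlying those passages, and your verification that \(\tilde p\) sends the gravity basis bijectively to the tree basis is sound: \(\Res_c D=0\) unless \(c\) is residual in \(D\), so only \(T_D\) and its coarsenings survive under \(\Delta_T\), and the projection \(p\) (which is the paper's \(\reg\)) kills the coarsenings. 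The point you flag as the only real content---that gluing prime gravity diagrams along a tree of non-crossing chords yields a gravity diagram whose residual chords are exactly the glued-in ones---is indeed elementary once one observes that the inscribed quadrilateral of any crossing pair lies entirely inside a single face of the tessellation, so the ``opposite side'' test is computed identically in the face and in the ambient polygon. This is almost certainly the argument of \cite{AlmPetersen17} as well.
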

\noindent
Details can be found in \cite[section 2]{AlmPetersen17}.

\subsubsection{Relation to the Lie operad}\label{sec:liecorr}

The Gravity operad contains a copy of the Lie operad (up to an operadic suspension). More precisely, the gravity chord diagrams that correspond to forms of top degree can be interpreted as a basis of the Lie operad, and the prime gravity chord diagrams of top degree give a generating collection for the Lie operad as a nonsymmetric operad. The correspondence is simple, one essentially just reinterprets a chord \(c_{ij}\) as a bracketing of inputs \(i\) and \(j\), and adds an overall bracket \([1,n]\) corresponding to the distinguished side \(n+1\). For example:
\[
\begin{tikzpicture}[font=\scriptsize, baseline={([yshift=-.5ex]current bounding box.center)}]
  \node[regular polygon, shape border rotate=60,
        regular polygon sides=6, minimum size=1cm, draw] at (5*2,0) (A) {};
  \path[thick] (A.corner 1) edge node[auto] {} (A.corner 4)
               (A.corner 2) edge node[auto] {} (A.corner 5)
               (A.corner 2) edge node[auto] {} (A.corner 6);
\end{tikzpicture}\,
\sim \bigl[[1,3],[[2,4],5]\bigr]\in\Lie(4).
\]
This gives a different characterisation of prime gravity chord diagrams. 
Define \(L(n)\) to be the set of iterated binary bracketings of the indicies \(1,\dots , n\), subject to the following conditions: (i) Each index appears exactly once, and (ii) the smallest index in a bracket stands to the left and the largest to the right. For example, \([1,[2,3]]\) and \([[1,2],3]\) both lie in \(L(3)\), but neither \([2,[1,3]]\) nor \([[1,3],2]\) does.

Say that a binary bracket \(b\) (of bracketings) in an element of \(L(n)\) is \emph{connected} if the set of indices appearing inside \(b\) is a subinterval of \([n]\). Define the set \(P(n)\) of \emph{prime bracketings}, to be the subset of \(L(n)\) consisting of all those \(P\) with the property that only the outermost bracket is connected.

Then \(L(n)\) is equal to the set of top degree gravity chord diagrams and \(P(n)\) is equal to the set of top degree prime gravity chord diagrams.

\subsection{Regularisation}\label{sec:regularisation}

Setting all gravity chord diagrams that are not prime to zero defines a morphism of collections
\[
  \reg: H^*(M_0) \to H^*(M^{\delta})
\]
that we call \emph{regularisation}. Observe that it is left inverse to the canonical restriction. Note also that it does not respect the algebra structures. It is, furthermore, not a morphism of cooperads, as the following example shows:
\[
\Reg_{45}
\begin{tikzpicture}[baseline={([yshift=-.5ex]current bounding box.center)}]
\node[regular polygon, shape border rotate=360/14, regular polygon sides=7, minimum size=1cm, draw] at (5*2,0) (A) {};
\foreach \i in {1,...,7} {
    \coordinate (ci) at (A.corner \i);
    \coordinate (si) at (A.side \i);
    \node at (ci) {};
 }
 \path[thick] (A.corner 1) edge (A.corner 3)
              (A.corner 1) edge (A.corner 6)
              (A.corner 4) edge (A.corner 7);
\end{tikzpicture}
 = \Bigl(
\begin{tikzpicture}[baseline={([yshift=-.5ex]current bounding box.center)}]
\node[regular polygon, shape border rotate=360/6, regular polygon sides=6, minimum size=1cm, draw] at (5*2,0) (A) {};
\foreach \i in {1,...,6} {
    \coordinate (ci) at (A.corner \i);
    \coordinate (si) at (A.side \i);
    \node at (ci) {};
 }
 \path[thick] (A.corner 1) edge (A.corner 3)
              (A.corner 1) edge (A.corner 5)
              (A.corner 3) edge (A.corner 6);
\end{tikzpicture}
+
\begin{tikzpicture}[baseline={([yshift=-.5ex]current bounding box.center)}]
\node[regular polygon, shape border rotate=360/6, regular polygon sides=6, minimum size=1cm, draw] at (5*2,0) (A) {};
\foreach \i in {1,...,6} {
    \coordinate (ci) at (A.corner \i);
    \coordinate (si) at (A.side \i);
    \node at (ci) {};
 }
 \path[thick] (A.corner 1) edge (A.corner 3)
              (A.corner 1) edge (A.corner 5)
              (A.corner 2) edge (A.corner 6);
\end{tikzpicture} 
\Bigr)\otimes\!\!
\begin{tikzpicture}[baseline={([yshift=-.5ex]current bounding box.center)}]
\node[regular polygon, shape border rotate=360/6, regular polygon sides=3, minimum size=.7cm, draw] at (5*2,0) (A) {};
\foreach \i in {1,...,3} {
    \coordinate (ci) at (A.corner \i);
    \coordinate (si) at (A.side \i);
    \node at (ci) {};
 }
\end{tikzpicture}.
\]
The displayed shows \(\reg(\Reg_{45}(\alpha_{15}\alpha_{12}\alpha_{46})) = \alpha_{14}\alpha_{13}\alpha_{25}\otimes 1\), corresponding to the prime hexagon, but \(\reg(\alpha_{15}\alpha_{12}\alpha_{46})=0\) because the initial diagram on the heptagon has a residual chord. The example shows a general fact: The algebra relations can only reduce the number of residual chords, not increase them.

\subsubsection{The residual weight filtration}\label{sec:resfiltration}

Define a filtration \(R^*\) on \(H^*(M_0(n))\) by declaring \(R^rH^*(M_0(n))\) to be the space of forms that have nonzero residue only to boundary strata of the dihedral moduli space that are of codimension \(\leq r\). Equivalently, it is spanned by gravity chord diagrams with \(\leq r\) residual chords. We shall call this \emph{the residual weight filtration}.

\subsubsection{The associated graded cooperad}\label{sec:regcoop}

Note that the associated graded \(\gr{}H^*(M_0)\) and \(H^*(M_0)\) are isomorphic collections. However, the associated graded has an induced cooperad structure, that differs. We shall denote the cocomposition \(\Regr\). Thus, e.g.,
\[
\Regr_{45}
\begin{tikzpicture}[baseline={([yshift=-.5ex]current bounding box.center)}]
\node[regular polygon, shape border rotate=360/14, regular polygon sides=7, minimum size=1cm, draw] at (5*2,0) (A) {};
\foreach \i in {1,...,7} {
    \coordinate (ci) at (A.corner \i);
    \coordinate (si) at (A.side \i);
    \node at (ci) {};
 }
 \path[thick] (A.corner 1) edge (A.corner 3)
              (A.corner 1) edge (A.corner 6)
              (A.corner 4) edge (A.corner 7);
\end{tikzpicture}
 = 
\begin{tikzpicture}[baseline={([yshift=-.5ex]current bounding box.center)}]
\node[regular polygon, shape border rotate=360/6, regular polygon sides=6, minimum size=1cm, draw] at (5*2,0) (A) {};
\foreach \i in {1,...,6} {
    \coordinate (ci) at (A.corner \i);
    \coordinate (si) at (A.side \i);
    \node at (ci) {};
 }
 \path[thick] (A.corner 1) edge (A.corner 3)
              (A.corner 1) edge (A.corner 5)
              (A.corner 3) edge (A.corner 6);
\end{tikzpicture} 
\otimes\!\!
\begin{tikzpicture}[baseline={([yshift=-.5ex]current bounding box.center)}]
\node[regular polygon, shape border rotate=360/6, regular polygon sides=3, minimum size=.7cm, draw] at (5*2,0) (A) {};
\foreach \i in {1,...,3} {
    \coordinate (ci) at (A.corner \i);
    \coordinate (si) at (A.side \i);
    \node at (ci) {};
 }
\end{tikzpicture}.
\]
There is an isomorphism \((\gr{0} H^*(M_0), \Regr) \cong H^*(M^{\delta})\), as nonsymmetric cooperads. The general cocomposition \(\Regr_c G\) of the associated graded can be described by the following rule. The residual chords of \(G\) define a tesselation, and on each face we have a prime diagram. If \(c\) is equal to one of the residual chords or if it crosses a chord in \(G\), then \(\Regr_cG\) is zero. Otherwise it divides one of the prime diagrams on a face into two. Use the relations \emph{on those two faces}. The associated graded is, as an algebra, a sum of tensor powers of the algebra \(H^*(M^{\delta})\).

We deduce that we have a split inclusion
\[
  H^*(M^{\delta}) \hookrightarrow \gr{} H^*(M_0) \xrightarrow{\reg} H^*(M^{\delta})
\]
of nonsymmetric cooperads.

\subsection{The dihedral KZ connection}

\subsubsection{Dihedral braids}

Define the \textit{dihedral braid Lie algebra} on the set \([n]\), to be denoted \(\mathfrak{d}(n)\), to be the Lie algebra generated by variables \(\delta_{ij}\), \(i,j\in [n]\), modulo the relations that \(\delta_{ji}=\delta_{ij}\) for all indices, \(\delta_{ij}=0\) unless \(\{i,j\}\in \chi_1(n)\) is a chord, and
\[
  \bigl[\delta_{i\,j}+\delta_{i+1\,j-1}-\delta_{i+1\,j}-\delta_{i\,j-1},
  \delta_{k\,l}+\delta_{k+1\,l-1}-\delta_{k+1\,l}-\delta_{k\,l-1}\bigr]=0
\]
for all quadruples of indices in \([n]\) such that \(\#\{i,j,k,l\}=4\). Since the relations are quadratic we can and shall consider the dihedral braid Lie algebra as completed with respect to the number of brackets.

\subsubsection{Relation to ordinary braids}\label{sec:braids}

The \emph{spherical braid Lie algebra} is the Malcev Lie algebra \(\mathfrak{p}(n)\) generated by \(p_{ij}\) for \(1\leq i,j\leq n+1\) and relations \([p_{ij}, p_{kl}]=0\) if \(\#\{i,j,k,l\}=4\), and the linear relations \(p_{ij}=p_{ji}\), \(p_{ii}=0\) and \(\sum_{k=1}^{n+1} p_{ik}=0\) for all \(i\). Brown proved in \cite{Brown09} that setting
\[
  p_{ij} = \delta_{i\,j}+\delta_{i+1\,j-1}-\delta_{i+1\,j}-\delta_{i\,j-1}
\]
is an isomorphism of Lie algebras \(\mathfrak{d}(n) \cong \mathfrak{p}(n)\), i.e., the dihedral Lie algebra is just a different presentation (without linear relations!) of the spherical braid Lie algebra. The inverse is (assume \(i<j\), w.l.o.g.)
\[
  \delta_{ij} = \sum_{i \leq r < s\leq j} p_{rs}.
\]
The spherical braid Lie algebra is well-known to be isomorphic (over the rational numbers) to the Malcev Lie algebra of the mapping class group \(\pi_1(M_0(n))\). Since \(M_0(n)\) is a rational \(K(\pi, 1)\), this means that the Chevalley-Eilenberg cohomology \(H^*(\mathfrak{d}(n))\) is isomorphic to the cohomology of \(M_0(n)\).

Let \(\mathfrak{t}(n)\) be the usual braid Lie algebra, also known as the Drinfeld-Kohno Lie algebra. It has generators \(t_{ij}\) for \(1\leq i,j\leq n\) and relations \([t_{ij}, t_{kl}]=0\) for all quadruples of distinct indices, \([t_{ij},t_{ik}+t_{jk}]=0\) for all triples of distinct indices, and linear relations \(t_{ij}=t_{ji}\) and \(t_{ii}=0\). Its cohomology \(H^*(\mathfrak{t}(n))\) is isomorphic to the cohomology of the moduli space of configurations of \(n\) distinct points in the plane. The relations \(p_{in}=-\sum_{k=1}^n p_{ik}\) imply that the Lie algebra \(\mathfrak{p}(n)\), hence also \(\mathfrak{d}(n)\), is isomorphic to the quotient of \(\mathfrak{t}(n)\) by the additional relation \(2\sum_{1\leq i<j\leq n} t_{ij}=0\).

\subsubsection{Braids are an operad}

First, note that the isomorphism \(\mathfrak{d}(n)\cong\mathfrak{p}(n)\) means that the dihedral braid Lie algebra has an \(\bS_n\)-action, given by \(\sigma\cdot p_{ij} = p_{\sigma(i)\sigma(j)}\). Grafting trees can equivalently be though of as gluing polygons, giving functions
\[
  f\sqcup g : \chi_1(n/I)\sqcup\chi_1(I) \to \chi_1(n)
\]
that reinterpret a chord on a smaller polygon as a chord on the glued polygon. Define
\[
  \circ_i: \mathfrak{d}(n/I)\oplus \mathfrak{d}(I) \to \mathfrak{d}(n)
\]
by \(\delta_a\oplus \delta_b \mapsto \delta_{f(a)}+\delta_{g(b)}\). These compositions are obviously associative (because gluing polygons is), and defining relations are mapped to defining relations (only the indices change).

\subsubsection{The KZ connection}\label{sec:KZ}

The \emph{Knizhnik-Zamolodchikov connection} is usually given as the flat connection one-form 
\[
  \sum_{1\leq i < j\leq n+1} \omega_{ij}p_{ij}
\]
on \(M_0(n)\), where \(\omega_{ij} =\dr\log(z_i - z_j)\). Using the dihedral Lie algebra one can instead write it
\[
  \alpha = \sum_{c\in\chi_1(n)} \alpha_c\delta_c.
\]
Brown proves in \cite{Brown09} that it is compatible with regularised restriction to boundary strata. Equivalently, it is a morphism
\[
  \alpha: C^*(\mathfrak{d}) \to H^*(M_0)
\]
of cooperads of differential graded commutative algebras from the cooperad of Chevalley-Eilenberg cochains. It is in fact a quasi-isomorphism of cooperads, since \(H^*(\mathfrak{p})=H^*(M_0)\), as remarked in \ref{sec:braids}.

\subsubsection{Logarithmic explanation}

We have not checked the details but it seems probable that the operadic structure on dihedral braids can be deduced from the fact demonstrated in \ref{sec:logstruct}, that the dihedral moduli spaces are an operad of logarithmic varieties. Namely, to any logarithmic scheme \((X,\rM_X)/ \Spec(\Q,\N)\) with a rational point \(x\in X(\Q)\) one can associate a logarithmic de Rham fundamental group \(\pi_1^{\mathsf{dR}}(X,\rM_x,x)\), via a theory of nilpotent integrable logarithmic connections.\cite{Shiho00} If \(X\) is a smooth, proper scheme over \(\Q\) and the logarithmic structure \(\rM_X\) is defined by a normal crossings divisor \(D\subset X\), then the logarithmic de Rham fundamental groupoid of \((X,\rM_X)\) is isomorphic to the usual de Rham fundamental groupoid of \(X\setminus D\). Thus, although the dihedral braid Lie algebra is most immediately related to the cohomology of the open moduli space it is, in our context, better to think of it as related to the logarithmic de Rham cohomology of the dihedral moduli space, and to think of the Knizhnik-Zamolodchikov connection as a ``universal'' logarithmic connection on the dihedral moduli space.

\subsubsection{Regularised KZ}\label{sec:regKZ}

We call the composite morphism
\[
  \alpha_{\reg} :  C^*(\mathfrak{d}) \xrightarrow{\alpha} H^*(M_0)\cong \gr{} H^*(M_0) 
  \xrightarrow{\reg} H^*(M^{\delta})
\]
the \emph{regularised Knizhnik-Zamolodchikov connection}. Note that it is niether a morphism of cooperads, nor a connection (it is not defined by a one-form). It will nevertheless prove useful because, as we will later show, it is close enough to being a morphism of cooperads that it will allow us to relate the Hochschild cohomology of \(C_*(\mathfrak{d})\) to that of \(H_*(M^{\delta})\).

\section{GRT and Hochschild cohomology}

\subsection{The Grothendieck-Teichm\"uller Lie algebra}

The standard definition of \(\mathfrak{grt}_1\), the \emph{Grothendieck-Techm\"uller Lie algebra}, is that it is the set of all formal Lie series \(\psi\in\Lie\bbrackets{x,y}\) in two variables satisfying the \emph{pentagon equation}
\begin{align*}
  \psi(t_{12},t_{23}+t_{24}) &+ \psi(t_{13}+t_{23},t_{34}) =\\
   &\psi(t_{23},t_{34}) + \psi(t_{12}+t_{13},t_{24}+t_{34}) + \psi(t_{12},t_{23})
\end{align*}
inside the braid Lie algebra \(\mathfrak{t}(4)\) (cf.~\ref{sec:braids}) and the two symmetry equations
\begin{align*}
  \psi(x,y) + \psi(y,-x-y) + \psi(-x-y,x) &= 0, \\
  \psi(x,y) + \psi(y,x) &= 0.
\end{align*}
The Lie bracket is not the free bracket on \(\Lie\bbrackets{x,y}\), but the so-called Ihara bracket:
\[
  [\psi, \phi]_{\text{Ihara}} = D_{\psi}(\phi) - D_{\phi}(\psi) + [\psi,\phi],
\]
where \(D_{\xi}\) is the derivation defined by \(D_{\xi}(x) = 0\), \(D_{\xi}(y) = [y,\xi]\). However, we make no claims about compatibility with the Ihara bracket. It plays no role in what follows.

We now mention the following theorem by Furusho \cite{Furusho07}, with a later, independent proof by Willwacher \cite{Willwacher15}:

\begin{thm}\label{thm:furusho}
The space of formal Lie series in two variables that satisfy the pentagon equation is spanned by\, \([x,y]\) and\, \(\mathfrak{grt}_1\).
\end{thm}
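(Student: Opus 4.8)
The plan is to reduce the pentagon equation to a statement about the degree-one part $\psi_1$ of $\psi$ and a recursive, purely "linear-plus-lower-order" constraint on the higher components, then to identify the solution space with $\Q[x,y]\oplus\mathfrak{grt}_1$ using the known dimension of $\mathfrak{grt}_1$ in each weight. First I would fix the standard filtration of $\Lie\bbrackets{x,y}$ by bracket length and write $\psi=\sum_{n\geq1}\psi_n$ with $\psi_n$ homogeneous of degree $n$. Expanding the pentagon equation $\psi(t_{12},t_{23}+t_{24})+\psi(t_{13}+t_{23},t_{34})=\psi(t_{23},t_{34})+\psi(t_{12}+t_{13},t_{24}+t_{34})+\psi(t_{12},t_{23})$ degree by degree in $\mathfrak{t}(4)$, one sees that the top-degree part of the equation in degree $n$ only involves $\psi_n$ linearly, while the corrections from lower $\psi_m$ with $m<n$ appear as an inhomogeneous term. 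Thus the space of pentagon solutions is built up as an iterated extension, and the key is to understand the linear operator $L_n$ on degree-$n$ Lie series whose kernel (modulo solvability of the inhomogeneous problem) governs the solution space.

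The main technical input is the comparison with the cohomology of the relevant configuration-space operad. Following Willwacher's approach (and the graphical $\mathsf{Graphs}$-complex machinery alluded to in the introduction), I would interpret the linearized pentagon equation as a cocycle condition in a small complex — concretely, a piece of the Harrison/co-Lie complex or, equivalently via the KZ morphism $\alpha:C^*(\mathfrak{d})\to H^*(M_0)$ from \ref{sec:KZ}, a deformation complex of the associative operad — and compute that its relevant cohomology in each weight $n$ has dimension equal to $\dim\mathfrak{grt}_1$ in weight $n$ for $n\geq2$, with exactly one extra class in weight $n=1$, namely $[x,y]$. The symmetry equations, which are \emph{not} imposed here, are precisely what cuts the weight-one space down; without them $[x,y]$ survives, which is the content of the "spanned by $[x,y]$ and $\mathfrak{grt}_1$" phrasing. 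One must check that $[x,y]$ itself does solve the pentagon equation (a short direct computation using $t_{12}+t_{13}+t_{23}$ being central in the relevant subalgebra and the hexagon-type cancellations), and that $\mathfrak{grt}_1$ solutions do too (true by definition), so both sit inside the solution space; the content of the theorem is the reverse inclusion.

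The hard part is the reverse inclusion: showing there are \emph{no further} pentagon solutions beyond the span of $[x,y]$ and $\mathfrak{grt}_1$. This is where I expect the real obstacle to lie, and it is essentially a rigidity/formality statement. The clean way to get it is to invoke Willwacher's theorem that $H^0$ of the Kontsevich graph complex (equivalently, the relevant operadic deformation cohomology) is exactly $\mathfrak{grt}_1$, together with a careful bookkeeping argument that the pentagon equation alone — dropping the two symmetry relations — enlarges the solution space by exactly the one-dimensional weight-one piece $\Q[x,y]$ and nothing else. Concretely one shows: (i) any pentagon solution $\psi$ can be modified by a multiple of $[x,y]$ so that its weight-one part vanishes; (ii) a pentagon solution with vanishing weight-one part automatically satisfies the two symmetry equations, hence lies in $\mathfrak{grt}_1$. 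Step (ii) is the crux — it says the symmetry equations are \emph{consequences} of the pentagon equation once the anomalous $[x,y]$ direction is removed — and proving it requires exploiting the $\bS_4$-equivariance (dihedral symmetry) of the configuration $M_0(4)$ and the structure of the operad morphism, rather than a brute-force series manipulation. I would present the full argument by deferring the operadic cohomology computation to the body of the paper (sections 4--5 as the introduction promises) and here only sketch the reduction, remarking that Willwacher's $\cite{Willwacher15}$ gives an independent self-contained route via the graph complex.
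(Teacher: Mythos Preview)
The paper does not prove this theorem. It is stated as a result of Furusho \cite{Furusho07}, with an independent proof by Willwacher \cite{Willwacher15}, and is used as an input to the rest of the paper (see the proof of Proposition~\ref{prop:grtharr}). There is therefore no ``paper's own proof'' to compare your proposal against; you are attempting to reprove a cited external result.

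That said, your sketch has two substantive errors worth flagging. First, the pentagon equation is \emph{linear} in \(\psi\): each of the five terms is \(\psi\) evaluated at specific elements of \(\mathfrak{t}(4)\), with no products of \(\psi\)'s. Hence the equation decouples completely along the weight grading of \(\Lie\bbrackets{x,y}\), and there are no ``inhomogeneous terms from lower \(\psi_m\)'' as you describe. Your entire first paragraph, setting up an iterated-extension / obstruction-theory picture, is based on a misreading of the equation. Second, \([x,y]\) has weight two, not weight one; there is nothing in weight one because a linear \(\psi(x,y)=ax+by\) forces \(a=b=0\) immediately from the pentagon. So your step (i), ``modify by a multiple of \([x,y]\) to kill the weight-one part'', is not the right normalisation. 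The actual content of Furusho's theorem is that in weight \(\geq 3\) the pentagon alone forces the two symmetry relations, while in weight two it does not (and \([x,y]\) is the unique exception). Your step (ii) correctly identifies this as the crux, but the appeal to ``\(\bS_4\)-equivariance of \(M_0(4)\)'' is not a proof; Furusho's argument is a delicate direct analysis in the free Lie algebra, and Willwacher's goes through the full graph-complex computation.
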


\noindent
Thus, the two defining symmetry equations are (nearly) superfluous. The Grothendieck-Teichm\"uller Lie algebra is equivalently defined as the set of Lie series of depth at least two satisfying the pentagon equation.

\subsubsection{Definiton as Hochschild cohomology}\label{sec:dihcom}

The collection of dihedral braid Lie algebras is  an infinitesimal \(\As\)-bimodule, hence also its degree-shift \(s\mathfrak{d}\). The right actions
\[
  \partial_i = (-)\circ_i m_2 : s\mathfrak{d}(n) \to s\mathfrak{d}(n+1), \;\;\; 1\leq i\leq n
\]
of the binary generator \(m_2\in\As(2)\) are given by gluing a triangle to side \(i\) of the regular \((n+1)\)-gon, and reindexing the chords accordingly. Similarly, the left actions \(\partial_{n+1} = m_2\circ_1(-)\) and \(\partial_0 = m_2\circ_1(-)\) are given by gluing the polygon to side \(1\) or side \(2\) of a triangle, and reindexing.

\begin{prop}\label{prop:grtharr}
\(H^0(\As,s\mathfrak{d}) = \Q\oplus\mathfrak{grt}_1\).
\end{prop}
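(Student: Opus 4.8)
The plan is to compute the degree zero cohomology of the Hochschild complex $C(\As, s\mathfrak{d})$ directly, using the identification from \ref{sec:braids} that each $\mathfrak{d}(n)$ is (a presentation of) the spherical braid Lie algebra $\mathfrak{p}(n)$, which in turn is a quotient of the Drinfeld--Kohno Lie algebra $\mathfrak{t}(n)$. In cohomological degree zero with the conventions of \ref{sec:defquism}, a Hochschild cochain is a sequence $(\psi_n)_{n\geq 2}$ with $\psi_n \in \susp{2-n}(s\mathfrak{d})(n)$; the degree bookkeeping forces the relevant contributions to sit in low arity, and the cocycle condition $\delta\psi = 0$ is precisely a simplicial identity built from the face maps $\partial_0,\dots,\partial_{n+1}$ described just above the statement (gluing a triangle onto a side of the polygon and reindexing). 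First I would spell out, in arity $2,3,4$, exactly what the vanishing of $\delta\psi$ says: the arity $2$ piece $\psi_2 \in s\mathfrak{d}(2)$ is essentially forced (since $\mathfrak d(2)$ is one–dimensional, spanned by $\delta_{12}\leftrightarrow [x,y]$ — this will turn out to be the $\Q$ summand), and the condition relating $\psi_2$ to $\psi_3$ together with the condition on $\psi_3,\psi_4$ should translate, after the substitution $p_{ij}=\delta_{ij}+\delta_{i+1\,j-1}-\delta_{i+1\,j}-\delta_{i\,j-1}$ and passage to $\mathfrak{t}(4)$, into exactly the pentagon equation as displayed in \ref{sec:dihcom}.

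The core of the argument is therefore a dictionary between Hochschild $0$-cocycles for $s\mathfrak{d}$ and formal Lie series satisfying the pentagon equation. Concretely: (i) an element of $s\mathfrak{d}(3)$ is a Lie series in the two generators $\delta_{13},\delta_{24}$ of $\mathfrak d(3)\cong\mathfrak p(3)$ (the free part of a two–generated Lie algebra after the spherical relations), which I identify with $\Lie\bbrackets{x,y}$; (ii) the three right face maps $\partial_1,\partial_2,\partial_3$ and the two left face maps $\partial_0,\partial_4$ into $s\mathfrak d(4)$ correspond, under $\mathfrak d(4)\cong\mathfrak p(4)\cong\mathfrak t(4)/(2\!\sum t_{ij})$, to the five substitutions of pairs of (sums of) $t_{ij}$'s appearing on the two sides of the pentagon equation; (iii) the alternating sum of these five maps being zero is literally the pentagon equation. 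One then invokes Theorem \ref{thm:furusho} (Furusho; independently Willwacher): the space of pentagon solutions is $\Q[x,y] \oplus \mathfrak{grt}_1$. The remaining point is that the higher arity components $\psi_n$, $n\geq 5$, of a $0$-cocycle are uniquely determined by — and automatically compatible with — $(\psi_2,\psi_3,\psi_4)$, and that nothing in degree zero is a coboundary; both follow from the fact that $\As$ is Koszul and $\As^{\antishriek}$ is one–dimensional in each arity, so the deformation/Hochschild complex of an infinitesimal bimodule has its degree zero cohomology controlled by arities $\leq 4$ (this is also morally why Tamarkin's argument, deferred to section 7, is invoked in this section). The $\Q$ summand is the class of $\psi$ of depth one, i.e.\ the span of $[x,y]$, which survives because $[x,y]$ alone is a (non-$\mathfrak{grt}$) pentagon solution.

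I expect the main obstacle to be (b) the precise matching of the five face maps $\partial_i : s\mathfrak d(3)\to s\mathfrak d(4)$ with the five terms of the pentagon equation, including getting the signs in $\delta = -\sum(-1)^i\delta_i$ to line up with the signs implicit in the pentagon relation after the change of presentation $\mathfrak d \to \mathfrak t$; this is a bookkeeping computation on hexagons and pentagons (gluing triangles, reindexing chords, then applying Brown's isomorphism and the inclusion into $\mathfrak t(4)$) that has to be done carefully but is routine. A secondary, more conceptual, point that I would need to justify cleanly is that a degree zero Hochschild cocycle for $s\mathfrak d$ is determined by its components in arity $\leq 4$ and that there are no degree zero coboundaries — i.e.\ that $H^0(\As, s\mathfrak d)$ really is cut out by the arity $3$ and arity $4$ equations alone. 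This is where I would lean on the Koszulness of $\As$, the shape of $C(\As,-)$ recalled in the Hochschild-cohomology subsection, and the preservation of quasi-isomorphisms from \ref{sec:defquism}, possibly together with the Tamarkin-style argument promised for section 7; once that reduction is in place, Theorem \ref{thm:furusho} finishes the proof.
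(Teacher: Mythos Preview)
Your overall strategy---identify the cocycle condition with the pentagon equation and then invoke Furusho's theorem---is exactly the paper's, but your degree and arity bookkeeping is off and this creates several spurious difficulties.

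The key observation you are missing is that \(\mathfrak{d}(n)\) sits in a single cohomological degree (zero), so \(s\mathfrak{d}(n)\) sits in degree \(-1\), and hence the arity \(n\) factor \(\susp{2-n}(s\mathfrak{d})(n)\) of the Hochschild complex is concentrated in degree \(n-3\). A degree zero cochain is therefore \emph{only} an element \(\psi\in\mathfrak{d}(3)\); there is no sequence \((\psi_n)\), no \(\psi_2\), no \(\psi_4\), and no higher arity components to worry about. The cocycle condition is a single equation \(\delta\psi=0\) in \(\mathfrak{d}(4)\). Moreover \(\mathfrak{d}(2)=0\) (a triangle has no chords), so there are no coboundaries at all in degree zero and \(H^0\) is literally the kernel of the one map \(\mathfrak{d}(3)\to\mathfrak{d}(4)\). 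This is what the paper means by ``for degree reasons''; Koszulness of \(\As\) and the Tamarkin argument of section~7 play no role here.

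Two smaller corrections follow from this. First, your claim that \(\mathfrak{d}(2)\) is one-dimensional and carries the \(\Q\) summand is wrong: \(\mathfrak{d}(2)=0\), and the \(\Q\) summand is the span of \([\delta_{12},\delta_{23}]\in\mathfrak{d}(3)\), i.e.\ the pentagon solution \(\psi(x,y)=[x,y]\) in arity three. Second, the chords on the square (arity \(3\)) are \(\{1,2\}\) and \(\{2,3\}\), so the generators of \(\mathfrak{d}(3)\cong\Lie\bbrackets{x,y}\) are \(x=\delta_{12}\), \(y=\delta_{23}\), not \(\delta_{13},\delta_{24}\) (the latter symbol is not even defined for \(n=3\)). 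With these fixes your dictionary between the five face maps \(\partial_0,\dots,\partial_4\) and the five terms of the pentagon equation, followed by the coordinate change to the \(t_{ij}\)'s and the appeal to Theorem~\ref{thm:furusho}, is precisely the paper's proof.
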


\begin{proof}
The degree zero Harrison cohomology of the dihedral braid Lie algebras is for degree reasons just the set of degree zero cocycles, which is to say the set of
\(
  \psi \in  \mathfrak{d}(3)
\)
satisfying the cocycle equation
\[
\partial_0\psi - \partial_1\psi + \partial_2\psi - \partial_3\psi + \partial_4\psi = 0.
\]
First, there are only two chords \(x=\delta_{12}\) and \(y=\delta_{23}\) on a square, so \(\mathfrak{d}(3)\) is the free algebra \(\Lie\bbrackets{x,y}\). The cocycle equation is
\[
  \psi(\delta_{23},\delta_{34}) - \psi(\delta_{13},\delta_{34}) + \psi(\delta_{13},\delta_{24}) - \psi(\delta_{12},\delta_{24}) + \psi(\delta_{12},\delta_{23}) = 0,
\]
and it is of course equivalent to the pentagon equation. Using the coordinate change of \ref{sec:braids} it can be written
\begin{align*}
  \psi(p_{12}&+p_{23}+p_{13},p_{34}) + \psi(p_{12},p_{23}+p_{34}+p_{24}) =\\
   &\psi(p_{23},p_{34}) + \psi(p_{12}+p_{23}+p_{13},p_{23}+p_{34}+p_{24}) + \psi(p_{12},p_{23}).
\end{align*}
Using that \(p_{ij}\) and \(p_{kl}\) commute if all four indices are different and that any generator commutes with itself then yields the standard form of the pentagon equation. The result follows from \ref{thm:furusho}.
\end{proof}

\noindent
The analogous statement with the Lie algebra of dihedral braids replaced by the usual Drinfeld-Kohno Lie algebra \(\mathfrak{t}\) is presented in \cite{Willwacher15}.  In detail, \(\mathfrak{grt}_1 = H^0(\Com, s\mathfrak{t})\). That is to say, the GrothendieckTechm\"uller Lie algebra consists of elements that satisfy the pentagon equation and have degree \(1\) in the Hodge-type decomposition 
\[
  \mathfrak{t}(3) =  e^1_3\ \mathfrak{t}(3) \oplus e^2_3\ \mathfrak{t}(3)\oplus e^3_3\ \mathfrak{t}(3).
\]
The equation \(e^2_3\psi = 0\)  is \(\psi(x,y) + \psi(y,x) = 0\). The equation \(e^3_3\psi = 0\) is analogously equivalent to
\[
 \psi(x,y) + \psi(y,-x-y) + \psi(-x-y,x) = 0,
\]
the second defining symmetry equation.

\subsection{Hochschild cohomology of moduli spaces}

The goal of this section is to show that \(H^0(\As, H_*(M^{\delta})) \cong H^0(\Ass, H_*(M_0))\).

\subsubsection{Dual Hochschild complex}\label{sec:dualhoch}

For any infinitesimal \(\As\)-bimodule \(N^{\vee}\),
\[
  C(\As, N^{\vee}) = \biggl( \bigoplus_{n\geq 2} \susp{n-2} N(n) \biggr)^{\!\vee}.
\]
Call the complex in parantheses on the right the \emph{dual Hochschild cochain complex} of \(N\). The differential is defined by ``coactions''; in the cases when \(N\) equals \(H^*(M_0)\) or \(H^*(M^{\delta})\) the differential is the sum with alternating signs of all ways to ``cut a corner'' (as cutting a corner is dual to gluing a triangle). In detail, the differential is in both these cases
\[
  \dr = - \sum_{i=0}^n(-1)^i \dr_i : N(n) \to N(n),
\]
where 
\[
  \dr_0 = \Reg_{2n},\ \dr_n = \Reg_{1\, n-1}\ \text{and}\ \dr_i = \Reg_{i\, i+1}
\]
for \(i=1,\dots, n-1\).

\subsubsection{Diagrammatic pullbacks}

Both the open and Brown's dihedral moduli spaces have point-forgetting projections, by which one can pull-back differential forms. Let us focus on the open moduli space, for now, and denote the projection that forgets the \(i\)th marked point \(z_i\) by \(\pi_i: M_0(n) \to M_0(n-1)\). The pullback
\[
  \pi^*_i : H^*(M_0(n-1)) \to H^*(M_0(n))
\]
maps a chord-form \(\alpha_c\) on the \(n\)-gon to the sum \(\pi^*_i\alpha_c = \sum \alpha_b\) over all chords \(b\) on the \((n+1)\)-gon that become identified with \(c\) when the \(i\)th side is contracted. For example:
\[
\pi^*_1\,
\begin{tikzpicture}[font=\scriptsize, baseline={([yshift=0ex]current bounding box.center)}]
\node[regular polygon, shape border rotate=36, regular polygon sides=5, minimum size=1cm, draw] at (0,0) (A) {};
 \path[thick] (A.corner 1) edge node[auto] {} (A.corner 4)
              (A.corner 2) edge node[auto] {} (A.corner 5);
\end{tikzpicture}
\,=\,
\begin{tikzpicture}[font=\scriptsize, baseline={([yshift=-.2ex]current bounding box.center)}]
\node[regular polygon, shape border rotate=60, regular polygon sides=6, minimum size=1cm, draw] at (0,0) (A) {};
 \path[thick] (A.corner 3) edge node[auto] {} (A.corner 6)
              (A.corner 1) edge node[auto] {} (A.corner 5);
\end{tikzpicture}
\,+\,
\begin{tikzpicture}[font=\scriptsize, baseline={([yshift=-.2ex]current bounding box.center)}]
\node[regular polygon, shape border rotate=60, regular polygon sides=6, minimum size=1cm, draw] at (0,0) (A) {};
 \path[thick] (A.corner 3) edge node[auto] {} (A.corner 6)
              (A.corner 2) edge node[auto] {} (A.corner 5);
\end{tikzpicture}\,.
\]

\subsubsection{Residual chords are exact}

We will prove that the associated graded for the filtration by residual weight (cf.~\ref{sec:resfiltration}) on the dual Hochschild cochain complex of \(H^*(M_0)\) only has cohomology in residual weight zero.

\begin{lem}\label{lem:triviality}
\(\dr_i\circ\, \pi^*_1 = 0\) restricted to\, \(H^{n-3}(M^{\delta}(n-1))\)\, for \(i=2,\dots, n-1\).
\end{lem}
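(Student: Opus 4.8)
The plan is to compute both sides diagrammatically on the chord-basis and verify that each term in $\dr_i\circ\pi^*_1$ either vanishes or cancels against another. First I would fix notation: $\pi_1\colon M_0(n)\to M_0(n-1)$ forgets the first marked point, so on the geometric side it contracts the side labelled $1$ of the $(n+1)$-gon (equivalently, it merges sides $n+1$ and $1$, or however the indexing convention of \ref{sec:braids} dictates); its pullback $\pi_1^*$ sends a chord-form $\alpha_c$ on the $n$-gon to the sum $\sum_b\alpha_b$ over all chords $b$ on the $(n+1)$-gon identified with $c$ after contracting that side. Since we start from a \emph{prime} gravity diagram representing a class in $H^{n-3}(M^\delta(n-1))$ (top degree on the $n$-gon, so every chord is crossed), I would first record how priming and top-degree are affected by $\pi_1^*$: each summand of $\pi_1^*G$ is again a top-degree diagram on the $(n+1)$-gon, and the new chords introduced are ``parallel copies'' differing only in whether an endpoint sits at the old side or the new side created by the doubling.

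Next I would apply $\dr_i=\Reg_{i\,i+1}$ for $i=2,\dots,n-1$, recalling from \ref{sec:regularisation} that $\Reg_c$ kills any diagram containing $c$ or a chord crossing $c$, and otherwise cuts along $c$. The key combinatorial observation to establish is: for each basis diagram $b$ appearing in $\pi_1^*G$, the chord $c_{i\,i+1}$ (which, after the doubling, is a chord that does \emph{not} touch the doubled side) either crosses some chord of $b$ — in which case $\Reg_{i\,i+1}b=0$ — or else, if it does not, then $b$ pairs off with exactly one other summand $b'$ of $\pi_1^*G$ on which $\Reg_{i\,i+1}$ produces the opposite contribution, so that the total is zero. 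Concretely, the two ``parallel'' copies $b,b'$ coming from a single chord of $G$ that straddles the contracted side are the source of the cancellation: after regularising along $c_{i\,i+1}$ they restrict to the \emph{same} tensor of diagrams on the two subpolygons but enter with cancelling signs, because cutting along $c_{i\,i+1}$ separates the doubled side from the data that distinguishes $b$ from $b'$. I would make this precise by tracking, for a fixed chord $c$ of $G$, which of its lifts survive $\Reg_{i\,i+1}$ and checking that survival is independent of the lift while the induced diagram on $M_0((n+1)/J)\otimes M_0(J)$ is independent of the lift as well.

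The main obstacle, and the step I would spend the most care on, is the bookkeeping of \emph{signs and graded-commutativity}: a top-degree diagram is a wedge of $n-3$ chords, only defined up to even permutation, and $\pi_1^*$ replaces one chord by a sum of two, so I must be sure that the Koszul signs attached to the two partners $b,b'$ are genuinely opposite after $\Reg_{i\,i+1}$ reshuffles the remaining chords onto the two faces. A clean way to handle this is to choose a consistent ordering convention in which the ``mobile'' chord (the one being lifted) is always written first; then $\pi_1^*$ is manifestly $\alpha_c\mapsto\alpha_{b}+\alpha_{b'}$ with no sign, and $\Reg_{i\,i+1}$ acts on the tail identically for both summands, reducing the claim to the single sign comparison between the two lifts of the mobile chord under the cut. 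A secondary subtlety is the possibility that a lift $b$ of $c$ equals the regularising chord $c_{i\,i+1}$ itself (forcing $\Reg_{i\,i+1}b=0$); I would check that this happens for $b$ if and only if it happens for $b'$, or that the unpaired case still lies in the span of non-prime — hence, after cutting, legitimately-zero — diagrams, using that $G$ was prime on the $n$-gon so no chord of $G$ can have been ``short'' enough to become $c_{i\,i+1}$ without being crossed. Once the pairing and sign lemma is in place the statement follows termwise.
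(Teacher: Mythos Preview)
Your proposed cancellation mechanism does not work. The pullback \(\pi_1^*\) sends \(\alpha_c\) to a \emph{sum} \(\sum_b \alpha_b\) with all coefficients \(+1\), and \(\Reg_{i\,i+1}\) introduces no signs either; so there is nothing to cancel. More seriously, your geometric picture is inverted: for \(2\leq i\leq n-1\) the chord \(c_{i\,i+1}\) encloses only the two sides \(i\) and \(i+1\), and the doubled side \(1\) lies on the \emph{large} subpolygon after the cut. Hence two lifts \(b,b'\) of a single chord of \(G\), if they survive \(\Reg_{i\,i+1}\), land as \emph{distinct} chords on that large face and give distinct tensors, not the same tensor with opposite sign. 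So the pairing you describe does not produce zero.

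The paper's argument is different in kind and avoids cancellation entirely: it shows that for each fixed \(i\) one can choose a basis of \(H^{n-3}(M^{\delta}(n-1))\) in which every basis diagram already contains a chord crossing \(c_{i-1\,i}\); then every summand of \(\pi_1^*G\) contains a chord crossing \(c_{i\,i+1}\) and is individually annihilated by \(\dr_i\). The existence of such a basis comes from the correspondence in \ref{sec:liecorr} between top-degree prime gravity chord diagrams and prime Lie words: every prime Lie word (other than \(\triang\)) contains a bracket \([1,\dots]\), hence a chord \(c_{1k}\) crossing \(c_{2\,n-1}\), and a cyclic rotation of this observation gives the basis adapted to any chosen \(i\). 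This is the missing idea in your attempt; without it, a termwise analysis of \(\pi_1^*G\) for an arbitrary prime \(G\) will not close.
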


\begin{proof}
Recall \(\dr_i = \Reg_{i\,i+1}\). When interpreted diagrammatically the kernel of \(\dr_i\) consists of all diagrams that either contain the chord \(c_{i\,i+1}\) or a chord that crosses it. The assumption \(2\leq i\leq n-2\) means that \(1\) is not one of the two sides enclosed by the chord \(c_{i\,i+1}\). Moreover, the pullback \(\pi\mathrlap{^*}_1G\) of a chord diagram \(G\) is a sum of diagrams that all contain a chord crossing \(c_{i\,i+1}\) if and only if \(G\) contains a chord that crosses \(c_{i-1\,i}\).

All prime gravity chord diagrams of top degree on an \(n\)-gon (except \(\triang\)) contain a chord crossing \(c_{2\,n-1}\). This is perhaps most evident recalling the correspondence with prime Lie words given in \ref{sec:liecorr}. Any such word must contain a bracketing \([1,\dots]\), hence a chord of the form \(c_{1k}\).

If now \(\sigma\in\Z/n\Z\) is a cyclic rotation, then the set \(\{\sigma(P)\}\) of rotated prime gravity chord diagrams must also be a basis. Equivalently, we defined the notion of being a gravity chord diagram by distinguishing the ``top'' side, i.e., the side \(n+1\), but could just as well have distinguished another side. Thus, we can construct a basis of \(H^{n-2}(M^{\delta}(n-1))\) consisting of diagrams that all contain a chord crossing \(c_{i\,i+1}\).
\end{proof}

\begin{cor}\label{cor:onto}
The linear map
\[
  \sum_{i=1}^{n-1} (-1)^i\dr_i : H^{n-3}(M^{\delta}(n)) \to H^{n-3}(M^{\delta}(n-1))
\]
is onto.
\end{cor}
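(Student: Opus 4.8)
The plan is to produce an explicit section of the map in the statement, built from the point-forgetting projection $\pi_1\colon M^\delta(n)\to M^\delta(n-1)$. Its pullback on cohomology preserves degree, and since $\pi_1$ is defined already on the compactifications it restricts to a well-defined linear map $\pi_1^*\colon H^{n-3}(M^\delta(n-1))\to H^{n-3}(M^\delta(n))$ between the two spaces appearing in the corollary.

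The first step is to identify $\dr_1\circ\pi_1^*$ with the identity. The chord $c_{12}$ is the one associated with the first partial composition $\circ_1\colon M^\delta(n-1)\times M^\delta(2)\to M^\delta(n)$, and $M^\delta(2)$ is a point; since on the subspace of prime diagrams the regularised restriction is just honest restriction to a boundary stratum, $\dr_1=\Reg_{1\,2}$ agrees on $H^*(M^\delta)$ with the pullback $(\circ_1)^*$ along this boundary embedding. Moreover $\circ_1$ is a section of $\pi_1$: doubling a marked point and then forgetting one of the two resulting copies returns the original curve, which is one of the cosimplicial identities relating a codegeneracy to an adjacent face. Hence $\pi_1\circ\circ_1=\id$, and therefore
\[
  \dr_1\circ\pi_1^* \;=\; (\circ_1)^*\circ\pi_1^* \;=\; (\pi_1\circ\circ_1)^* \;=\; \id
\]
on $H^{n-3}(M^\delta(n-1))$.

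The second step uses the work already done: Lemma \ref{lem:triviality} says precisely that $\dr_i\circ\pi_1^*=0$ on $H^{n-3}(M^\delta(n-1))$ for every $i=2,\dots,n-1$. Combining the two steps,
\[
  \Bigl(\sum_{i=1}^{n-1}(-1)^i\dr_i\Bigr)\circ\pi_1^* \;=\; \sum_{i=1}^{n-1}(-1)^i\,(\dr_i\circ\pi_1^*) \;=\; -\,\id
\]
on $H^{n-3}(M^\delta(n-1))$. So $-\pi_1^*$ is a right inverse of $\sum_{i=1}^{n-1}(-1)^i\dr_i$, which is therefore onto.

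All of the real content already sits in Lemma \ref{lem:triviality}; what is left here is essentially bookkeeping, and that is where I expect the only minor friction. One must check that $\pi_1$ is indeed the codegeneracy for which $\dr_1\circ\pi_1^*=\id$, and that $\pi_1^*$ genuinely takes values in $H^{n-3}(M^\delta(n))$ — the span of the degree $n-3$ prime gravity chord diagrams — and not merely in $H^{n-3}(M_0(n))$; this follows from functoriality of the forgetting maps with respect to the closed boundary strata (equivalently, from the compatibility of $\pi_1$ with the inclusion $H^*(M^\delta)\subset H^*(M_0)$). As with Lemma \ref{lem:triviality}, one should read the statement for $n\ge 4$, the very small cases being degenerate and harmless.
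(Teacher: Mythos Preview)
Your argument is correct and matches the paper's proof essentially line for line: use Lemma~\ref{lem:triviality} to kill $\dr_i\circ\pi_1^*$ for $i=2,\dots,n-1$, then observe $\dr_1\circ\pi_1^*=\id$, so that $-\pi_1^*$ is a right inverse. The paper states the identity $\dr_1\circ\pi_1^*=\id$ without justification; your explanation via $\pi_1\circ\circ_1=\id$ is the intended one, and your remark that $\pi_1^*$ lands in $H^*(M^\delta)$ (because $\pi_1$ is already a map of the dihedral compactifications) is exactly what is needed.
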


\begin{proof}
It follows from lemma \ref{lem:triviality} that \(\sum_{i=1}^{n-1} (-1)^i \dr_i\circ\,\pi^*_1 = -\dr_1\circ \,\pi^*_1\). Then note that we have \(\dr_1\circ\, \pi^*_1=\id\).
\end{proof}

Consider the filtration by residual weight on the dual Hochschild cochain complex of the open moduli space, and its associated graded. We may identify it with the dual Hochschild cochain complex of the associated graded cooperad \(\gr{} H^*(M_0)\), introduced in \ref{sec:regcoop}.

\begin{lem}\label{lem:residexact}
The associated graded for the filtration by residual weight of the dual Hochschild cochain complex of \(H^*(M_0)\) only has cohomology in weight zero.
\end{lem}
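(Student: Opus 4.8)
The plan is to decompose the associated graded dual Hochschild complex according to the residual-weight grading and the tesselation each residual configuration induces, so that each graded piece becomes a tensor product of ``prime'' pieces, and then to contract an acyclic subcomplex using Corollary \ref{cor:onto}. Recall from \ref{sec:regcoop} that $(\gr{}H^*(M_0), \Regr)$ is, as an algebra and as a cooperad, built out of tensor powers of $H^*(M^{\delta})$: a gravity chord diagram $G$ of residual weight $r$ is specified by a tesselation of the $(n+1)$-gon by $r$ residual chords, together with a prime gravity chord diagram on each of the $r+1$ faces. The dual Hochschild differential $\dr = -\sum_i(-1)^i\dr_i$ with $\dr_i = \Regr_{\,i\,i+1}$ (and the two endpoint terms $\Regr_{2n}$, $\Regr_{1\,n-1}$) acts on such a $G$ by cutting a corner; on the associated graded this cut either lands entirely inside one face — dividing that face's prime diagram in two, which does not change the residual weight — or it meets a residual chord and kills the term. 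So in the associated graded, the differential acts \emph{face-local}: it only ever modifies the prime diagram sitting on the face adjacent to the corner being cut, while leaving the tesselation and the other faces untouched.

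Concretely, I would fix the residual weight $r$ and the combinatorial type of the tesselation $\tau$, and observe that the corresponding summand of $\gr{}C(\As, H^*(M_0))$ is the tensor product, over the faces $F$ of $\tau$, of (shifted) spaces of prime gravity chord diagrams on $F$, with a differential that is a ``sum of the face-local Hochschild differentials.'' More precisely, cutting a corner of the big polygon that lies strictly inside a face $F$ corresponds exactly to one of the corner-cuts of the smaller polygon $F$, i.e.\ to one of the maps $\Reg_{j\,j+1}$ (or an endpoint variant) for $F$; cutting a corner at a vertex where two faces meet, or cutting across a residual chord, gives zero on the associated graded. Hence the complex for fixed $\tau$ is a tensor product (over faces) of copies of the relevant truncated dual Hochschild complexes of $H^*(M^{\delta})$ restricted to the face — but with the crucial twist that we are in \emph{top degree on each face}, because $G$ has residual weight exactly $r$ forces every face to carry a prime diagram of top cohomological degree. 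This is precisely the situation governed by Lemma \ref{lem:triviality} and Corollary \ref{cor:onto}.

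I would then argue acyclicity in positive residual weight as follows. Pick any face $F$ of the tesselation that is a ``leaf'' — i.e.\ bounded by exactly one residual chord (such a face exists since $r\geq 1$) — and has at least four sides (a triangular leaf contributes only $\triang$ and is handled trivially, or absorbed). On that face the face-local part of the differential contains the maps $\dr_i$ for the interior corners of $F$, and by Lemma \ref{lem:triviality} composing with $\pi^*_1$ (the pullback forgetting a suitable marked point of the face) kills all but one of them, while $\dr_1\circ\pi^*_1 = \id$ on the top-degree cohomology of $F$. This produces a contracting homotopy on the $F$-tensor-factor, which by the tensor-product structure extends (via a Künneth/filtration argument, using that the other tensor factors carry complexes bounded below) to a contracting homotopy of the whole $\tau$-summand, showing it is acyclic. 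Summing over all $\tau$ with $r\geq 1$ kills everything in positive residual weight, leaving cohomology concentrated in weight zero — which is the dual Hochschild complex of $H^*(M^{\delta})$ itself, as noted at the end of \ref{sec:regcoop}.

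The main obstacle I anticipate is bookkeeping the endpoint terms $\dr_0 = \Reg_{2n}$ and $\dr_n = \Reg_{1\,n-1}$ together with the interaction between the global cyclic/dihedral labelling and the face-local labellings: when a corner to be cut is adjacent to a residual chord or to the distinguished side, one must check carefully that the contribution either vanishes or matches a genuine corner-cut of the appropriate smaller polygon, with the correct sign. A secondary point requiring care is making the ``tensor product of a contractible complex with bounded-below complexes is contractible'' step rigorous in the graded/filtered setting — most cleanly done by putting a secondary filtration by the number of chords inside the chosen leaf face $F$ and running the homotopy on the associated graded, then concluding by the same mapping-cone argument as in \ref{sec:defquism}. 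Once these are dispatched, the statement follows.
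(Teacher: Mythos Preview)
Your core strategy --- isolate a leaf face bounded by a single residual chord and apply Corollary \ref{cor:onto} there --- is the paper's. Two points need correction.

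The claim that ``residual weight exactly $r$ forces every face to carry a prime diagram of top cohomological degree'' is false; residual weight only counts residual chords and says nothing about the degrees on the faces. What is true is that in \emph{degree zero} of the dual Hochschild complex (which is all that is used downstream, in Theorem \ref{thm:isohoch}) a dimension count forces every face to be top degree: if the $(n+1)$-gon is cut by $r$ residual chords into $(k_i+1)$-gons then $\sum k_i = n+r$, and total cohomological degree $r+\sum d_i = n-2$ with $d_i\leq k_i-2$ forces $d_i = k_i-2$ for every $i$. The paper simply takes ``$G$ and $P$ top degree'' as the standing hypothesis rather than deducing it from residual weight.

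Once that is in place, the paper's argument is far more direct than your tensor-product/homotopy scheme, and in particular dissolves the bookkeeping obstacles you anticipate. Write the diagram as $G\wedge c\wedge P$ with $P$ on a leaf face not containing the distinguished side. Add one side to that face and place some prime $P'$ there. Since every $G$-face is top degree, every corner cut landing on a $G$-face vanishes for degree reasons (a degree-$(k-2)$ form cannot live on a $k$-gon), so there is nothing to track on that side. The two corners of the $P'$-face adjacent to the chord $c$ are not corners of the big polygon at all, which disposes of the endpoint terms. Hence the only surviving terms in $\dr(G\wedge c\wedge P')$ are the interior corner cuts of $P'$, i.e.\ precisely the surjective operator $\sum_{i=1}^{n-1}(-1)^i\dr_i$ of Corollary \ref{cor:onto}. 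Choosing $P'$ in the preimage of $P$ exhibits $G\wedge c\wedge P$ as exact. No K\"unneth argument, secondary filtration, or contracting homotopy is required --- the top-degree constraint already annihilates every cross-term you were planning to control by those means.
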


\begin{proof}
Because of theorem \ref{thm:almpetersen} a gravity chord diagram is equivalent to a tesselation (the residual chords) of the polygon into a number of smaller polygons, and on each face a prime gravity chord diagram. Thus, consider a diagram of the form \(G\wedge c\wedge P\) with \(c\) a residual chord and \(G\) and \(P\) top degree gravity chord diagrams and \(P\) prime. To prove the proposition we must show that such a diagram is exact for the ``corner cutting'' differential of the associated graded of the dual Hochschild cochain complex.

First, imagine adding one side to the face where \(P\) lives and putting some prime diagram \(P'\) on that face, viz:
\[
\begin{tikzpicture}[baseline={([yshift=-.5ex]current bounding box.center)}]
  \node[regular polygon, shape border rotate=45,
        regular polygon sides=8, minimum size=1.7cm, draw] at (0,0) (A) {};
  \coordinate (g) at (A.side 1);
  \coordinate (p) at (A.side 5);
  \path (A.center) -- (g) node[pos=.73] {$G$};
  \path (A.center) -- (p) node[pos=.3] {$P'$}; 
  \path[thick] (A.corner 3) edge node[auto] {} (A.corner 8);
\end{tikzpicture}\;\;\;
\text{instead of}\;\;\;
\begin{tikzpicture}[baseline={([yshift=-.5ex]current bounding box.center)}]
  \node[regular polygon, shape border rotate=360/14,
        regular polygon sides=7, minimum size=1.6cm, draw] at (0,0) (A) {};
  \coordinate (g) at (A.side 1);
  \coordinate (p) at (A.corner 5);
  \path (A.center) -- (g) node[pos=.64] {$G$};
  \path (A.center) -- (p) node[pos=.3] {$P$}; 
  \path[thick] (A.corner 3) edge node[auto] {} (A.corner 7);
\end{tikzpicture}\,.
\]
Since \(G\) is of top degree we can not cut any corner on that face. On the polygon enclosing \(P'\) we can (a priori) cut any corner except the two that meet the residual chord, since those are corners of \(P'\) and not of the big diagram. In other words, the corner cutting differential defined by the graded regularised restrictions \(\Regr_{ij}\) will act schematically as
\[
  \dr(G\wedge c\wedge {\textstyle{P'}}) = \pm G \wedge c\wedge \sum_{i=1}^{n-1}(-1)^i\dr_i {\textstyle{P'}}
\]
if the face of \(P'\) is an \((n+1)\)-gon. Thus, by corollary \ref{cor:onto}, the initial diagram \(G\wedge c\wedge P\) is exact.
\end{proof}

\subsubsection{Corollary: a map from GRT}

Let \(D(\As,-)\) denote the dual Hochschild cochain complex. By lemma \ref{lem:residexact} and \ref{sec:regcoop} we have a split inclusion
\[
 D(\As, H^*(M^{\delta})) \hookrightarrow D(\As,\gr{} H^*(M_0)) \xrightarrow{\reg}  D(\As, H^*(M^{\delta})).
\]
with \(\reg\) inducing an isomorphism on degree zero cohomology.

\begin{thm}\label{thm:isohoch}
\(H^0(\As,H_*(M^{\delta}))\cong H^0(\Ass, H_*(M_0))\).
\end{thm}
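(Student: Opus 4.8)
The plan is to reduce the theorem, via the dual Hochschild complex, to a formal consequence of Lemma~\ref{lem:residexact}. Recall from \ref{sec:dualhoch} that $C(\As, H_*(M^{\delta})) = D(\As, H^*(M^{\delta}))^{\vee}$ and $C(\Ass, H_*(M_0)) \cong C(\As, H_*(M_0)) = D(\As, H^*(M_0))^{\vee}$, where $D(\As,-)$ denotes the dual Hochschild cochain complex; as the cohomology of each of these moduli spaces is finite dimensional in every degree, $H^0(\As, H_*(M^{\delta}))$ and $H^0(\Ass, H_*(M_0))$ are the graded duals of the degree-zero cohomology of $D(\As, H^*(M^{\delta}))$ and of $D(\As, H^*(M_0))$ respectively. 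Since dualisation of cochain complexes of $\Q$-vector spaces is exact, it suffices to show that the natural inclusion $D(\As, H^*(M^{\delta})) \to D(\As, H^*(M_0))$ is a quasi-isomorphism; in fact only the statement in degree zero is needed.

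I would realise this inclusion as that of the bottom piece $R^0 D(\As, H^*(M_0))$ of the residual weight filtration of \ref{sec:resfiltration}. By \ref{sec:regcoop} the cocomposition $\Reg_c$, restricted to $R^0 H^*(M_0) \cong H^*(M^{\delta})$, is precisely the cooperadic cocomposition of $H^*(M^{\delta})$; so $R^0 D(\As, H^*(M_0))$ is a subcomplex, equal to $D(\As, H^*(M^{\delta}))$. Let $Q$ be the quotient complex. It carries an induced increasing filtration, exhaustive and bounded below, whose associated graded — again by \ref{sec:regcoop}, which identifies the associated graded of $D(\As, H^*(M_0))$ with the dual Hochschild complex $D(\As, \gr{} H^*(M_0))$ of the associated graded cooperad — is the sum $\bigoplus_{r \geq 1} D(\As, \gr{r} H^*(M_0))$. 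By Lemma~\ref{lem:residexact} this associated graded is acyclic.

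Finally I would conclude that $Q$ is itself acyclic. The induced filtration is finite in each arity (in arity $n$ it stabilises at weight $n-2$), so an induction on the weight — using that $R^0 Q = 0$ and that the successive quotients $\gr{r} Q$ are acyclic — shows each $R^r Q$ is acyclic; then $Q = \bigcup_{r} R^r Q$ is an increasing union of acyclic subcomplexes and hence acyclic, since homology commutes with filtered colimits. (Equivalently, the residual-weight spectral sequence of $Q$ has vanishing $E_1$-page and converges.) Thus $D(\As, H^*(M^{\delta})) \hookrightarrow D(\As, H^*(M_0))$ is a quasi-isomorphism, and dualising, together with the identification $C(\Ass, -) \cong C(\As, -)$, gives $H^0(\As, H_*(M^{\delta})) \cong H^0(\Ass, H_*(M_0))$.

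The substantive input is entirely Lemma~\ref{lem:residexact}; what remains is bookkeeping. One must verify that the dual Hochschild differential genuinely preserves the residual weight filtration and that its associated graded is governed by the cocomposition $\Regr$ of \ref{sec:regcoop}, so that Lemma~\ref{lem:residexact} applies verbatim; and one must be mildly careful about convergence, since $D(\As, H^*(M_0))$ is an infinite direct sum over arities while the residual weight filtration is globally unbounded — this is exactly why the acyclicity of $Q$ is obtained arity by arity (or as a filtered colimit) rather than by citing a finitely filtered complex.
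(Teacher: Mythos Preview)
Your proof is correct and takes essentially the same approach as the paper: both filter the dual Hochschild complex by residual weight and invoke Lemma~\ref{lem:residexact}, the paper phrasing this via the spectral sequence (noting $E_0^{p,q}=0$ for $p+q>0$ and $E_1^{p,-p}=0$ for $p\geq 1$) while you phrase it via the acyclicity of the quotient $Q$. Your explicit handling of convergence (finite filtration in each arity, then filtered colimit) spells out what the paper leaves implicit.
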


\begin{proof}
Filter \(D(\As, H^*(M_0))\) by residual weight and consider the induced spectral sequence. We have \(E_0^{p,q} = 0\) if \(p+q>0\) for degreee reasons. Moreover, \(p\geq 0\) because a chord diagram cannot have less than zero residual chords. By lemma \ref{lem:residexact}, \(E_1^{p,-p}=0\) unless \(p=0\), and then we have \(E_1^{0,0} = H^0(D(\As, H^*(M^{\delta})))\). By what has been said the spectral sequence abuts in degree zero already at the first page.
\end{proof}

Equivalently, the degree one cohomologies of the deformation complexes of the respective morphism from the associative operad are isomorphic. We can now deduce:

\begin{thm}\label{thm:grtinj}
The regularised KZ connection (see section \ref{sec:regKZ}) induces a map
\[
\alpha_{\reg}^{\vee} : \mathfrak{grt}_1 \rightarrow H^0(\As,H_*(M^{\delta})).
\]
from the Grothendieck-Teichm\"uller Lie algebra to the operadic Hochschild cohomology of the homology operad of Brown's dihedral moduli spaces.
\end{thm}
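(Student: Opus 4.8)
The plan is to assemble $\alpha_{\reg}^{\vee}$ from a zig-zag of maps between operadic Hochschild cohomologies, passing through the Chevalley--Eilenberg chains $C_*(\mathfrak{d})$ of the dihedral braid Lie algebras. First I would use Theorem \ref{thm:isohoch} (together with the identification $C(\Ass,-)\cong C(\As,-)$) to replace the target, so that the problem becomes the construction of a map $\mathfrak{grt}_1\to H^0(\As,H_*(M_0))$. The Knizhnik--Zamolodchikov connection is a quasi-isomorphism of cooperads $\alpha\colon C^*(\mathfrak{d})\to H^*(M_0)$ (see \ref{sec:KZ}); since, with the weight-completeness conventions of Section 1, the continuous dual is an exact involution interchanging operads and cooperads, dualizing yields a quasi-isomorphism of operads $\alpha^{\vee}\colon H_*(M_0)\to C_*(\mathfrak{d})$ which intertwines the two morphisms from $\As$ (a point one checks in arity two, where all three $\As$-bimodules in sight reduce to the ground field). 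By the invariance of operadic Hochschild cohomology under quasi-isomorphisms of bimodules recorded in \ref{sec:defquism}, $\alpha^{\vee}$ induces an isomorphism $H^0(\As,H_*(M_0))\cong H^0(\As,C_*(\mathfrak{d}))$, so it remains to produce a map $\mathfrak{grt}_1\to H^0(\As,C_*(\mathfrak{d}))$.

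For the latter I would invoke Proposition \ref{prop:grtharr}, which exhibits $\mathfrak{grt}_1$ as a direct summand of $H^0(\As,s\mathfrak{d})$, together with the observation that the inclusion $s\mathfrak{d}\hookrightarrow C_*(\mathfrak{d})=\hat{S}(s\mathfrak{d})$ of the linear (symmetric-degree one) part is a morphism of differential graded infinitesimal $\As$-bimodules. The verification is that the bimodule structure on $C_*(\mathfrak{d})$ is $C_*(-)=\hat{S}(s(-))$ applied to the operad structure on $\mathfrak{d}$ (as an operad in Lie algebras), hence preserves symmetric degree and restricts in degree one precisely to the triangle-gluing actions of Proposition \ref{prop:grtharr}; and the inclusion is a chain map because the Chevalley--Eilenberg differential annihilates symmetric degree one. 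This yields $H^0(\As,s\mathfrak{d})\to H^0(\As,C_*(\mathfrak{d}))$; restricting to the summand $\mathfrak{grt}_1$ and composing with the inverses of the two isomorphisms above produces the map $\mathfrak{grt}_1\to H^0(\As,H_*(M^{\delta}))$.

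To justify writing this map as $\alpha_{\reg}^{\vee}$, I would unwind the isomorphism of Theorem \ref{thm:isohoch}, which is built from the residual weight filtration (\ref{sec:resfiltration}), the associated graded cooperad $\gr{}H^*(M_0)$ (\ref{sec:regcoop}), and the retraction $\reg$, via the split inclusion $D(\As,H^*(M^{\delta}))\hookrightarrow D(\As,\gr{}H^*(M_0))\xrightarrow{\reg}D(\As,H^*(M^{\delta}))$ of the passage preceding that theorem. Tracing through, one finds that on degree-zero cohomology the whole composite is induced---at the cochain level, modulo the identification $H^*(M_0)\cong\gr{}H^*(M_0)$---by the dual $\alpha_{\reg}^{\vee}\colon H_*(M^{\delta})\to C_*(\mathfrak{d})$ of the regularised Knizhnik--Zamolodchikov connection of \ref{sec:regKZ}, whence the name.

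I do not expect a serious obstacle at any single step: the substance lies in the coherence of the bookkeeping, and above all in the fact that, although $\alpha_{\reg}$ is neither a morphism of cooperads nor a connection, it nonetheless induces a well-defined map on degree-zero cohomology---which is precisely what the filtration machinery of \ref{sec:regcoop}--\ref{sec:regKZ} together with Theorem \ref{thm:isohoch} is designed to guarantee. The genuinely hard statement, that $\alpha_{\reg}^{\vee}$ is an \emph{isomorphism} onto $H^0(\As,H_*(M^{\delta}))$ up to the two extra classes, is not attempted here; it is the business of Sections 4 and 5.
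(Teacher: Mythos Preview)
Your proposal is correct and follows essentially the same approach as the paper: identify $\mathfrak{grt}_1$ inside $H^0(\As,s\mathfrak{d})$ via Proposition~\ref{prop:grtharr}, include $s\mathfrak{d}\hookrightarrow C_*(\mathfrak{d})$ as a sub-bimodule, use the KZ quasi-isomorphism $\alpha$ to pass to $H_*(M_0)$, and then invoke Theorem~\ref{thm:isohoch} to land in $H^0(\As,H_*(M^{\delta}))$. You have in fact supplied more detail than the paper does---in particular the verification that $s\mathfrak{d}\hookrightarrow C_*(\mathfrak{d})$ is a map of differential graded $\As$-bimodules, and the unwinding that justifies the name $\alpha_{\reg}^{\vee}$---and your reading of the paper's slightly terse proof (where the displayed map labelled $\alpha_{\reg}$ is really induced by $\alpha$, with $\reg$ entering only through Theorem~\ref{thm:isohoch}) is the right one.
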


\begin{proof}
By \ref{prop:grtharr}, the map \(\psi(x,y)\mapsto \psi(\delta_{12},\delta_{23})\) gives an isomorphism
\[
 \Q \oplus \mathfrak{grt}_1 \to H^0(\As, s\mathfrak{d}).
\]
We have a subcomplex inclusion \(C(\As, s\mathfrak{d})\to C(\As, C_*(\mathfrak{d}))\) and a quasi-isomorphism
\[
\alpha_{\reg} : D(\As, C^*(\mathfrak{d})) \to D(\As, H^*(M_0)).
\]
On taking cohomology this map can by \ref{thm:isohoch} be taken to land in the space of classes living on the dihedral moduli space. 
\end{proof}

\subsection{The zeta of two class}

Observe that
\begin{align*}
  H^2(M^{\delta}(4)) &= \, \Q\,
  \begin{tikzpicture}[font=\scriptsize, baseline={([yshift=-.5ex]current bounding box.center)}]
    \node[regular polygon, shape border rotate=36,
          regular polygon sides=5, minimum size=.75cm, draw] at (0,0) (A) {};
    \path[thick] (A.corner 1) edge node[auto] {} (A.corner 4)
                 (A.corner 2) edge node[auto] {} (A.corner 5);
  \end{tikzpicture}\,, \\
  H^2(M^{\delta}(5)) &= \,\Q\biggl\{\,
  \begin{tikzpicture}[font=\scriptsize, baseline={([yshift=-.6ex]current bounding box.center)}]
    \node[regular polygon, shape border rotate=60,
          regular polygon sides=6, minimum size=.75cm, draw] at (0,0) (A) {};
    \path[thick] (A.corner 2) edge node[auto] {} (A.corner 5)
                 (A.corner 3) edge node[auto] {} (A.corner 6);
  \end{tikzpicture}
  \,,\,
  \begin{tikzpicture}[font=\scriptsize, baseline={([yshift=-.85ex]current bounding box.center)}]
    \node[regular polygon, shape border rotate=60,
          regular polygon sides=6, minimum size=.75cm, draw] at (0,0) (A) {};
    \path[thick] (A.corner 1) edge node[auto] {} (A.corner 5)
                 (A.corner 3) edge node[auto] {} (A.corner 6);
  \end{tikzpicture}
  \,,\,
  \begin{tikzpicture}[font=\scriptsize, baseline={([yshift=-.85ex]current bounding box.center)}]
    \node[regular polygon, shape border rotate=60,
          regular polygon sides=6, minimum size=.75cm, draw] at (0,0) (A) {};
    \path[thick] (A.corner 1) edge node[auto] {} (A.corner 5)
                 (A.corner 2) edge node[auto] {} (A.corner 6);
  \end{tikzpicture}
  \,,\,
  \begin{tikzpicture}[font=\scriptsize, baseline={([yshift=-.85ex]current bounding box.center)}]
    \node[regular polygon, shape border rotate=60,
          regular polygon sides=6, minimum size=.75cm, draw] at (0,0) (A) {};
    \path[thick] (A.corner 1) edge node[auto] {} (A.corner 4)
                 (A.corner 2) edge node[auto] {} (A.corner 6);
  \end{tikzpicture}
  \,,\,
  \begin{tikzpicture}[font=\scriptsize, baseline={([yshift=-.6ex]current bounding box.center)}]
    \node[regular polygon, shape border rotate=60,
          regular polygon sides=6, minimum size=.75cm, draw] at (0,0) (A) {};
    \path[thick] (A.corner 1) edge node[auto] {} (A.corner 4)
                 (A.corner 2) edge node[auto] {} (A.corner 5);
  \end{tikzpicture}\,\biggr\}.
\end{align*}
All five differential forms displayed on the second line are easily verified to be cocycles for the corner cutting differential of the dual Hochschild cochain complex. In particular, the pentagon diagram shown on the first line is a nontrivial class in the degree zero Hochschild cohomology.

\subsubsection{Lie algebra interpretation}

\(\pent = \delta^\vee_{13}\wedge \delta^\vee_{24}\) is not a cocycle for the corner cutting differential when considered in the cooperad \(C^*(\mathfrak{d})\) of Chevalley-Eilenberg cochains of the dihedral Lie algebra. Instead
\[
  \dr\,
  \begin{tikzpicture}[font=\scriptsize, baseline={([yshift=-.5ex]current bounding box.center)}]
    \node[regular polygon, shape border rotate=36,
          regular polygon sides=5, minimum size=.6cm, draw] at (5*2,0) (A) {};
    \path[thick] (A.corner 1) edge node[auto] {} (A.corner 4)
                 (A.corner 2) edge node[auto] {} (A.corner 5);
  \end{tikzpicture}
  \,=\, -\,
  \begin{tikzpicture}[font=\scriptsize, baseline={([yshift=-.5ex]current bounding box.center)}]
    \node[regular polygon, shape border rotate=0,
          regular polygon sides=4, minimum size=.6cm, draw] at (5*2,0) (A) {};
    \path[thick] (A.corner 1) edge node[auto] {} (A.corner 3)
                 (A.corner 2) edge node[auto] {} (A.corner 4);
  \end{tikzpicture}
  \,=\,
  - \delta^\vee_{12}\wedge \delta^\vee_{23}
  \,=\, \dr_{CE}\bigl(-[\delta_{12},\delta_{23}]^{\vee}\bigr).
\]
Thus \((\dr + \dr_{CE})([\delta_{12},\delta_{23}]^{\vee} + \pent) = 0\). By repeating the arguments in the proof of proposition \ref{prop:grtharr}, one sees that \(H^0(\Ass, s\mathfrak{d})\) consists of all formal Lie series \(\psi\in\Lie\bbrackets{x,y}\) that satisfy the pentagon equation (neither of the two symmetry equations is enforced). We conclude that \(\psi = [x,y]\) is the representative of \(\pent\in H^0(\As,H_*(M^{\delta}))\).

It is easily seen that \([x,y]\) satisfies the pentagon equation but neither of the two symmetry equations of \(\mathfrak{grt}_1\). In particular, the pentagon class does not come from an element of the Grothendieck-Teichm\"uller Lie algebra (via the regularised Knizhnik-Zamolodchikov connection), so we conclude that we have a map
\[
  \Q\triang \oplus\Q\pent\oplus\mathfrak{grt}_1 \rightarrow H^0(\As,H_*(M^{\delta})).
\]
This is consistent with \ref{thm:furusho}. Our main goal in this paper is to show that this map is an isomorphism.

\subsubsection{Period integral interpretation}

For every top degree differential form \(\gamma\in H^{n-2}(M^{\delta}(n))\), one has a convergent period integral
\[
  \int_{0=z_1 < z_2 <\dots < z_n=1} \gamma.
\]
A remarkable theorem by Brown \cite{Brown09} says:

\begin{thm}\label{thm:brownperiods}
Any period integral as displayed above is a rational linear combination of multiple zeta values of weight \(n-2\) and all multiple zeta values of weight \(n-2\) appear as such an integral.
\end{thm}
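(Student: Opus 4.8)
The plan is to follow Brown's argument in \cite{Brown09}; since it motivates several of the constructions above, I indicate its shape. Fix coordinates on \(M_0(n)\) so that the integration cell is the open curvilinear simplex
\[
  X_n = \{\, 0 = z_1 < z_2 < \dots < z_{n-1} < z_n = 1 < z_{n+1} = \infty \,\},
\]
with free coordinates \(z_2,\dots,z_{n-1}\). Its closure \(\overline{X}_n\subset M^{\delta}(n)(\R)\) is a compact polytope --- combinatorially an associahedron --- whose facets are precisely the boundary divisors of \(M^{\delta}(n)\), indexed by the chords \(c\in\chi_1(n)\).

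For the statement that every multiple zeta value of weight \(n-2\) occurs, I would invoke Kontsevich's iterated-integral formula: for \(s_1\geq 2\) with \(s_1+\dots+s_k=n-2\),
\[
  \zeta(s_1,\dots,s_k)=\int_{0<t_1<\dots<t_{n-2}<1}\omega_{\epsilon_1}(t_1)\wedge\dots\wedge\omega_{\epsilon_{n-2}}(t_{n-2}),
\]
where \(\omega_0=\dr t/t\), \(\omega_1=\dr t/(1-t)\) and \((\epsilon_i)\) is the admissible word of \(\zeta(s_1,\dots,s_k)\). Under the substitution \(z_{i+1}=t_i\) one has \(\omega_0(t_i)=\dr\log(z_{i+1}-z_1)\) and \(\omega_1(t_i)=-\dr\log(z_{i+1}-z_n)\). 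Since each \(u_c\) is a cross-ratio, \(\dr\log u_c\) is an integral combination of the forms \(\dr\log(z_a-z_b)\); a dimension count (\(\#\chi_1(n)=\dim H^1(M_0(n))\), both sides being bases of the first cohomology of the braid-arrangement complement) shows the two spans coincide, so the integrand above is a top-degree logarithmic form on \(M^{\delta}(n)\). Being of top degree it is closed, and admissibility of \((\epsilon_i)\) is exactly the vanishing of its residues along the facets of \(\overline{X}_n\); hence it defines a class \(\gamma\in H^{n-2}(M^{\delta}(n))\) whose period over \(X_n\) is the prescribed multiple zeta value.

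For the converse --- that every such period is a \(\Q\)-linear combination of multiple zeta values of weight \(n-2\) --- I would induct on \(n\). By Lemma \ref{lem:chordbasis} it suffices to treat \(\gamma=\alpha_{c_1}\wedge\dots\wedge\alpha_{c_{n-2}}\) a prime gravity chord diagram of top degree, with \(\alpha_c=\dr\log u_c\). The cell \(X_n\) fibres over \(X_{n-1}\) along the point-forgetting projection \(\pi\colon M_0(n)\to M_0(n-1)\) for a suitable interior marked point, with one-dimensional fibres; expanding the \(\dr\log u_c\) into the \(\dr\log(z_a-z_b)\) and integrating out the fibre variable by partial fractions yields a \(\Q\)-linear combination of multiple polylogarithms in the remaining \(z_i\), evaluated at the fibre endpoints \(0\) and \(1\). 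At \(1\) these are multiple zeta values, and primality of \(\gamma\) is precisely what rules out the divergent ones. Multiplying by the surviving one-forms gives a period of a relative cohomology of \(M^{\delta}(n-1)\) of the same shape with the polylogarithm weight adjoined; the induction (base case \(n=2\), where \(M^{\delta}(2)\) is a point and the period is a rational number of weight \(0\)), together with a weight count of one unit per one-form \(\alpha_{c_i}\), then gives the claim.

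The hard part will be this last step: showing that the iterated fibre integrals stay inside the \(\Q\)-algebra of multiple zeta values --- no powers of \(2\pi i\), no polylogarithm values at other arguments, and exactly weight \(n-2\). This is the technical core of \cite{Brown09}; it rests on the geometry of the partial compactification \(M^{\delta}\) (the associahedral cell with logarithmic boundary) and on the combinatorics of admissible words, and may be organised either via Brown's explicit reduction algorithm or, mixed-Hodge-theoretically, via the unipotence of the relevant relative cohomology.
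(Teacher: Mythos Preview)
The paper does not prove this theorem; it is quoted as a result of Brown with a citation to \cite{Brown09} and no further argument. Your proposal is therefore not to be compared against a proof in the paper but against Brown's original, and as a sketch of Brown's strategy it is broadly accurate: the ``every MZV occurs'' direction via Kontsevich's iterated integral is straightforward, and the converse via fibration over \(M_0(n-1)\) and induction is indeed Brown's method.

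Two small imprecisions are worth flagging. First, the sentence ``primality of \(\gamma\) is precisely what rules out the divergent ones'' overstates matters: primality (equivalently, \(\gamma\in H^{n-2}(M^{\delta}(n))\)) guarantees convergence of the full integral over \(X_n\), but the intermediate fibre integrals in Brown's algorithm can and do produce divergent polylogarithm terms, which must be handled by a regularisation procedure --- this is a substantial part of the technical work in \cite{Brown09}, not something primality sidesteps. Second, the claim that admissibility of \((\epsilon_i)\) is ``exactly'' the vanishing of residues along all facets of \(\overline{X}_n\) is a little quick; admissibility controls the two facets \(t_1\to 0\) and \(t_{n-2}\to 1\), and one should say a word about why the remaining facets (where consecutive \(t_i\) collide) carry no residue for a form built from \(\omega_0,\omega_1\) in distinct variables. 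Neither point is a genuine gap in the approach, and you are right to defer the heavy lifting to Brown's paper, just as the present paper does.
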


\noindent
The period of \(\triang\) is \(1\) and the period of \(\pent\) is \(\zeta(2)\). A famous conjecture says that the indecomposable quotient of the algebra of multiple zeta values is isomorphic to the dual of the Grothendieck-Teichm\"uller Lie algebra, plus one element corresponding to zeta of two. How all this is related to our calculation of \(H^0(\As, H_*(M^{\delta}))\) is discussed at greater length in \ref{sec:QZ}.

\section{Intermission}

We will in the next section prove that \(\Q\triang \oplus\Q\pent\oplus\mathfrak{grt}_1 \rightarrow H^0(\As,H_*(M^{\delta}))\) is an isomorphism, but before doing so take a detour through preliminiaries that will feature in our argument. All of the results in this section are already known in all essential detail, scattered in the literaure, so we will allow ourselves to be brief.

\subsection{The Batalin-Vilkovisky operad}

A \emph{Batalin-Vilkovisky algebra} is a differential graded vector space with three algebraic operations: (i) a differential graded commutative product; (ii) a differential graded Lie bracket of degree \(-1\), and (iii) a nilsquare operator \(\Delta\) of degree \(-1\) such that \([g,h] = \Delta(gh) - \Delta(g)h -(-1)^{\vert g\vert} g\Delta h\). We denote the operad of Batalin-Vilkovisky algebras \(\mathsf{BV}\).

\subsubsection{Spherical ribbon braids}

Let \(\mathfrak{rb}(n)\) denote \emph{the spherical ribbon braid Lie algebra} on \(n+1\) strands. (Note the mismatch between arity and number of strands.) It is the Lie algebra of formal Lie series in the symbols \(b_{ij}\) and \(s_k\), \(1\leq i, j, k\leq n+1\), modulo the linear relations \(b_{ii}=0\), \(b_{ji}=b_{ij}\) and
\[
  2s_k + \sum_{i=1}^{n+1} b_{ik} = 0\;\;\;\text{for all}\ k=1,\dots, n+1,
\]
and the bracket relations
\begin{align*}
  [s_k\,,\,\text{anything}] &= 0\;\;\;\text{for all}\ k=1,\dots, n+1; \\
  [b_{ij}, b_{kl}] &= 0\;\;\;\text{if}\ \#\{i,j,k,l\}=4.
\end{align*}
Note that one may use the linear relations to eliminate all \(b_{i\,n+1}\) and \(s_{n+1}\).

The collection of spherical ribbon braid Lie algebras forms an operad of graded Lie algebras. Explicit formulas are somewhat involved but can be given a more transparent explanation by graphical means, something we turn to in the next subsection.

The operad of Batalin-Vilkovisky algebras, \(\mathsf{BV}\), is isomorphic to the operad \(H_*(\mathfrak{rb})\) of Chevalley-Eilenberg homologies of the spherical ribbon braids. The commutative product is given by the class \(1\in H_0(\mathfrak{rb}(2))\), the Lie bracket operation by \(b_{12}\in H_{-1}(\mathfrak{rb}(2))\) and the unary operation \(\Delta\) corresponds to the class \(s_1\in H_{-1}(\mathfrak{rb}(1))\).

\subsubsection{Programmatic remark}

The collection of spherical ribbon braid Lie algebras is a module for the commutative operad. Arguing as in \ref{sec:dihcom} quickly leads to the conclusion that \(\mathfrak{grt}_1 \subset H^0(\As,\susp{}\mathfrak{rb})\). In fact, \(\mathfrak{rb}\) is a symmetric operad, and we have \(\mathfrak{grt}_1 \subset H^0(\Com,\susp{}\mathfrak{rb})\).

\begin{prop}\label{prop:openHarr}
\(\Q^{\oplus 2}\oplus\mathfrak{grt}_1\subset H^0(\Com, \mathsf{BV}) \).
\end{prop}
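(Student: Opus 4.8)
The plan is to run the argument of \ref{prop:grtharr}--\ref{thm:grtinj} in the symmetric, ``framed'' setting: the dihedral moduli spaces become the framed little $2$-disks, $\mathfrak{d}$ becomes $\mathfrak{rb}$, and the Knizhnik--Zamolodchikov quasi-isomorphism becomes the rational formality of the framed little disks. First I would compute $H^0(\Com,\susp{}\mathfrak{rb})$, refining the programmatic remark in the manner of \ref{prop:grtharr}. A chain of internal degree $-1$ in arity $n$ contributes to complex-degree $n-3$ of $C(\Com,\susp{}\mathfrak{rb})$, so its degree-zero part sits entirely in arity $3$ and $H^0(\Com,\susp{}\mathfrak{rb})$ is the space of $\mathsf{coLie}$-symmetrised cocycles $\psi\in\mathfrak{rb}(3)$ --- and the cocycle equation is again the pentagon equation. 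As a Lie algebra $\mathfrak{rb}(3)$ is, up to a central abelian summand carrying the framing generators $s_k$, the free Lie algebra $\Lie\bbrackets{x,y}$; so the pentagon equation together with the two symmetry equations (which the Harrison, i.e.\ $e^1$, truncation enforces, exactly as in \ref{sec:dihcom}) cuts out $\mathfrak{grt}_1$ on the free part and a two-dimensional space of solutions on the abelian part. This gives $H^0(\Com,\susp{}\mathfrak{rb})\cong\Q^{\oplus 2}\oplus\mathfrak{grt}_1$, the two extra classes being the ``framing rescaling'' and ``Euler'' directions.

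Next, the inclusion of Chevalley--Eilenberg chains of length one is a morphism of infinitesimal $\Com$-bimodules $\susp{}\mathfrak{rb}\hookrightarrow C_*(\mathfrak{rb})$ --- length one lies in the kernel of the length-lowering CE differential and is preserved by the Harrison differential --- hence a subcomplex inclusion $C(\Com,\susp{}\mathfrak{rb})\hookrightarrow C(\Com,C_*(\mathfrak{rb}))$. By the rational formality of the framed little $2$-disks operad (Giansiracusa--Salvatore, and Khoroshkin--Willwacher, building on the formality of the little disks of Kontsevich \cite{Kontsevich99}), $C_*(\mathfrak{rb})$ is connected by a zig-zag of quasi-isomorphisms of operads to its homology operad $H_*(\mathfrak{rb})=\mathsf{BV}$, the latter with zero differential; by \ref{sec:defquism} these induce isomorphisms $H^*(\Com,C_*(\mathfrak{rb}))\cong H^*(\Com,\mathsf{BV})$. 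It therefore remains to prove that $C(\Com,\susp{}\mathfrak{rb})\hookrightarrow C(\Com,C_*(\mathfrak{rb}))$ is injective on $H^0$.

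For this I would filter $C(\Com,C_*(\mathfrak{rb}))$ by CE-length: on the associated graded only the Harrison differential survives, so $E_1$ is the Harrison cohomology of the symmetric powers $S^p(\susp{}\mathfrak{rb})$ and $d_1$ is induced by the length-lowering CE differential. In complex-degree zero the length-$p$ stratum lies in arity $p+2$, so $H^0(\Com,\susp{}\mathfrak{rb})$ is exactly the length-$1$ stratum; nothing leaves it in degree zero for length reasons (the only candidate target is the length-$0$ stratum, whose relevant Harrison cohomology vanishes because the commutative operad is rigid), and the only differentials that could hit it run in from higher-length strata, beginning with $d_1$ from the length-$2$, arity-$4$ Harrison cohomology. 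One checks that these vanish on $\Q^{\oplus 2}\oplus\mathfrak{grt}_1$; equivalently, $\mathfrak{grt}_1$ together with the two framing classes survives to $E_\infty$, which is the framed analogue of Willwacher's theorem \cite{Willwacher15}. Hence $\Q^{\oplus 2}\oplus\mathfrak{grt}_1=H^0(\Com,\susp{}\mathfrak{rb})\hookrightarrow H^0(\Com,C_*(\mathfrak{rb}))\cong H^0(\Com,\mathsf{BV})$.

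The main obstacle is the interlocking pair of inputs above: the rational formality $C_*(\mathfrak{rb})\simeq\mathsf{BV}$ of the framed little $2$-disks --- a deeper statement than the explicit formality of $H^*(M_0)$ used in the dihedral case, as it is not realised by an integrable connection --- together with the control of the higher-CE-length contributions showing that the pentagon cocycles representing $\mathfrak{grt}_1$ and the two framing classes are not coherently exact inside $C_*(\mathfrak{rb})$. Both are known --- which is why this section is an intermission --- but jointly they are the substantive content of the proposition.
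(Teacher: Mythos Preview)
Your filtration by Chevalley--Eilenberg length is exactly the one the paper uses, but the paper handles the crucial injectivity step very differently and more cleanly. You write that ``one checks'' the higher $d_r$ vanish on the length-$1$ classes and invoke ``the framed analogue of Willwacher's theorem''; this defers the substantive content. The paper instead observes that the infinitesimal left $\Com$-action on $s\mathfrak{rb}$ is \emph{free} (because it extends to an action of the unitary operad $\mathsf{uCom}$, so the left actions of $m_2$ are split injections), and then invokes the Tamarkin-type vanishing lemma proved in Section~7 (Lemma~\ref{lem:tamarkin}): for any infinitesimal $\Com$-bimodules with free left action, the Harrison complex of their arity-wise tensor product is acyclic. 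Applied to $S^k(s\mathfrak{rb})$ for $k\geq 2$ this kills the entire $E_1$ page above length~$1$, so the subcomplex inclusion $C(\Com,s\mathfrak{rb})\hookrightarrow C(\Com,C_{<0}(\mathfrak{rb}))$ is a quasi-isomorphism and there is nothing to check about differentials. This is the structural point you are missing.

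There is also a bookkeeping slip. You claim the central part of $\mathfrak{rb}(3)$ contributes a two-dimensional space to $H^0(\Com,s\mathfrak{rb})$, but after imposing the Harrison symmetry $e^1_3$ the span $\Q\{s_1,s_2,s_3\}$ drops to the single line $\Q(s_1-s_3)$, and the centre $t_{12}+t_{13}+t_{23}$ of $\mathfrak{t}(3)$ is killed outright by $e^1_3$. The second copy of $\Q$ in the proposition does not live in $s\mathfrak{rb}$ at all: it comes from the length-$0$ stratum $C_0(\mathfrak{rb})\cong\Com$, whose degree-zero Harrison cohomology is the one-dimensional space of rescalings of $m_2$. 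You treat that stratum only as a (vanishing) target of $d_1$, not as a source of a class. The paper makes this explicit by first splitting $C(\Com,C_*(\mathfrak{rb})) = C(\Com,C_0)\oplus C(\Com,C_{<0})$ before filtering.
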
 

\begin{proof}
We first note that the infinitesimal left action of\, \(\Com\) on \(s\mathfrak{rb}\) is free. Verily, the action extends to an action of the unitary operad \(\mathsf{uCom}\). In more detail, we have projections \(\pi_i:\mathfrak{rb}(n) \to \mathfrak{rb}(n-1)\), analogous to projections that forget a marked point. These projections \(\pi_1\) and \(\pi_n\) are right inverses to the two left actions \(\partial_0\) and \(\partial_{n}\), so that they must be injective.

Note that we have a splitting 
\[
  C(\Com,C_*(\mathfrak{rb})) = C(\Com, C_0(\mathfrak{rb})) \oplus C(\Com, C_{< 0}(\mathfrak{rb})).
\]
The first summand has cohomology \(\Q\) for degree reasons. This class corresponds to the deformation that simply rescales the image of \(m_2\). Next, note \(C_{<0}(\mathfrak{rb}) = \hat{S}^{>0}(s\mathfrak{rb})\) as a collection of graded vector spaces. Filter by tensor degree, so that the associated graded is 
\[
  \bigoplus_{k\geq 1} C(\Com, S^k(s\mathfrak{rb})).
\]
The structure of infintesimal \(\Com\)-bimodule on \(S^k(s\mathfrak{rb})\), using that \(\Com\) is a Hopf operad. It now follows from the above remark that the infinitesimal module-action is free and the ``Tamarkin argument'' lemma \ref{lem:tamarkin} that \(C(\Com, S^k(s\mathfrak{rb}))\) is acyclic for \(k\geq 2\). Thus, the \(E_1\) page reduces to the subcomplex
\[
  C(\Com, s\mathfrak{rb}) \subset C(\Com, C_{<0}(\mathfrak{rb})),
\]
whence the cohomology is concentrated in this subcomplex. Repeating the arguments at the end of \ref{sec:dihcom} then proves the result.
\end{proof}

In the sections that follow we shall argue that the Grothendieck-Teichm\"uller Lie algebra is essentially the whole degree zero cohomology, even after passing to Hochschild instead of Harrison cohomology of \(\mathsf{BV}\). In more detail, we will prove that
\[
  H^0(\Ass,\mathsf{BV}) = 
  \Q\,
  \begin{tikzpicture}[baseline=-0.35ex,shorten >=0pt,auto,node distance=.8cm]
    \node[ve] (w1) {};
    \node[ve] (w2) [right of=w1] {};
  \end{tikzpicture}
  \,\oplus\,
  \Q\,
  \begin{tikzpicture}[baseline=-.1ex,shorten >=0pt,auto,node distance=.7cm]
    \node[ve] (w1) {};
    \node[ve] (w2) [right of=w1] {};
    \node[ve] (w3) [right of=w2] {};
    \node[vi] (z) [above of=w2] {};  
    \path[every node/.style={font=\sffamily\small}]
      (w1) edge (z)
      (w2) edge (z)
      (w3) edge (z); 
  \end{tikzpicture}
  \,\oplus \mathfrak{grt}_1.
\]
Here \(
  \begin{tikzpicture}[baseline=-0.35ex,shorten >=0pt,auto,node distance=.8cm]
    \node[ve] (w1) {};
    \node[ve] (w2) [right of=w1] {};
  \end{tikzpicture}
\) is the class of rescaling the product while \(
  \begin{tikzpicture}[baseline=-.1ex,shorten >=0pt,auto,node distance=.7cm]
    \node[ve] (w1) {};
    \node[ve] (w2) [right of=w1] {};
    \node[ve] (w3) [right of=w2] {};
    \node[vi] (z) [above of=w2] {};  
    \path[every node/.style={font=\sffamily\small}]
      (w1) edge (z)
      (w2) edge (z)
      (w3) edge (z); 
  \end{tikzpicture}
\) is a sibling of the ``zeta of two class'' \(\pent\). We then prove our main theorem by constructing an isomorphism \(H^0(\As,H_*(M^{\delta}))\cong H^0(\As,\mathsf{BV})\).

\subsubsection{Graphs with white vertices}

Define 
\[
  \BVGra = C_*\bigl(\mathfrak{rb}/[\mathfrak{rb},\mathfrak{rb}]\bigr)
\]
to be the collection of Chevalley-Eilenberg complexes on the abelianisation of the ribbon braid Lie algebra(s). Thus, elements are formal series in monomials of the generators \(b_{ij}\) and \(s_k\). We represent such monomials graphically, by drawing a number of white vertices equal to the arity, for each \(b_{ij}\) an edge between vertices \(i\) and \(j\), and for each \(k\) a ``tadpole'' (i.e., a loop edge) at vertex \(k\). In doing this we make use of the linear relations to eliminate all \(b_{i\,n+1}\) and \(s_{n+1}\), so that \(n\) (the arity) vertices suffice.

This collection has an operad structure, by the following rule. Let \(\Gamma\) and \(\Gamma'\) be graphs (monomials). To form \(\Gamma\circ_i\Gamma'\), first delete the vertex \(i\) of \(\Gamma\) and then sum over all ways to reconnect the edges of \(\Gamma\) that previously connected to \(i\) to some vertex of \(\Gamma'\). This prosaic definition fixes everything except the signs. To fix the signs one must describe how to consistently regard the edges of the constituent graphs as ordered (up to an even permutation). We shall omit this detail since we will not need the signs to carry through our arguments. Here is an example:
\begin{align*}
\begin{tikzpicture}[baseline=0.2ex,shorten >=0pt,auto,node distance=1.5cm]
 \node[ve] (w1) {};
 \node[ve] (w2) [right of=w1] {};
 \node[ve] (w3) [right of=w2] {}; 
   \path[every node/.style={font=\sffamily\small}]
    (w1) edge[out=55, in=125] (w2)
    (w2) edge[out=125, in=55, loop] (w2); 
\end{tikzpicture}
\;\circ_2\;
\begin{tikzpicture}[baseline=0.2ex,shorten >=0pt,auto,node distance=1.5cm]
 \node[ve] (w1) {};
 \node[ve] (w2) [right of=w1] {};
\end{tikzpicture}
\; &= \; 
\begin{tikzpicture}[baseline=0.2ex,shorten >=0pt,auto,node distance=1.5cm]
 \node[ve] (w1) {};
 \node[ve] (w2) [right of=w1] {};
 \node[ve] (w3) [right of=w2] {}; 
 \node[ve] (w4) [right of=w3] {};
   \path[every node/.style={font=\sffamily\small}]
    (w1) edge[out=55, in=125] (w2)
    (w2) edge[out=125, in=55, loop] (w2); 
\end{tikzpicture}
\; + \;
\begin{tikzpicture}[baseline=0.2ex,shorten >=0pt,auto,node distance=1.5cm]
 \node[ve] (w1) {};
 \node[ve] (w2) [right of=w1] {};
 \node[ve] (w3) [right of=w2] {}; 
 \node[ve] (w4) [right of=w3] {};
   \path[every node/.style={font=\sffamily\small}]
    (w1) edge[out=55, in=125] (w2)
    (w2) edge[out=55, in=125] (w3); 
\end{tikzpicture}
\; + \;
\begin{tikzpicture}[baseline=0.2ex,shorten >=0pt,auto,node distance=1.5cm]
 \node[ve] (w1) {};
 \node[ve] (w2) [right of=w1] {};
 \node[ve] (w3) [right of=w2] {}; 
 \node[ve] (w4) [right of=w3] {};
   \path[every node/.style={font=\sffamily\small}]
    (w1) edge[out=55, in=125] (w2)
    (w3) edge[out=125, in=55, loop] (w3); 
\end{tikzpicture} \\
 &+ \; 
\begin{tikzpicture}[baseline=0.2ex,shorten >=0pt,auto,node distance=1.5cm]
 \node[ve] (w1) {};
 \node[ve] (w2) [right of=w1] {};
 \node[ve] (w3) [right of=w2] {}; 
 \node[ve] (w4) [right of=w3] {};
   \path[every node/.style={font=\sffamily\small}]
    (w1) edge[out=55, in=125] (w3)
    (w2) edge[out=125, in=55, loop] (w2); 
\end{tikzpicture}
\; + \;
\begin{tikzpicture}[baseline=0.2ex,shorten >=0pt,auto,node distance=1.5cm]
 \node[ve] (w1) {};
 \node[ve] (w2) [right of=w1] {};
 \node[ve] (w3) [right of=w2] {}; 
 \node[ve] (w4) [right of=w3] {};
   \path[every node/.style={font=\sffamily\small}]
    (w1) edge[out=55, in=125] (w3)
    (w2) edge[out=55, in=125] (w3); 
\end{tikzpicture}
\; + \;
\begin{tikzpicture}[baseline=0.2ex,shorten >=0pt,auto,node distance=1.5cm]
 \node[ve] (w1) {};
 \node[ve] (w2) [right of=w1] {};
 \node[ve] (w3) [right of=w2] {}; 
 \node[ve] (w4) [right of=w3] {};
   \path[every node/.style={font=\sffamily\small}]
    (w1) edge[out=55, in=125] (w3)
    (w3) edge[out=125, in=55, loop] (w3); 
\end{tikzpicture}\;.
\end{align*}
We remark that this rule defines the operad structure on spherical ribbon braids.

\subsubsection{Graphs with black and white vertices}

Set
\[
  \mathsf{Tw}\,\BVGra(n) = \prod_{k\geq 0} \susp{-2k}\BVGra\bigl(k+n\bigr)_{\bS_k}.
\]
We depict elements as formal series of graphs with some number \(k\) of black unlabelled vertices and some number \(n\) (the arity) of white labelled vertices. Note that the degree is
\(
  2\#(\text{black vertices}) - \#(\text{edges}),
\)
counting tadpoles as edges. Note also that tadpoles can occur at both black and white vertices. This collection is again an operad, by extending the rule for composition of graphs with only white vertices, e.g.,
\[
  \begin{tikzpicture}[baseline=0.2ex,shorten >=0pt,auto,node distance=1.5cm]
    \node[ve] (w1) {};
    \node[ve] (w2) [right of=w1] {};
    \node[ve] (w3) [right of=w2] {};
    \node[vi] (b1) [above of=w2] {};
    \path[every node/.style={font=\sffamily\small}]
      (b1) edge (w1)
      (b1) edge (w2)
      (b1) edge (w3); 
  \end{tikzpicture}
  \;\circ_2\,
  \begin{tikzpicture}[baseline=0.2ex,shorten >=0pt,auto,node distance=1.5cm]
    \node[ve] (w1) {};
    \node[ve] (w2) [right of=w1] {};
    \path[every node/.style={font=\sffamily\small}]
      (w1) edge[out=55, in=125] (w2);
  \end{tikzpicture}
  \;=\; 
  \begin{tikzpicture}[baseline=0.2ex,shorten >=0pt,auto,node distance=1.5cm]
    \node[ve] (w1) {};
    \node[ve] (w2) [right of=w1] {};
    \node[ve] (w3) [right of=w2] {}; 
    \node[ve] (w4) [right of=w3] {};
    \node[vi] (b1) [above of=w2] {};
    \path[every node/.style={font=\sffamily\small}]
      (b1) edge (w1)
      (b1) edge (w2)
      (b1) edge (w4)
      (w2) edge[out=55, in=125] (w3); 
  \end{tikzpicture}
  \;+\;
  \begin{tikzpicture}[baseline=0.2ex,shorten >=0pt,auto,node distance=1.5cm]
    \node[ve] (w1) {};
    \node[ve] (w2) [right of=w1] {};
    \node[ve] (w3) [right of=w2] {}; 
    \node[ve] (w4) [right of=w3] {};
    \node[vi] (b1) [above of=w3] {};
    \path[every node/.style={font=\sffamily\small}]
      (b1) edge (w1)
      (b1) edge (w3)
      (b1) edge (w4)
      (w2) edge[out=55, in=125] (w3); 
  \end{tikzpicture}\,.
\]
The deformation complex
\[
  \Def\bigl({\textstyle\Sigma^{-1}}\Lie \xrightarrow{0} \BVGra\bigr) = \Map_{\bS}\bigl({\textstyle\Sigma^{-2}}\mathsf{coCom}, \BVGra\bigr)
\]
consists of formal series of graphs with symmetrised vertices; hence in the present context are naturally thought of as graphs with only black vertices. The graph \(
  \begin{tikzpicture}[baseline=-0.35ex,shorten >=0pt,auto,node distance=1cm]
    \node[vi] (w1) {};
    \node[vi] (w2) [right of=w1] {};
    \path[every node/.style={font=\sffamily\small}]
      (w1) edge[out=55, in=125] (w2);
  \end{tikzpicture}
\) is a Maurer-Cartan element. The composition in \(\BVGra\) now also defines a (right, by convention) action
\[
  \mathsf{Tw}\,\BVGra\otimes \Def\bigl(\Sigma\Lie \to \BVGra\bigr) \to 
  \mathsf{Tw}\,\BVGra, \;\Gamma\otimes \gamma \mapsto \Gamma \mathbin{\Cdot[2]}\gamma.
\]
of the deformation complex by operadic derivations, by summing over all ways to compose \(\gamma\) into some black vertex of \(\Gamma\).

For any operad \(O\), the elements \(\gamma\in O(1)\) in arity one act on the operad by operadic derivations via the adjoint action (on a \(\Gamma\in O(n)\), say)
\[
  [\gamma, \Gamma] = \gamma\circ_1\Gamma - (-1)^{\vert\gamma\vert \vert\Gamma\vert} \sum_{i=1}^n \Gamma\circ_i\gamma.
\]
In this way we get a differential
\[
  \partial = \bigl[\,
  \begin{tikzpicture}[baseline=0.35ex,shorten >=0pt,auto,node distance=1cm]
    \node[ve] (w1) {};
    \node[vi] (b1) [above of=w1] {};
    \path[every node/.style={font=\sffamily\small}]
      (b1) edge (w1);
  \end{tikzpicture}
  \,, \;\;\bigr]
  + (\;\;)\mathbin{\Cdot[2]}
  \begin{tikzpicture}[baseline=0.35ex,shorten >=0pt,auto,node distance=1cm]
    \node[vi] (w1) {};
    \node[vi] (b1) [above of=w1] {};
    \path[every node/.style={font=\sffamily\small}]
      (b1) edge (w1);
  \end{tikzpicture}
\]
on \(\mathsf{Tw}\,\BVGra\). The construction of the operad \(\mathsf{Tw}\,\BVGra\) is a special case of the operadic ``twisting'' developed in \cite{Willwacher15}.

Define the \emph{operad of BV graphs} as follows. Consider the subspace of \(\mathsf{Tw}\,\BVGra(n)\) spanned by graphs whose black vertices are at least trivalent. Let \(\BVGraphs(n)\) be the quotient of this subspace by the subspace of graphs with a tadpole at a black vertex. Thus, we can picture elements of the operad of BV graphs as series of graphs with black and white vertices and no tadpoles at a black vertex. The reason we define it as a quotient instead is that it makes the formula for the differential easier to describe succinctly, while still behaving as it should. Namely, note that
\[
  \partial\!\!
  \begin{tikzpicture}[baseline=0ex,shorten >=0pt,auto,node distance=1cm]
    \node[ve] (w) {};
    \path[every node/.style={font=\sffamily\small}]
      (w) edge[out=135, in=45, loop] (w); 
  \end{tikzpicture}
  \!\!=\!\!
  \begin{tikzpicture}[baseline=0.5ex,shorten >=0pt,auto,node distance=1cm]
    \node[ve] (w) {};
    \node[vi] (b) [above of=w1] {};
    \path[every node/.style={font=\sffamily\small}]
      (b) edge (w)
      (b) edge[out=135, in=45, loop] (b); 
  \end{tikzpicture}\!\!\!\in \mathsf{Tw}\,\BVGra,
\]
but to make the following lemma true we need the simple tadpole graph to be a cocycle.

\begin{lem}
The association
\begin{align*}
  \mathsf{BV} &\to \BVGraphs,\\
    \wedge,\ [\,,\,],\ \Delta &\mapsto 
    \begin{tikzpicture}[baseline=-0.35ex,shorten >=0pt,auto,node distance=1cm]
      \node[ve] (w1) {};
      \node[ve] (w2) [right of=w1] {};
    \end{tikzpicture},\
    \begin{tikzpicture}[baseline=-0.35ex,shorten >=0pt,auto,node distance=1cm]
      \node[ve] (w1) {};
      \node[ve] (w2) [right of=w1] {};
      \path[every node/.style={font=\sffamily\small}]
        (w1) edge[out=55, in=125] (w2);
    \end{tikzpicture},\!
    \begin{tikzpicture}[baseline=-0.15ex,shorten >=0pt,auto,node distance=1cm]
      \draw [use as bounding box, transparent, white] (-2mm,-2mm) rectangle (2mm,2mm);
      \node[ve] (w) {};
      \path[every node/.style={font=\sffamily\small}]
        (w) edge[out=135, in=45, loop] (w); 
    \end{tikzpicture}\!,
\end{align*}
is a quasi-isomorphism of operads.
\end{lem}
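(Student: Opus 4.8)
The plan is first to check that the assignment defines a morphism of operads, and then to identify $H_*(\BVGraphs)$ with $\mathsf{BV}$ by reducing, via the tadpole decorations, to Kontsevich's graph model for the little $2$-disks; this last identification is essentially the content of \cite{Willwacher15}, so I would only indicate the argument.

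For well-definedness, note that since $\mathsf{BV}$ carries the zero differential one must check that the three target graphs are cocycles and that they satisfy the Batalin--Vilkovisky relations. The empty graph and the single white edge are cocycles by a short computation with the two summands of $\partial$; the white tadpole is a cocycle precisely because $\partial$ applied to it equals a graph with a tadpole at a black vertex, which vanishes in the quotient defining $\BVGraphs$ --- this being exactly why the operad is defined as that quotient, as remarked just above the statement. The graded-commutative associative relations for the empty graph and the Jacobi and Leibniz relations for the edge graph are routine manipulations of small graphs; the only relation requiring attention is $[g,h] = \Delta(gh) - \Delta(g)\,h - (-1)^{|g|}g\,\Delta(h)$, which reduces to the identity in $\BVGraphs(2)$ that inserting the tadpole graph into the product graph produces the two ``outer'' tadpole terms together with a single edge joining the two white vertices. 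As elsewhere in the paper I would not track the edge-ordering signs.

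For the homology computation, the point is that the $\Delta$-decorations, that is the tadpoles at white vertices --- which square to zero since $s_i\wedge s_i = 0$ --- contribute a factor $H_*(S^1) = \Q\oplus\Q\,\Delta$ at each of the $n$ external vertices and do not otherwise interfere with the differential. Filtering $\BVGraphs(n)$ by the number of white tadpoles, the associated graded complex is $\Graphs(n)\otimes\bigl(\Q\oplus\Q\,\Delta\bigr)^{\otimes n}$, where $\Graphs$ denotes Kontsevich's operad of graphs (white external vertices, at least trivalent black internal vertices, no tadpoles) in the form used in \cite{Willwacher15}. By Kontsevich's graph-level formality of the little $2$-disks \cite{Kontsevich99} the assignment sending the product to the empty graph and the bracket to the single edge is a quasi-isomorphism $\Ger\to\Graphs$, so the spectral sequence of the filtration collapses and yields
\[
  H_*(\BVGraphs)(n)\;\cong\;\Ger(n)\otimes\bigl(\Q\oplus\Q\,\Delta\bigr)^{\otimes n}\;\cong\;\mathsf{BV}(n),
\]
the last isomorphism being the standard description of $\mathsf{BV}$ as the homology of the framed little $2$-disks operad, whose arity-$n$ space is homotopy equivalent to the product of the little $2$-disks with $(S^1)^n$. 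Unwinding the identifications, the map induced on homology by the assignment of the statement is exactly this composite, hence an isomorphism of operads. The only genuinely delicate input is Kontsevich's formality at the level of graph operads; everything else is bookkeeping, the one subtle part of which --- the consistent choice of edge-ordering signs --- I suppress, as in the rest of the paper.
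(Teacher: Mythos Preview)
Your proof is correct and follows essentially the same approach as the paper: reduce to Kontsevich's quasi-isomorphism $\Ger\to\Graphs$ and then use the standard relationship $\mathsf{BV}(n)\cong\Ger(n)\otimes(\Q\oplus\Q\Delta)^{\otimes n}$. The only difference is that the paper asserts a direct identification of complexes $\BVGraphs(n)\cong\Graphs(n)\otimes\Q[s_1,\dots,s_n]$ rather than passing through a filtration; in fact the differential preserves the number of white tadpoles (in the quotient defining $\BVGraphs$ the only surviving term when splitting a white vertex carrying a tadpole keeps the tadpole on the white side), so your spectral sequence degenerates for trivial reasons and the two arguments coincide.
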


\begin{proof}
Let \(\Graphs\) be the suboperad spanned by graphs containing no tadpoles. That the association \(
    \wedge,\ [\,,\,] \mapsto 
    \begin{tikzpicture}[baseline=-0.35ex,shorten >=0pt,auto,node distance=1cm]
      \node[ve] (w1) {};
      \node[ve] (w2) [right of=w1] {};
    \end{tikzpicture},\
    \begin{tikzpicture}[baseline=-0.35ex,shorten >=0pt,auto,node distance=1cm]
      \node[ve] (w1) {};
      \node[ve] (w2) [right of=w1] {};
      \path[every node/.style={font=\sffamily\small}]
        (w1) edge[out=55, in=125] (w2);
    \end{tikzpicture}\,
\) is a quasi-isomorphism \(\Ger\to\Graphs\) from the Gerstenhaber operad was proved by Kontsevich in \cite{Kontsevich99}. Identify \(\BVGraphs(n)\) with the complex \(\Graphs(n)\otimes \Q[s_1,\dots, s_n]\) (each tadpole \(s_k\) of degree one) and use the well-known relationship between the Gerstenhaber and Batalin-Vilkovisky operads.
\end{proof}

\subsection{Hochschild cohomology of the BV operad}

We shall compute the degree one cohomology of the deformation complex of the obvious morphism \(\Ass\to \mathsf{BV}\). To do so we replace the operad of Batalin-Vilkovisky algebras by the quasi-isomorphic operad of Batalin-Vilkovisky graphs. Note first that there is a splitting of \(\Com\)-modules
\[
  \BVGraphs = \Com \oplus \mathscr{I}\BVGraphs,
\]
with \(\Com\) here denoting the collection of graphs without edges. The contribution to the deformation complex cohomology of these one-dimensional summands is easy to deduce. For the other summand, we note that as a collection we can write it as a left free module on a right cofree module,
\[
  \mathscr{I}\BVGraphs = \Com\infcirc X^{\Com},
\]
cf.~\ref{sec:rightmod}. First, every graph with \(n\) white vertices is a graph with some maximal set \(I\subset [n]\) of white vertices that all have valency at least one, so the factor \(X^{\Com}\) in
\[
  \Com\infcirc X^{\Com}(n) = \bigoplus_{I\subset [n]} \Com(n/I)\otimes X^{\Com}(I)
\]
can be understood as the collection of graphs with all white vertices at least univalent. Secondly, every such graph, with say \(k\) white vertices, can be obtained from a graph with no white vertices at all but with some number \(m\geq k\) of labelled ``hairs'' (missing endpoints of edges) together with a partition of \([m]\) into \(k\) parts that defines how to connect the \(m\) hairs to \(k\) white vertices. The domains \(\Com^{\sconv k}(m)\) in
\[
  X^{\Com}(k) = \prod_{m\geq k}\Map_{\bS_m}\!\bigl(\Com^{\sconv k}(m), X(m)\bigr)
\]
are exactly such partitions, so we can take \(X(m)\) to be the space of graphs with only black vertices (of valency at least three) and \(m\) labelled hairs.

Note that we must include the graph
\[\!
  \begin{tikzpicture}[baseline=0.8ex,shorten >=0pt,auto,node distance=1cm]
    \node (w1) {};
    \node (w2) [right of=w1] {};
    \path[every node/.style={font=\sffamily\small}]
      (w1) edge[out=55, in=125] (w2);
  \end{tikzpicture}\!\!\in X(2),
\]
(which has two hairs and one edge) and all graphs containing it as a connected component. Together with the partition \([2]=\{1\}+\{2\}\) this graph without vertices produces \(
  \begin{tikzpicture}[baseline=-0.35ex,shorten >=0pt,auto,node distance=1cm]
    \node[ve] (w1) {};
    \node[ve] (w2) [right of=w1] {};
    \path[every node/.style={font=\sffamily\small}]
      (w1) edge[out=55, in=125] (w2);
  \end{tikzpicture}
\) while coupled with the partition \([2]=\{1,2\}\) it yields the tadpole \(
  \begin{tikzpicture}[baseline=-0ex,shorten >=0pt,auto,node distance=.4cm]
    \draw[use as bounding box, transparent, white] (-2mm,-2mm) rectangle (2mm,2mm);
    \node[ve] (w) {};
    \path[every node/.style={font=\sffamily\small}]
      (w) edge[out=135, in=45, loop] (w); 
  \end{tikzpicture}
\).

Arguing as in \ref{sec:harrcofree} gives an identification as graded vector spaces
\begin{align*}
  \Def(\Ass\to &\mathscr{I}\BVGraphs) \cong\\ &\prod_{m\geq 1}
  \susp{}{\textstyle\mathsf{gr}^{(1,\dots,1)}}\,\Ass\Bbrackets{\susp{-1}\Q[x_1,\dots, x_m]}
  \otimes_{\bS_m}\! X(m).
\end{align*}
The only difference, due to taking Hoschschild instead of Harrison cohomology, is that we consider the free associative algebra \(\Ass\bbrackets{\dots}\) instead of the free Lie algebra \(\Lie\bbrackets{\dots}\), plus the obvious change in grading and inclusion of arity one that is due to our conventions on deformation cohomology versus bimodule cohomology.

The identification \(\mathscr{I}\BVGraphs = \Com\infcirc X^{\Com}\) is almost true as infinitesimal \(\Com\)-bimodules. It holds as infinitesimal left modules. The action \((-)\circ_i m_2\) on the cofree right module \(X^{\Com}\) is given by summing over all ways to partition the \(i\)th subset into two nonempty subsets, which interpreted in \(\BVGraphs\) amounts to splitting the \(i\)th white vertex into two and summing over all ways to reconnect edges \emph{such that neither of the two white vertices is zero-valent}. However, the actual right action on \(\BVGraphs\) does allow all edges to reconnect to just one, creating a zero-valent white vertex. The conclusion is that just the Hochschild part of the differential acts exactly like the non-reduced cobar differential. By the Hochschild-Kostant-Rosenberg theorem the cohomology of the cobar construction is
\[
  H^*\bigl(\Ass\bbrackets{\susp{-1}\Q[x_1,\dots, x_m]}\bigr) = 
  \Q\bbrackets{\susp{-1}x_1,\dots, \susp{-1}x_m}.
\]
Hence, the cohomology of the associated graded for the filtration by the internal grading on Batalin-Vilkovisky graphs is 
\[
  \prod_{n\geq 1} s\hspace{.8pt}\Q\,\susp{-1}x_1\wedge\dots\wedge\susp{-1}x_n \otimes_{\bS_n}\! X(n)
\]
and hence the cohomology of the deformation complex is concentrated in the subcomplex \(\mathit{fC}\) spanned by graphs that have all white vertices of valency exactly one and which are (anti)symmetric under exchange of white vertices. In particular, \emph{this subcomplex contains no graphs with tadpoles}.

\begin{prop}\label{prop:HochBV}
\(H^1(\Def(\As_{\infty}\to\mathit{BV})) =
  \Q\,
  \begin{tikzpicture}[baseline=-.1ex,shorten >=0pt,auto,node distance=.7cm]
    \node[ve] (w1) {};
    \node[ve] (w2) [right of=w1] {};
    \node[ve] (w3) [right of=w2] {};
    \node[vi] (z) [above of=w2] {};  
    \path[every node/.style={font=\sffamily\small}]
      (w1) edge (z)
      (w2) edge (z)
      (w3) edge (z); 
  \end{tikzpicture}
  \,\oplus \mathfrak{grt}_1\).
\end{prop}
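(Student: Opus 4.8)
I would start from everything assembled in the paragraphs above: after replacing $\mathsf{BV}$ by the quasi-isomorphic operad $\BVGraphs$ and splitting off the edgeless graphs, $\BVGraphs = \Com\oplus\mathscr{I}\BVGraphs$, the cohomology of $\Def(\As_\infty\to\mathscr{I}\BVGraphs)$ is carried by the subcomplex $\mathit{fC}$ of graphs whose white vertices are all univalent, which are (anti)symmetric under permutation of the white vertices, and which therefore have no tadpoles. So the plan is: (i) check that the $\Com$-summand contributes nothing in degree one; (ii) identify $\mathit{fC}$ with a graph complex; (iii) read off $H^1(\mathit{fC})$ from Willwacher's theorem together with a short degree count. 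Part (i) is quick: $\Def(\As_\infty\to\Com)$ is the non-reduced Hochschild complex of the trivial operad, with arity $n$ in degree $n-1$; its higher differentials alternate between an isomorphism and zero and the arity-one class bounds the arity-two rescaling cocycle, so it is acyclic, and hence $H^1(\Def(\As_\infty\to\mathsf{BV})) = H^1(\mathit{fC})$.

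For (ii) I would amputate: a univalent white vertex is just a labelled endpoint of an edge, so, remembering them as ``hairs'', $\mathit{fC}(n)$ is the antisymmetrisation over $\bS_n$ of the space $X(n)$ of graphs carrying $n$ labelled hairs and only tadpole-free, at-least-trivalent black vertices, with differential the vertex-splitting operator induced by $\partial$. This is (a degree shift of) the hairy graph complex. Grouping the connected components that contain no hair, $\mathit{fC}$ is, as a complex, the hairy part tensored with a symmetric algebra on Kontsevich's graph complex $\mathsf{GC}_2$ of connected vacuum graphs; the differential on the $\mathsf{GC}_2$-factor is exactly Kontsevich's, so by the quasi-isomorphism $\Ger\xrightarrow{\sim}\Graphs$ of \cite{Kontsevich99} together with Willwacher's theorem \cite{Willwacher15} one has $H^0(\mathsf{GC}_2)=\mathfrak{grt}_1$.

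For (iii) I would note that a total-degree-one element of $\mathit{fC}$ has at most one nontrivial vacuum component, and that component must be a degree-zero $\mathsf{GC}_2$-cocycle, contributing the summand $\mathfrak{grt}_1$; the remaining hairy piece then lies in a single, very low, loop order, and a direct check — graphs with exactly one hair are exact, graphs with one black vertex give only the tripod, and everything else falls outside degree one — shows that the only extra generator surviving to cohomology is the tripod graph displayed in the statement. Combined with the reverse inclusion, which is already available from \ref{prop:openHarr} (via Harrison $\subset$ Hochschild and the degree shift between bimodule and deformation cohomology that removes the rescaling class) together with the fact that the tripod is an explicit, non-exact cocycle, this yields $H^1(\Def(\As_\infty\to\mathsf{BV})) = \Q\,(\text{tripod})\oplus\mathfrak{grt}_1$.

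The main obstacle is precisely steps (ii)--(iii): making the dictionary with the standard (hairy) graph complex precise enough that Willwacher's computation applies verbatim — in particular keeping track of signs, hair parity, and degree shifts so that the vacuum part really is $\mathsf{GC}_2$ in degree zero — and then bounding the hairy part in total degree one tightly enough to be certain that no graph other than the tripod survives. This is the Hochschild-flavoured counterpart of the phenomenon, visible already in the excerpt, that the associative and Lie computations coincide once Hochschild--Kostant--Rosenberg has collapsed $\Ass\bbrackets{\susp{-1}\Q[x_1,\dots,x_m]}$ to the exterior algebra $\Q\bbrackets{\susp{-1}x_1,\dots,\susp{-1}x_m}$; convergence of the spectral sequences involved is not an issue, as the filtrations are complete and, in each fixed arity, bounded.
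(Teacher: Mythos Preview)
Your step (i) is fine, and your identification of $\mathit{fC}$ as the relevant subcomplex matches the paper. But step (iii) contains a real error. You claim that $\mathfrak{grt}_1$ arises from vacuum components (degree-zero $\mathsf{GC}_2$-cocycles) and that ``graphs with exactly one hair are exact''. This is backwards: the $\mathfrak{grt}_1$-classes in $\mathit{fC}$ are represented precisely by connected graphs with a \emph{single} hair (equivalently, a single univalent white vertex), as the paper states explicitly in the paragraph on the Hodge-type grading immediately following the proposition. So the one-hair part is not exact; it carries all of $\mathfrak{grt}_1$. Your vacuum-times-hairy K\"unneth picture would instead produce classes of the form $(\text{vacuum }\mathfrak{grt}_1\text{-class})\otimes(\text{tripod})$, which is not what the summand $\mathfrak{grt}_1$ in the statement means. (Also, the aside that $\Ger\xrightarrow{\sim}\Graphs$ ``together with Willwacher's theorem'' gives $H^0(\mathsf{GC}_2)=\mathfrak{grt}_1$ conflates two independent results; Kontsevich's formality does not by itself compute $H^0(\mathsf{GC}_2)$.)

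The paper does not attempt to rederive the cohomology of $\mathit{fC}$. Its proof is a one-line reduction: Turchin and Willwacher \cite{TurchinWillwacher18} already proved the identical statement with $\Ger$ in place of $\mathsf{BV}$, replacing $\Ger$ by $\Graphs$ and reducing to the \emph{same} subcomplex $\mathit{fC}$ (since $\mathit{fC}$ contains no tadpoles, the passage from $\BVGraphs$ to $\Graphs$ is irrelevant at this stage). The whole content of the proposition, given the preceding reduction, is therefore that the tadpoles have disappeared and one may quote \cite{TurchinWillwacher18} verbatim. If you want to avoid that citation and argue directly, you would need to redo the genuinely hard part of Turchin--Willwacher's analysis of the hairy graph complex---in particular, the identification of the one-hair cohomology with $\mathfrak{grt}_1$---rather than the vacuum-component shortcut you sketch.
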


\begin{proof}
Turchin and Willwacher prove (in \cite{TurchinWillwacher18}) the same statement but with \(\mathit{BV}\) replaced by the operad \(\Ger\) of Gerstenhaber algebras. Their proof proceeds as follows. First of all they replace the operad of Gerstenhaber algebras by Kontsevich's operad \(\Graphs\), which can be thought of as the suboperad of \(\BVGraphs\) that is spanned by graphs without tadpoles. They then follow arguments similar to those outlined above to reduce to a subcomplex \(\mathit{fC}\)--which is isomorphic to our complex \(\mathit{fC}\), above.
\end{proof}

\noindent
In terms of bimodule cohomology, \(
H^0(\Ass,\mathsf{BV}) =
  \Q\,
    \begin{tikzpicture}[baseline=-0.35ex,shorten >=0pt,auto,node distance=1cm]
      \node[ve] (w1) {};
      \node[ve] (w2) [right of=w1] {};
    \end{tikzpicture}
  \,\oplus\Q\,
  \begin{tikzpicture}[baseline=-.1ex,shorten >=0pt,auto,node distance=.7cm]
    \node[ve] (w1) {};
    \node[ve] (w2) [right of=w1] {};
    \node[ve] (w3) [right of=w2] {};
    \node[vi] (z) [above of=w2] {};  
    \path[every node/.style={font=\sffamily\small}]
      (w1) edge (z)
      (w2) edge (z)
      (w3) edge (z); 
  \end{tikzpicture}
  \,\oplus \mathfrak{grt}_1
\). That is to say, removing arity one from the deformation complex introduces the class \(
  \begin{tikzpicture}[baseline=-0.35ex,shorten >=0pt,auto,node distance=1cm]
    \node[ve] (w1) {};
    \node[ve] (w2) [right of=w1] {};
  \end{tikzpicture}
\) and changes the representative of every graph representing a Grothendieck-Teichm\"uller element, but can not make them exact.

\subsubsection{The Hodge-type grading}

Recall from \ref{sec:HarrisonHodge} the Hodge grading on the deformation complex of the morphism \(\Ass\to\BVGraphs\). Since the Eulerian idempotent \(e^{\hspace{.5pt}n}_n\) is projection onto the totally antisymmetric part, the Hodge degree of a graph in the complex
\[
  \mathit{fC} \cong \prod_{n\geq 1} s\hspace{.8pt}\Q\,\susp{-1}x_1\wedge\dots\wedge\susp{-1}x_n \otimes_{\bS_n} \!X(n),
\]
discussed at the end of the preceding subsection, is equal to the number \(n\) of white vertices. The classes corresponding to Grothendieck-Teichm\"uller elements are all represented by graphs with a single white univalent vertex (as argued in \cite{Willwacher15} and \cite{TurchinWillwacher18}), so these are in Hodge degree one. Indeed, we have already interpreted the Grothendieck-Teichm\"uller Lie algebra as a Harrison cohomology. The only other Hochshchild cohomology class, represented by \(
  \begin{tikzpicture}[baseline=-.1ex,shorten >=0pt,auto,node distance=.7cm]
    \node[ve] (w1) {};
    \node[ve] (w2) [right of=w1] {};
    \node[ve] (w3) [right of=w2] {};
    \node[vi] (z) [above of=w2] {};  
    \path[every node/.style={font=\sffamily\small}]
      (w1) edge (z)
      (w2) edge (z)
      (w3) edge (z); 
  \end{tikzpicture}
\), has Hodge degree three.

\section{Proof of the main theorem}

This section contains the proof of our main result:

\begin{thm*}
The regularised Knizhnik-Zamolodchikov connection induces an isomorphism between \(\Q\triang\oplus\Q\pent\oplus\mathfrak{grt}_1\) and the degree zero Hochschild cohomology of the homology operad of Brown's dihedral moduli spaces.
\end{thm*}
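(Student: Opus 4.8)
The strategy is to show that the injection already in hand is onto by identifying its target, up to a weight--preserving isomorphism, with the degree zero bimodule cohomology $H^0(\As,\mathsf{BV})$ computed in Section~4. By Theorem~\ref{thm:grtinj} the regularised Knizhnik--Zamolodchikov connection (Section~\ref{sec:regKZ}) induces a map $\mathfrak{grt}_1\to H^0(\As,H_*(M^\delta))$, and the discussion of the zeta-of-two class upgrades this to an injection
\[
  \Q\triang\oplus\Q\pent\oplus\mathfrak{grt}_1\hookrightarrow H^0(\As,H_*(M^\delta)),
\]
the classes $\triang$ and $\pent$ being represented by the empty triangle and pentagon chord diagrams; these are genuinely new since the Lie series $[x,y]$ representing $\pent$ satisfies the pentagon equation but neither symmetry equation (cf.~Theorem~\ref{thm:furusho}). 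Domain and target carry a weight grading with finite--dimensional weight components, preserved by all the comparison maps below, and $\Q\triang\oplus\Q\pent\oplus\mathfrak{grt}_1$ and $\Q\oplus\Q\oplus\mathfrak{grt}_1$ have the same dimension in each weight (the two extra classes sitting in weights $0$ and $2$); hence it suffices to produce \emph{any} weight--preserving isomorphism $H^0(\As,H_*(M^\delta))\cong\Q\oplus\Q\oplus\mathfrak{grt}_1$, for then the displayed injection is forced to be onto weight by weight.

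The first reduction passes from Brown's spaces to the dihedral braid Lie algebras. By Theorem~\ref{thm:isohoch}, $H^0(\As,H_*(M^\delta))\cong H^0(\Ass,H_*(M_0))$. The Knizhnik--Zamolodchikov connection (Section~\ref{sec:KZ}) is a quasi-isomorphism of cooperads of differential graded commutative algebras $\alpha\colon C^*(\mathfrak{d})\to H^*(M_0)$, and since the (dual) Hochschild--complex functor sends quasi-isomorphisms of infinitesimal bimodules to quasi-isomorphisms (\ref{sec:defquism}), this identifies the above with $H^0(\As,C_*(\mathfrak{d}))$, the degree zero Hochschild cohomology of the collection of Chevalley--Eilenberg chain complexes of the dihedral braid Lie algebras. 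By Proposition~\ref{prop:grtharr} and its refinement in the zeta-of-two discussion, the sub-bimodule $s\mathfrak{d}\subset C_*(\mathfrak{d})$ of primitives already accounts for $\mathfrak{grt}_1$ and the pentagon class; the content of the theorem is that higher Chevalley--Eilenberg degrees contribute nothing further to $H^0$.

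The heart of the proof is the comparison with the Batalin--Vilkovisky operad, that is, the construction of a weight--preserving isomorphism $H^0(\As,C_*(\mathfrak{d}))\cong H^0(\As,\mathsf{BV})$. The collection $\mathfrak{d}$ of dihedral (equivalently spherical) braid Lie algebras is precisely the quotient of the collection $\mathfrak{rb}$ of spherical ribbon braid Lie algebras by the central framing generators $s_k$, and $\mathsf{BV}=H_*(\mathfrak{rb})$. On the $\mathfrak{rb}$-side one has the reduction already run in Section~4 and in the proof of Proposition~\ref{prop:openHarr}: replace $\mathsf{BV}$ by the quasi-isomorphic graph operad $\BVGraphs$, filter the Hochschild complex by Chevalley--Eilenberg length, use freeness of the infinitesimal left module action on $s\mathfrak{rb}$ together with Tamarkin-type acyclicity (Lemma~\ref{lem:tamarkin}) to discard all symmetric powers of $s\mathfrak{rb}$ of degree $\ge 2$, and be left with the subcomplex $\mathit{fC}$. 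Crucially $\mathit{fC}$ contains no tadpoles, hence none of the framing data distinguishing $\mathfrak{rb}$ from $\mathfrak{d}$; I would then carry out the parallel reduction on the dihedral side, through the Knizhnik--Zamolodchikov quasi-isomorphism and the analogous (nonsymmetric) graph model, and check that it lands in an isomorphic reduced complex. Proposition~\ref{prop:HochBV}, in bimodule form, evaluates $H^0(\As,\mathsf{BV})=\Q\oplus\Q\oplus\mathfrak{grt}_1$, and the weight count of the first paragraph completes the proof, with $\triang$, $\pent$, $\mathfrak{grt}_1$ matching the product-rescaling class, its BV sibling, and $\mathfrak{grt}_1$ respectively.

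The main obstacle is exactly this bridge $H^0(\As,C_*(\mathfrak{d}))\cong H^0(\As,\mathsf{BV})$. One must adapt the twisting and graph-complex machinery of Section~4 — developed symmetrically for Harrison-type (Chevalley--Eilenberg over $\Com$) cohomology — to the nonsymmetric, Hochschild-type setting of the dihedral operad, and one must track how the framing generators $s_k$, whose interaction with the operadic composition was already flagged as ``somewhat involved,'' drop out of the reduced complex, so that the dihedral and ribbon reductions genuinely coincide. Verifying in addition that the resulting isomorphism is compatible with the two a priori distinct constructions of the Grothendieck--Teichmüller classes — via $\alpha_{\reg}$ on the moduli side and via the graph complex on the $\mathsf{BV}$ side — is not logically required for the bare isomorphism statement, but is what makes the identification canonical, and it should follow from the explicit form of the comparison maps.
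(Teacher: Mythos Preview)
Your overall shape is right---reduce to showing \(H^0(\As,H_*(M^\delta))\cong H^0(\As,\mathsf{BV})\) and invoke Proposition~\ref{prop:HochBV}---but the bridge you propose is not the one the paper builds, and the one you sketch has a real gap. You suggest running ``the parallel reduction on the dihedral side, through \dots\ the analogous (nonsymmetric) graph model'' and then checking the two reduced complexes coincide. No such dihedral graph model is set up in the paper, and it is not obvious one exists with the right properties; the dihedral braid relations are not simply the ribbon ones with the \(s_k\) removed (there is also the spherical relation \(\sum p_{ik}=0\)), so the claim that the reduced complexes ``genuinely coincide'' needs an argument you have not supplied. Your honest admission that this bridge is ``the main obstacle'' is accurate, but the proposal does not cross it.

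The paper avoids this entirely by writing down two explicit Lie-algebra maps, \(\pi:\mathfrak{rb}\to\mathfrak{d}\) (kill the \(s_k\), then use \(\mathfrak{p}\cong\mathfrak{d}\)) and a section \(\gamma:\mathfrak{d}\to\mathfrak{rb}\) with \(\pi\circ\gamma=\id\). The crucial and nontrivial input is that \(\gamma\) is a morphism of \emph{operads} of Lie algebras (\S\ref{sec:gamma}), so it induces \(\gamma:H^0(\As,H_*(M_0))\to H^0(\As,\mathsf{BV})\) directly; \(\pi\) is only an infinitesimal \(\Com\)-bimodule map after projecting to \(H_*(M^\delta)\). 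One then assembles the square
\[
\begin{tikzcd}
 & H^0(\As,\mathsf{BV})\ar{r}{\pi^{\delta}} & H^0(\As, H_*(M^{\delta})) \ar{dd}{\reg^{\vee}} \\
\Q^{\oplus 2}\oplus\mathfrak{grt}_1\ar{ur}{\cong}\ar{dr}{} & \\
 & H^0(\As, H_*(M_0))\ar{uu}{\gamma} & H^0(\As,\gr{}H_*(M_0)) \ar{l}{\cong}
\end{tikzcd}
\]
checks the left triangle commutes (because \(\gamma(\psi(\delta_{12},\delta_{23}))=\psi(b_{12},b_{23})\), the \(s_k\) being central), and then observes that the composite around the square starting at the top right is the identity, since \(\pi\circ\gamma=\id\) and \(\reg\) splits the inclusion on the associated graded. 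This forces \(\gamma\) and \(\pi^\delta\) to be mutually inverse isomorphisms---no weight-by-weight dimension count is needed. What your approach would buy, if it could be made to work, is an intrinsic computation on the dihedral side; what the paper's approach buys is that the comparison is a two-line diagram chase once \(\gamma\) is known to be operadic.
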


\subsection{Lie algebra morphisms}

Define (w.l.o.g, \(1\leq i < j\leq n\))
\[
  \pi: \mathfrak{rb}(n) \to \mathfrak{d}(n),\;\left\{
  \begin{aligned}
    \, b_{ij} &\mapsto \delta_{i\,j} + \delta_{i+1\,j-1} - \delta_{i+1\,j} - \delta_{i\,j-1}, \\
    \, s_k\, &\mapsto 0.
  \end{aligned}\right.
\]
This is the composite of the projection \(\mathfrak{rb}(n) \to \mathfrak{p}(n)\) that sends all the \(s_k\) to zero and the isomorphism \(\mathfrak{p}(n)\cong \mathfrak{d}(n)\), cf.~\ref{sec:braids}. Define also
\[
  \gamma: \mathfrak{d}(n) \to \mathfrak{rb}(n),\;
  \delta_{ij} \mapsto \!\sum_{i\leq r<s\leq j} b_{rs} + \sum_{i\leq k\leq j} s_k.
\]
By rephrasing the proof of isomorphism \(\mathfrak{p}(n)\cong \mathfrak{d}(n)\) one sees \(\pi\circ\gamma = \id\). Both these functions are morphisms of collections of Lie algebras.

\subsubsection{Morphism of bimodules}

The morphism \(\pi\) is not  morphism of operads of Lie algebras. However, it does induce a morphism of infinitesimal \(\Com\)-bimodules
\[
  \pi^{\delta} : H_*(\mathfrak{rb}) \xrightarrow{\pi} H_*(M_0) \to H_*(M^{\delta}).
\]
We are here identifying the cohomology of the open moduli space with the Lie algebra cohomology of the dihedral braid Lie algebra, using the Knizhnik-Zamolodchikov connection. We verify this explicitly. Let \(\partial_i = (-)\circ_i m_2\). Then
\[
\begin{aligned}
  &(\pi\circ \partial_i-\partial_i\circ\pi)(s_i) = \delta_{i\,i+1} - 0, \\
  &(\pi\circ \partial_i - \partial_i\circ\pi)(b_{i-1\,i}) = (\delta_{i-1\,i+1} - \delta_{i\,i+1}) - \delta_{i-1\,i+1},\\  
  &(\pi\circ \partial_i - \partial_i\circ\pi)(b_{i\,i+1}) = (\delta_{i\,i+2} - \delta_{i\,i+1}) - \delta_{i\,i+2}.
\end{aligned}
\]
The discrepancy is in all cases \(\delta_{i\,i+1}\). In all cases except those listed we have \(\pi\circ\partial_i = \partial_i\circ\pi\). Interpreted in terms of chord diagrams this means that for all \(B\in H_*(\mathfrak{rb}(n-1))\) the difference \((\pi\circ \partial_i - \partial_i\circ \pi)(B)\in H_*(\mathfrak{d}(n))\) is a sum of chord diagrams of the form \(c_{i\,i+1}\wedge A\) where \(A\) is a diagram on the \(n\)-gon formed by cutting \(c_{i\,i+1}\) on the \((n+1)\)-gon. In particular, the difference consists of diagrams that are not prime.

The projection \(\pi\) commutes on the nose with the two left actions of the binary generator; \(\partial_0 = m_2\circ_2(-)\) and \(\partial_{n+1} = m_2\circ_1(-)\).

\subsubsection{Morphism of operads}\label{sec:gamma} 

The morphism \(\gamma\) is a morphism of operads. This was proved by the author in \cite{Alm16}. The proof is simple once it is noticed that
\(
  \gamma(\delta_c) = s_I \circ_I 0\) for \(
  \circ_I: \mathfrak{rb}(n/I) \oplus \mathfrak{rb}(I) \to \mathfrak{rb}(n),
\)
writing \(I\) for both the interval enclosed by the chord \(c\) and the corresponding element of \(n/I\). 

\subsection{Proof of the main theorem}\label{sec:theproof}

Consider the following diagram:
\[
\begin{tikzcd}
 & H^0(\As,\mathsf{BV})\ar{r}{\pi^{\delta}} & H^0(\As, H_*(M^{\delta})) \ar{dd}{\reg^{\vee}} \\
\Q^{\oplus 2}\oplus\mathfrak{grt}_1\ar{ur}{\cong}\ar{dr}{} & \\
 & H^0(\As, H_*(M_0))\ar{uu}{\gamma} & H^0(\As,\gr{}H_*(M_0))\mathrlap{.} \ar{l}{\cong}
\end{tikzcd}
\]
We know by proposition \ref{prop:HochBV} that
\(H^0(\As,\mathsf{BV}) =
  \Q\,
    \begin{tikzpicture}[baseline=-0.35ex,shorten >=0pt,auto,node distance=1cm]
      \node[ve] (w1) {};
      \node[ve] (w2) [right of=w1] {};
    \end{tikzpicture}
  \,\oplus\Q\,
  \begin{tikzpicture}[baseline=-.1ex,shorten >=0pt,auto,node distance=.7cm]
    \node[ve] (w1) {};
    \node[ve] (w2) [right of=w1] {};
    \node[ve] (w3) [right of=w2] {};
    \node[vi] (z) [above of=w2] {};  
    \path[every node/.style={font=\sffamily\small}]
      (w1) edge (z)
      (w2) edge (z)
      (w3) edge (z); 
  \end{tikzpicture}
  \,\oplus \mathfrak{grt}_1\), i.e., that the top map in the triangle on the left is an isomorphism. The triangle commutes because given \(\psi(x,y)\in\mathfrak{grt}_1\),
\[
  \gamma(\psi(\delta_{12},\delta_{23})) = \psi(b_{12}+s_1+s_2, b_{23} + s_2 + s_3) = \psi(b_{12}, b_{23}),
\]
as the \(s_k\)'s are central and \(\psi\) has nothing in depth zero. Thus \(\gamma\) is onto and \(\Q^{\oplus 2} \oplus\mathfrak{grt}_1\) injects into \(H^0(\As, H_*(M_0))\).

The composite of \(\reg^{\vee}\) and the bottom horizontal map is an isomorphism by theorem \ref{thm:isohoch}. Since regularisation is a splitting on the associated graded cooperad (cf.~\ref{sec:regcoop}) and \(\pi\circ\gamma=\id\), the effect of going once around the diagram beginning at the top right corner is the identity.

It follows that
\(
  H^0(\As,H_*(M^{\delta})) \to H^0(\As, \mathsf{BV})
\)
is injective, so \(\gamma\) is injective. Thus both \(\gamma\) and \(\pi^{\delta}\) are isomorphisms.  We conclude that the map from
\(
  \Q\triang \oplus\Q\pent\oplus\mathfrak{grt}_1\) to \(H^0(\As,H_*(M^{\delta})),
\)
from  \ref{thm:grtinj} is an isomorphism.

\subsubsection{Zeta of two, again}

In the complex \(C(\As,\BVGraphs)\),
\begin{align*}
  \partial\hspace{1pt} \biggl(\,
  \begin{tikzpicture}[baseline=.2ex,shorten >=0pt,auto,node distance=1.5cm]
    \node[ve] (w1) {};
    \node[ve] (w2) [right of=w1] {};
    \node[ve] (w3) [right of=w2] {};  
    \path[every node/.style={font=\sffamily\small}]
      (w1) edge[out=35, in=145] (w2)
      (w1) edge[out=75, in=105] (w3); 
  \end{tikzpicture}
  \,+\,
  \begin{tikzpicture}[baseline=.2ex,shorten >=0pt,auto,node distance=1.5cm]
    \node[ve] (w1) {};
    \node[ve] (w2) [right of=w1] {};
    \node[ve] (w3) [right of=w2] {}; 
    \path[every node/.style={font=\sffamily\small}]
      (w1) edge[out=75, in=105] (w3)
      (w2) edge[out=35, in=145] (w3); 
  \end{tikzpicture} 
  \,\biggr)
  &= 2\;
  \begin{tikzpicture}[baseline=.5ex,shorten >=0pt,auto,node distance=1.5cm]
    \node[ve] (w1) {};
    \node[ve] (w2) [right of=w1] {};
    \node[ve] (w3) [right of=w2] {};
    \node[vi] (z) [above of=w2] {};  
    \path[every node/.style={font=\sffamily\small}]
      (w1) edge (z)
      (w2) edge (z)
      (w3) edge (z); 
  \end{tikzpicture}\,,\\
  \partial_H \biggl(\,
  \begin{tikzpicture}[baseline=.2ex,shorten >=0pt,auto,node distance=1.5cm]
    \node[ve] (w1) {};
    \node[ve] (w2) [right of=w1] {};
    \node[ve] (w3) [right of=w2] {};  
    \path[every node/.style={font=\sffamily\small}]
      (w1) edge[out=35, in=145] (w2)
      (w1) edge[out=75, in=105] (w3); 
  \end{tikzpicture}
  \,+\,
  \begin{tikzpicture}[baseline=.2ex,shorten >=0pt,auto,node distance=1.5cm]
    \node[ve] (w1) {};
    \node[ve] (w2) [right of=w1] {};
    \node[ve] (w3) [right of=w2] {};
    \path[every node/.style={font=\sffamily\small}]
      (w1) edge[out=75, in=105] (w3)
      (w2) edge[out=35, in=145] (w3); 
  \end{tikzpicture} 
  \,\biggr)
  &= 2\;
  \begin{tikzpicture}[baseline=.2ex,shorten >=0pt,auto,node distance=1.5cm]
    \node[ve] (w1) {};
    \node[ve] (w2) [right of=w1] {};
    \node[ve] (w3) [right of=w2] {};
    \node[ve] (w4) [right of=w3] {};
    \path[every node/.style={font=\sffamily\small}]
      (w1) edge[out=75, in=105] (w3)
      (w2) edge[out=75, in=105] (w4); 
  \end{tikzpicture}\,.
\end{align*}
Here \(\partial_H\) is the Hochschild differential. It follows that the graphs \(
  \begin{tikzpicture}[baseline=-.1ex,shorten >=0pt,auto,node distance=.7cm]
    \node[ve] (w1) {};
    \node[ve] (w2) [right of=w1] {};
    \node[ve] (w3) [right of=w2] {};
    \node[vi] (z) [above of=w2] {};  
    \path[every node/.style={font=\sffamily\small}]
      (w1) edge (z)
      (w2) edge (z)
      (w3) edge (z); 
  \end{tikzpicture}
\) and \(
  \begin{tikzpicture}[baseline=-.2ex,shorten >=0pt,auto,node distance=.7cm]
    \node[ve] (w1) {};
    \node[ve] (w2) [right of=w1] {};
    \node[ve] (w3) [right of=w2] {};
    \node[ve] (w4) [right of=w3] {};
    \path[every node/.style={font=\sffamily\small}]
      (w1) edge[out=75, in=105] (w3)
      (w2) edge[out=75, in=105] (w4); 
  \end{tikzpicture}
\) are cohomologous. Equivalence of the latter with \(\pent\) is easily deduced.

\section{Context and consequences}

This section explains some salient consequences of our main theorem, and places the result in the wider context of our research field.

\subsection{Indecomposable multiple zeta values}\label{sec:QZ}

We here outline how our results elucidate the (partly conjectural) relationship between the Grothendieck-Teichm\"uller Lie algebra and the indecomposable quotient of the algebra of multiple zeta values. More could be said; we only outline the most important aspects. The questions touched upon are problems we hope to develop further in future work.

\subsubsection{Embedded associahedra}\label{sec:associahedra}

Let \(X(n)\subset M^{\delta}(\R)(n)\) be the cell defined by the inequalities \(0\leq u_c\leq 1\), for all chords \(c\in\chi_1(n)\). It is diffeomorphic as a semialgebraic manifold with corners to the \((n-2)\)-dimensional associaheder. Furthermore, the operad structure on the dihedral moduli spaces restricts to define an operad structure on the collection of these cells, such that the associated differential graded operad of fundamental chains is isomorphic to the nonsymmetric Koszul resolution \(\As_{\infty}\). 

Integration pairing thus defines a morphism
\[
  \Phi:\R\otimes\As_{\infty} \to \R\otimes H_*(M^{\delta}), \; m_n \mapsto \sum_{P\in P(n)} \int_{X(n)} \alpha_P \otimes P.
\]
In the above formula \(P(n)\) is the set of top degree prime gravity chord diagrams. By Brown's theorem \ref{thm:brownperiods}, all of the involved integrals are rational linear combinations of multiple zeta values, meaning it is enough to extend coefficients from \(\Q\) to the algebra \(Z\) of multiple zeta values. Moreover, every multiple zeta value occurs as such an integral.

Note that \(m_2\) maps to \(\triang\), \(m_3\) to zero, \(m_4\) to \(\zeta(2)\pent\),
\[
  m_5 \mapsto \zeta(3)\biggl(
  \begin{tikzpicture}[font=\scriptsize, baseline={([yshift=-.6ex]current bounding box.center)}]
    \node[regular polygon, shape border rotate=60,
          regular polygon sides=6, minimum size=.75cm, draw] at (0,0) (A) {};
    \path[thick] (A.corner 1) edge (A.corner 4)
                 (A.corner 1) edge (A.corner 5)
                 (A.corner 2) edge (A.corner 6);
  \end{tikzpicture}
  \,+\,
  \begin{tikzpicture}[font=\scriptsize, baseline={([yshift=-.6ex]current bounding box.center)}]
    \node[regular polygon, shape border rotate=60,
          regular polygon sides=6, minimum size=.75cm, draw] at (0,0) (A) {};
    \path[thick] (A.corner 1) edge (A.corner 4)
                 (A.corner 2) edge (A.corner 5)
                 (A.corner 2) edge (A.corner 6);
  \end{tikzpicture}
  \,+\,
  \begin{tikzpicture}[font=\scriptsize, baseline={([yshift=-.6ex]current bounding box.center)}]
    \node[regular polygon, shape border rotate=60,
          regular polygon sides=6, minimum size=.75cm, draw] at (0,0) (A) {};
    \path[thick] (A.corner 1) edge (A.corner 5)
                 (A.corner 2) edge (A.corner 5)
                 (A.corner 3) edge (A.corner 6);
  \end{tikzpicture}
  \,+\,
    \begin{tikzpicture}[font=\scriptsize, baseline={([yshift=-.6ex]current bounding box.center)}]
    \node[regular polygon, shape border rotate=60,
          regular polygon sides=6, minimum size=.75cm, draw] at (0,0) (A) {};
    \path[thick] (A.corner 1) edge (A.corner 5)
                 (A.corner 2) edge (A.corner 6)
                 (A.corner 3) edge (A.corner 6);
  \end{tikzpicture}
  \biggr),
\]
and so on. The only relations between the multiple zeta values implied by the morphism \(\Phi\) are the quadratic relations due to Stokes' theorem:
\[
  0 = \int_{\partial X(n)} \alpha_P = \sum_{c\in\chi_1(n)} \int_{X(n/I)\times X(I)} \Reg_c(\alpha_P),
\]
for \(P\) a prime gravity chord diagram of top degree minus one.

\subsubsection{Motivic version}

The Stokes' relations mentioned above are motivic, so it is fairly obvious that \(\Phi\) can be lifted to a morphism that has motivic periods as coefficients. We can actually be very explicit about this. Let \(D(n) = M^{\delta}(n)\setminus M_0(n)\) be the boundary normal crossing divisor and note that the associaheder \(X(n)\) has boundary on \(D(n)\). Define, for every prime gravity chord diagram \(P\) of top degree on the \((n+1)\)-gon, the framed (mixed Tate, unramified over \(\Z\)) motive
\[
  I_P = \bigl[X(n), \alpha_P, H^{n-2}\bigl(M^{\delta}(n), D(n)\bigr) \bigr].
\]
The period of this motive is the integral of \(\alpha_P\) over \(X(n)\). The Stokes' relations can be lifted to motivic relations that can be written entirely in terms of these framed motives. Let \(Z^{\mathfrak{m}}\) be the subalgebra of the algebra of all motivic periods of mixed Tate motives (unramfied over the integers), generated by the \(I_P\)'s. We then have a morphism
\[
  \Phi^{\mathfrak{m}} : Z^{\mathfrak{m}}\otimes \As_{\infty} \to Z^{\mathfrak{m}}\otimes H_*(M^{\delta}),\;
  m_n \mapsto \sum_{P\in P(n)} I_P \otimes P.
\]

\subsubsection{Maurer-Cartan interpretation}\label{sec:MC}

Set \(\Phi^{\mathfrak{m}}_+ = \Phi^{\mathfrak{m}} - \triang\). Thus, \(\Phi^{\mathfrak{m}}_+\) is a Maurer-Cartan element in the deformation complex
\[
  Z^{\mathfrak{m}} \otimes \Def\bigl(\As_{\infty} \xrightarrow{\scalebox{.7}{\triang}} H^*(M^{\delta})\bigr).
\]
Equivalently, it is a morphism \(C^*(\Def(\triang)) \to Z^{\mathfrak{m}}\) of differential graded algebras. First note that it takes values in the augmentation ideal \(Z^{\mathfrak{m}}_+\) of motivic multiple zeta values of strictly positive weights. The bracket \([\Phi^{\mathfrak{m}}_+,\Phi^{\mathfrak{m}}_+]\) thus has coefficients in the ideal \(Z^{\mathfrak{m}}_+ \cdot Z^{\mathfrak{m}}_+\) of nontrivial products, so modulo this ideal we obtain a morphism
\[
  \phi^{\mathfrak{m}} : s^{-1}\Def(\triang)^{\!\vee} \to \mathfrak{nz} = Z^{\mathfrak{m}}_+  / (Z^{\mathfrak{m}}_+)^2
\]
of differential graded vector spaces. The complex \(s^{-1}\Def(\triang)^{\vee}\) can be identified with the dual Hochschild cochain complex of \ref{sec:dualhoch}, i.e., it is spanned by prime gravity chord diagrams and is equipped with the corner-cutting differential. The morphism \(\phi^{\mathfrak{m}}\) is surjective, by construction. Let \(K\) denote its kernel. Then \(K^1=0\) for degree reasons so the corresponding morphism 
\[
  \varphi^{\mathfrak{m}} : H^0(s^{-1}\Def\bigl(\triang)^{\!\vee}\bigr) \to \mathfrak{nz}
\]
on cohomology is also surjective. It was the simple conceptual reasoning up to this conclusion that motivated the present paper. By our main theorem, we conclude that we have a surjection
\[
  \varphi^{\mathfrak{m}}: \Q\pent \oplus \mathfrak{grt}_1^{\vee} \to \mathfrak{nz}.
\]
The Drinfeld conjecture says that \(\mathfrak{grt}_1\) is isomorphic to a free completed Lie algebra on one generator in each odd degree. Since \cite{Brown12} proves that the indecomposables of the motivic multiple zeta values is dual to such a Lie algebra plus a one-dimensional piece corresponding to zeta of two, the conjecture is (a posteriori) equivalent to the conjecture that \(\varphi^{\mathfrak{m}}\) is an isomorphism.

\subsubsection{Conjectural isomorphism}\label{sec:conjecture}

That the map \(\varphi^{\mathfrak{m}}\) should be an isomorphism translates to \(H^0(K)=0\), something we can reformulate informally as the slogan: If a sum of prime chord diagrams is a shuffle product, then it ought to be exact for the corner-cutting differential. This informal statement can be formulated very concretely, using Brown's break-through result in \cite{Brown12}, that \(\mathfrak{nz}\) has a basis consisting of multiple zeta values corresponding to Lyndon words in the alphabet \(\{2,3\}\). Every such Lyndon word \(w\) defines a prime gravity chord diagram \(P_w\), so the Drinfeld conjecture can be reformulated combinatorially as follows:

\begin{conj*}
The prime gravity chord diagrams corresponding to Lyndon words in the alphabet \(\{2,3\}\) give a basis for the space of top degree prime gravity chord diagrams modulo the image of the corner-cutting differential.
\end{conj*}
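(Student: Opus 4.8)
Since the conjecture is, by \ref{sec:MC} together with our main theorem, literally a reformulation of the Drinfeld conjecture, any complete proof would settle a long-standing open problem; what follows is therefore a programme rather than an argument. The plan is to split the statement into linear independence and spanning of the family $\{P_w\}$ inside $H^0(s^{-1}\Def(\triang)^{\vee}) \cong \Q\pent \oplus \mathfrak{grt}_1^{\vee}$. Linear independence is immediate: by construction $\varphi^{\mathfrak{m}}$ sends the class $[P_w]$ to the class of the motivic multiple zeta value $\zeta^{\mathfrak{m}}(w)$ modulo products, and by Brown \cite{Brown12} these span, and in fact form a basis of, $\mathfrak{nz}$. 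Hence the whole content of the conjecture is that the $[P_w]$ \emph{span} $H^0$, equivalently that $\varphi^{\mathfrak{m}}$ is injective, equivalently — counting dimensions in each weight and using \cite{Brown12} — that $\dim\mathfrak{grt}_1$ is no larger than Drinfeld predicts. The one thing the reformulation buys us is that the object to be controlled is now completely explicit and finite in each weight: in weight $N$ one has a finite-dimensional complex spanned by prime gravity chord diagrams on polygons of bounded size, equipped with the combinatorial corner-cutting differential, and the task is to show its $H^0$ is spanned by the $P_w$.

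The constructive route I would pursue is to build an explicit homotopy retraction of this complex onto the span of $\{P_w\}$. Concretely: fix a well-founded order on prime gravity chord diagrams (for instance refining the depth, or the number of size-three blocks in the tessellation picture of \ref{thm:almpetersen}), declare the $P_w$ minimal, and produce, for each top-degree prime gravity chord diagram $G$ not among the $P_w$, an explicit codimension-one prime gravity chord diagram $h(G)$ with $\dr\,h(G) = G + (\text{strictly smaller diagrams})$; iterating, every $G$ is rewritten modulo $\dr$ as a rational combination of the $P_w$. The natural source of such a homotopy is Brown's own proof that the Lyndon words span $\mathfrak{nz}$, whose main tool is the motivic coaction with its operators $D_{2r+1}$ and an induction on depth; under the dictionary of \ref{sec:liecorr} and the residue formula $\Res_c = \Reg_c \circ \partial/\partial\alpha_c$ the infinitesimal coaction is visibly assembled from exactly the corner-cutting and residue maps that act on gravity chord diagrams, so one would try to lift Brown's recursion from the abelianisation $\mathfrak{nz}$, where only products have been killed, to the chain level, where the \emph{only} relations are the quadratic Stokes relations realised by $\dr$ (cf.~\ref{sec:associahedra}).

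The main obstacle is precisely this passage from $\mathfrak{nz}$ to the chain complex: a priori $H^0$ could carry classes invisible to periods — that is, $\mathfrak{grt}_1$ could exceed the motivic Lie algebra — and excluding this is the hard half of Drinfeld's conjecture, for which no mechanism is currently available. In our terms the difficulty is that the corner-cutting differential is purely topological and combinatorial, whereas what is needed is that the relations it imposes already encode the full motivic Galois action; equivalently, that the homotopy $h$ above exists over $\Q$ rather than only after extending scalars to $Z^{\mathfrak{m}}$. Short of that, two partial targets look reachable with the present machinery: (i) a machine verification of the conjecture weight by weight, in each weight a finite rational linear-algebra computation, which would at least confirm it through the range where $\dim\mathfrak{grt}_1$ is independently known and would expose the combinatorial shape of $h$; and (ii) a proof in low depth, where the structure of $\mathfrak{grt}_1$ is well understood via period polynomials, by matching the corresponding sub-families of prime gravity chord diagrams directly and checking that the non-Lyndon diagrams are corner-cutting exact.
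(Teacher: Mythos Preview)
There is nothing to compare against: the paper states this as a \emph{conjecture}, explicitly presented as a combinatorial reformulation of the Drinfeld conjecture, and offers no proof. You have correctly identified this situation in your opening sentence, so your submission is not really a proof proposal but a research programme, and should be read as such.

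As a programme it is sensible and tracks the paper's own framing closely. Your split into linear independence (via $\varphi^{\mathfrak{m}}$ and Brown's basis of $\mathfrak{nz}$) and spanning (equivalent to injectivity of $\varphi^{\mathfrak{m}}$, hence to the hard direction of Drinfeld) is exactly the dichotomy implicit in \ref{sec:MC}--\ref{sec:conjecture}. One small caveat on the linear independence step: the paper does not spell out the assignment $w\mapsto P_w$ in enough detail to make the claim $\varphi^{\mathfrak{m}}([P_w]) = [\zeta^{\mathfrak{m}}(w)]$ literally obvious; you are implicitly assuming that the period integral $I_{P_w}$ equals $\zeta^{\mathfrak{m}}(w)$ modulo products, which is true for the natural choice of $P_w$ but deserves a line of justification if you write this up.

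Your honest identification of the obstacle --- that lifting Brown's recursion from $\mathfrak{nz}$ to the chain level is precisely where the unknown half of Drinfeld lives --- is the right diagnosis, and the paper says nothing beyond this either.
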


\subsection{Relative non-formality}
The collection \(X = \{ X(n) \}\) of the embedded associahedra constitute a copy of Stasheff's topological \(A_{\infty}\) operad. We note that the embedding \(\iota: X \to M^{\delta}(\C)\) of operads induces the map \(m_2\mapsto\triang\) on the associated homology operads. Now, the operad \(X\) is formal. It is proved in \cite{AlmPetersen17,DupontVallette17} that the operad of Brown's moduli spaces is formal, too. Recall that these formality-statements mean that there exists differential graded operads \(P\) and \(Q\), and zig-zags of quasi-isomorphisms
\[
  C_*(X) \leftarrow P \to H_*(X)\;\;\text{and}\;\; C_*(M^\delta) \leftarrow Q \to H_*(M^\delta).
\]
It is natural to ask if the embedding \(\iota: X \to M^{\delta}(\C)\) is formal as a morphism of operads, in the sense that the formalities can be designed in such a way as to fit in a commutative diagram
\[
\begin{tikzcd}
C_*(X)\ar{r}{\iota_*} & C_*(M^{\delta}) \\
P\ar{u}{\simeq}\ar{r}\ar{d}[swap]{\simeq} & Q\ar{u}[swap]{\simeq}\ar{d}{\simeq} \\
H_*(X)\ar{r}{\scalebox{.7}{\triang}} & H_*(M^{\delta})\mathrlap{.}
\end{tikzcd}
\]
Let us call this \emph{relative formality}.

Our results prove that \(\iota:X\to M^{\delta}(\C)\) is \emph{not} relatively formal. Namely, \(\iota\) is on the chain-level equivalent to the morphism \(\Phi^{\mathfrak{m}}\) of \ref{sec:MC}, which is equal to the non-trivial deformation \(\Phi^{\mathfrak{m}}_+\) of \(\triang\). To see that \(\iota\) indeed corresponds to \(\Phi^{\mathfrak{m}}\) one can argue as follows. Take chains to mean currents. Then there is a quasi-isomorphism \(C_*(M^{\delta})\to \R\otimes H_*(M^{\delta})\), identifying the latter as real-valued functionals on the explicit algebra of differential forms \(H^*(M^{\delta})\). Mapping a fundamental cell \([X(n)]\) to the current given by integration over it defines a quasi-isomorphism \(\R\otimes \As_{\infty}\to C_*(X)\). The composite
\[
  \R\otimes \As_{\infty}\to C_*(X) \xrightarrow{\iota_*} C_*(M^{\delta})\to \R\otimes H_*(M^{\delta})
\]
exactly the morphism \(\Phi\) given in \ref{sec:associahedra}, which in turn is \(\Phi^{\mathfrak{m}}\) under period evaluation. Note that the role of integration pairing played in the argument of relating \(\iota\) to some map \(\mathbb{k}\otimes\As_{\infty} \to \mathbb{k}\otimes H_*(M^{\delta})\), for a \(\Q\)-algebra \(\mathbb{k}\), could be replaced by some other comparison isomorphism (now over \(\mathbb{k}\)) between Betti and de Rham cohomology, which would correspond to a \(\mathbb{k}\)-valued evaluation of the coefficients of \(\Phi^{\mathfrak{m}}\).

\section{The Tamarkin argument}

The elegant preprint \cite{Tamarkin02} by Tamarkin contains as one of its main results a proof that the degree zero Harrison cohomology of the Gerstenhaber operad equals the Grothendieck-Teichm\"uller Lie algebra, plus a one-dimensional term that corresponds to rescaling the product:
 \[
 H^0(\Com,\Ger) \cong \Q\oplus\mathfrak{grt}_1.
 \]
In this section we reconstruct a crucial step in Tamarkin's proof. We do this because Tamarkin's preprint contains some minor inaccuracies that we take this opportunity to rectify.

\subsection{Right modules}\label{sec:rightmod}

Let \(\Mod{P}\) be the category of right modules for some differential graded operad \(P\) and let \(\Bimod{P}\) be the category of infinitesimal \(P\)-bimodules. We write \(\Coll\) for the category of differential graded collections. In analogy to the classical theory of modules over a ring, there is a functor
\begin{gather*}
  \mathbb{I}\infcirc_{P}- : \Bimod{P} \to \Mod{P},\\
  \mathbb{I}\infcirc_{P}\! M  = 
  \mathsf{Coeq}\bigl(\mathbb{I} \infcirc P \infcirc M \rightrightarrows \mathbb{I}\infcirc M \cong M\bigr),
\end{gather*}
with a right inverse functor 
 \[
  P\infcirc- : \Mod{P} \to \Bimod{P},
 \]
given by adding a free infinitesimal left module action. Moreover, the forgetful functor \(\Mod{P} \to \Coll\) from right modules to collections of differential graded vector spaces has a left adjoint, the free module \((-)\circ P\), and a right adjoint, the cofree (or coinduced) module
\begin{gather*}
  (-)^P : \Coll \to \Mod{P},\\ U^P(n) = \Map_{\bS}(P^{\sconv n}, U).
\end{gather*}
Here \(\Conv\) is the \emph{Day convolution},
\[
  U\Conv V (n) = \int^{I,\hspace{.4pt}J\in\bS} \bS(n, I+J)\otimes U(I)\otimes V({\mkern 2mu}J) = \bigoplus_{[n]=I+J} U(I)\otimes V({\mkern 2mu}J).
\]
The formula for \(U^P\) follows from identifying it as the right Kan extension of \(U\), considered as a presheaf on the groupoid \(\bS\) of finite sets and bijections, to a presheaf on the PROP defined by \(P\), noting that this PROP has as space of maps from \(I\) to \(J\) the object \(P^{\sconv J}(I)\).

\subsubsection{Cofree injective resolutions}

It follows from the adjunctions that if \(U\) is an injective object in the category of differential graded collections, then \(U^P\) is an injective right module. Moreover, if \(N\to U\) is a monomorphism of collections into an injective collection \(U\), then from the natural monic \(N \to N^P\) we deduce a monomorphism \(N\to U^P\) of right modules. Thus, any right module admits an injective resolution by cofree modules. 

\subsection{Cohomology of cofree right modules}

Fix a morphism of differential graded operads \(E \to P\) and assume \(E\) is Koszul. We may regard every infinitesimal \(P\)-bimodule as an infinitesimal \(E\)-bimodule. Then, for \(U\) any differential graded collection,
\begin{align*}
  C(E, P\infcirc U^P) &= \Map_{\bS}\bigl(\susp{-1}E^{\antishriek}_+, P\infcirc \Map_{\bS}(P^{\sconv},U)\bigr) \\
  &= \prod_{m\geq 1}C(E\to P, m)\otimes_{\bS_m}\! U(m),
\end{align*}
where we define
\[
  C(E\to P, m) = \prod_{\substack{n\geq 1 \\ I\subset [n]}} 
  s\Sigma E^{\hspace{.5pt}!}_+(n)\,\otimes_{\bS_n} P(n/I)\otimes \mathit{coP}^{\sconv I}(m).
\]
This decomposition of \(C(E, P\infcirc U^P)\) is clearly functorial and, in particular, it is compatible with differentials.

\subsubsection{The Harrison case}\label{sec:harrcofree}

Abbreviate \(C(\Com, m)=C(\Com\xrightarrow{\id}\Com, m)\). Thus, from the preceding subsection we have
\[
  C(\Com, m) = \prod_{\substack{n\geq 2 \\ I\subset [n]}} 
  \susp{2-n}\bigl(\mathsf{sgn}_n\otimes \Lie(n)\bigr)\otimes_{\bS_n}\!\bigl(\Com(n/I)\otimes \mathsf{coCom}^{\sconv I}(m)\bigr).
\]
The term \(\mathsf{coCom}^{\sconv I}(m)\) is a sum over partitions of \([m]\) into subsets indexed by \(I\). We can represent such by surjections \([m]\to I\). Since we are also summing over inclusions \(I\subset [n]\) and every function is a surjection onto its image, we can combine both sums into a sum over arbitrary functions \(f:[m]\to [n]\). Given such a function, write the formal expression
\[
  \bigl(\prod_{i\in f^{-1}(1)} x_i \mid \dots \mid \prod_{i\in f^{-1}(n)}x_i\bigr).
\]
If \(f^{-1}(k)=\emptyset\) we interpret the product as \(1\). These expressions are a basis for the subspace of \((s^{-1}\Q[x_1,\dots, x_m])^{\otimes n}\) that is of homogeneous degree \(1\) in each variable \(x_i\). Call this subspace \(\mathsf{gr}^{(1,\dots, 1)}\, (s^{-1}\Q[x_1,\dots, x_m])^{\otimes n}\). A function \(f:[m]\to [n]\) can be recovered from the formal tensor expression we associated to it, and together with the choice of grading and natural permutation action, we may identify
\begin{align*}
  C(\Com, m) &= \prod_{n\geq 2} 
    \susp{2} \Lie(n) \otimes_{\bS_n} \mathsf{gr}^{(1,\dots,1)}\bigl(\susp{-1}\Q[x_1,\dots, x_m]\bigr)^{\otimes n} \\
  &= \susp{2}\mathsf{gr}^{(1,\dots,1)}\,\Lie_+\Bbrackets{\susp{-1}\Q[x_1,\dots, x_m]}
\end{align*}
as the subspace of the completed free Lie algebra on the suspension of the cofree cocommutative coalgebra \(\Q[x_1,\dots,x_m]\) that is spanned by formal Lie words that contain at least one bracket, and are of homogeneous degree \((1,\dots,1)\). The full \(\Lie\bbrackets{s^{-1}\Q[x_1,\dots,x_m]}\) is the underlying graded vector space of the completed non-reduced Harrison cobar construction on the cocommutative polynomial coalgebra. However, the differential in the present case is \emph{not} that bar differential.\footnote{\;Tamarkin claims that it is.} The non-reduced cobar differential acts like, 
\[
  (\,a\mid xy \mid b\,) \mapsto \pm\bigl(\,a\mid 1\otimes xy + x\otimes y + y\otimes x + xy\otimes 1\mid b\,\bigr) + \dots 
\]
whereas our differential here will act as
\[
  (\,a\mid x\mid b\,) \mapsto \pm\bigl(\,a\mid x\otimes y + y\otimes x\mid b\,\bigr) + \dots.
\]
Concretely, the part of the differential defined by the right action of \(\Com\) acts on \(x\)'s just like on a reduced Harrison cohomology complex: it cannot create \(1\)'s, \emph{unless it acts on a} 1, in which case it will split it into two \(1\)'s. However, we can filter by the tensor length \(n\) minus the number \(k\) of tensors not equal to one, that is to say, \(k\) is the cardinality of a subset \(I\) in
\[
  \bigl(\mathsf{sgn}_n\otimes \Lie(n)\bigr)\otimes_{\bS_n}\!\bigl(\Com(n/I)\otimes \mathsf{coCom}^{\sconv I}(m)\bigr).
\]
This way we first consider the cohomology of reduced Harrison complexes 
\[
  \prod_{k\geq 1} \Lie(k) \otimes_{\bS_k} \!\bigl(\susp{-1}\Q[x_1,\dots,x_m]_+ \bigr)^{\otimes k}
\]
which, of course, is the space of generators, \(\Q x_1\oplus\dots\oplus\Q x_m\). Adding the condition on homogeneous multidegree, we are led to conclude that \(C(\Com, m)\) is acyclic if \(m\neq 1\), and if \(m=1\) it is quasi-isomorphic to the subcomplex spanned by words consisting of any number of \(1\)'s, and a single \(x_1\). This subcomplex in turn is easily seen to have one-dimensional cohomology, given by
\[
  (\,1\mid x_1\,) + (\,x_1\mid 1\,).
\]
Thus: The left action of the generator \(m_2\) of \(\Com\) defines a quasi-isomorphism
\[
  m_2\circ_1(-) + m_2\circ_2(-) : U(1) \to C\bigl(\Com, \Com\infcirc U^\Com\bigr)
\]
from \(U(1)\) to the Harrison cohomology of the infinitesimal bimodule freely generated by the cofree right module on \(U\).

\subsubsection{Derived functor interpretation}

The Harrison cohomology
\[
  C\bigl(\Com, \Com\infcirc(-)\bigr): \Mod{\Com} \to \dgV
\]
represents the total right derived functor of the functor that takes a module \(M\) to the kernel of the right action \(M(1)\otimes \Com(2)\to M(2)\). Indeed, if \(M=U^{\Com}\), then this kernel is \(U(1)\) and, as we have seen, we have a monomorphic quasi-isomorphism \(U(1)\to C(\Com,\Com\infcirc M)\). Every right module admits an injective resolution by cofree modules, and the statement follows.

\subsubsection{A monoidal property}

We shall here prove here that \(\Com\infcirc(-)^\Com\) is strong monoidal for the Day convolution and the arity-wise tensor product. First note
\[
  (U\Conv V)^{P}(n) \cong \int_{I,\hspace{.4pt}J}\Map\Bigl(P^{\sconv n}(I+J), U(I)\otimes V({\mkern 2mu}J) \Bigr).
\]
If \(P\) is a unitary operad, meaning \(P(0)=\Q\), which by operad composition gives distinguished ``projections'' \(P(k)\to P(k-1)\), and additionally is a Hopf operad, meaning it has a diagonal morphism \(P\to P\otimes P\) to the arity-wise tensor product with itself, then there will be morphisms
\begin{equation}\label{eqn:monoid}
  P^{\sconv n}(I+J) \to \bigoplus_{\substack{ K,L\subset [n] \\ [n]=K\cup L}} 
  P^{\sconv K}(I)\otimes P^{\sconv L}({\mkern 2mu}J).
\end{equation}
Namely, \(P^{\sconv n}(I+J)\) is a direct sum over partitions of \(I+J\) into \(n\) parts, which is to say, over surjections \(f:I+J\to [n]\). Given such a surjection, let \(I_i = I\cap f^{-1}(i)\) and \(J_j = J\cap f^{-1}({\mkern 1.5mu}j)\). Then take \(K = \{i \mid I_i\neq\emptyset\}\) and \(L=\{{\mkern 1mu}j\mid J_j\neq\emptyset\}\). Evident use of the Hopf diagonal and the projections then maps into component \(P^{\sconv K}(I)\otimes P^{\sconv L}({\mkern 2mu}J)\).

The operad \(\Com\) has an extension to a such a unitary Hopf operad, the operad \(\mathsf{uCom}\) of commutative algebras with a unit. In this way we get a morphism as in equation \ref{eqn:monoid}.
Since \(\Com(n) = \Q\) for all \(n\), we deduce by comparing the index sets that it is an isomorphism. (The map can be written down in a more direct way, but we thought it worth-wile to indicate that such a morphism is not unique to the commutative operad.) This directly implies that
\[
 (U\Conv V)^{\Com}(n) = \bigoplus_{\substack{ K,L\subset [n] \\ [n]=K\cup L}} U^{\Com}(K)\otimes U^{\Com}(L).
\]
Then, \(\Com\infcirc(-)^\Com\) is monoidal because
\begin{gather*}
 \bigoplus_{\substack{K\subset [n]}} \Com(n/K)\otimes U^{\Com}(K)\otimes \bigoplus_{\substack{L\subset [n]}} \Com(n/L) \otimes V^{\Com}(L) \\
 = \bigoplus_{M\subset [n]} \Com(n/M)\otimes 
       \bigoplus_{\substack{ K,L\subset M \\ M=K\cup L}} U^{\Com}(K)\otimes U^{\Com}(L)
\end{gather*}

\subsubsection{Vanishing of Harrison cohomology}

The commutative operad has a canonical morphism \(\Com \to \Com\otimes\Com\) (it is a Hopf operad), so the arity-wise tensor product \(M\otimes N\) of two infinitesimal \(\Com\)-bimodules is again an infinitesimal bimodule. It follows from the two preceding sections that if \(M=\Com\infcirc U^{\Com}\) and \(N=\Com\infcirc V^{\Com}\), then \(C(\Com, M\otimes N)\) is acyclic, because \(M\otimes N\) is isomorphic to \(\Com\infcirc (U\Conv V)^{\Com}\) and hence
\[
  C(\Com, M\otimes N) \simeq (U\Conv V)(1) = 0,
\]
since we allow no collections to have a component in arity zero. Since every right module has an injective resolution by cofree modules, it actually holds for all \(M\) and \(N\) that are freely generated by right \(\Com\)-modules.

\subsubsection{Bimodules with free left action}

If an infinitesmial \(P\)-bimodule \(M\) has a free infinitesimal left action, then \(M=P\infcirc(\mathbb{I}\infcirc_P \!M)\) when considered as just a left (infinitesimal) module, but the equality need not hold as a bimodule.

If for example \(P=\Com\) and \(M\) has an extension to a module for the unitary commutative operad \(\mathsf{uCom}\) (which has \(\mathsf{uCom}(0)=\Q\), that act on a module as ``projections'' \(M(n)\to M(n-1)\), akin to codegeneracies of a cosimplicial set), then \(M\) automatically has a free left action because in this case all left actions must be monomorphisms.\footnote{\;Compare with cosimplicial sets: the codegeneracies enforce that coface maps must be injective.} We have excluded arity zero and hence unitary operads, but many modules considered elsewhere in the paper have such an extension implicit and, thus, have free left actions. That an infinitesimal bimodule is not freely generated by a right module simply because the left action is free is a technical nuisance, that we shall need to work around.\footnote{\;Tamarkin claims that \(\Bimod{\mathsf{uCom}}\) and \(\Mod{\Com}\), called by him \(\textbf{Shpart}\) and \(\textbf{Shsur}\), are equivalent categories, failing to make this distinction between free left action and freely generated by a right module.}

If \(M\) is an infinitesimal \(\Com\)-bimodule with a free left action, then as a graded vector space
\[
  C(\Com, M) = \prod_{\substack{n\geq 2\\ I\subset n}} s\Sigma\,\Lie(n) \otimes_{\bS_n} \Com(n/I)\otimes N(I)
\]
with \(N=\mathbb{I}\infcirc_{\Com} M\). Filter this by \(n-\#I\). The associated graded complex is equal to \(C(\Com, \Com\infcirc N)\). We may assume \(N=U^{\Com}\) because the category of right modules has enough injectives of the form \(U^{\Com}\). The cohomology only depends on \(U(1)\), as we have seen. It follows that the cohomology of \(M\otimes M\) vanishes because the spectral sequence then depends on \((U\Conv U)(1)\).

\begin{lem}\label{lem:tamarkin}
If\, \(M\) and \(N\) are infinitesimal \(\Com\)-bimodules with free left actions, then the Harrison cohomology of\, \(M\otimes N\) is acyclic.
\end{lem}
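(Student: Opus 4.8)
The plan is to run, for the arity-wise tensor product $M\otimes N$, the same reduction that the preceding subsections carry out for a single left-free bimodule, and to feed in the monoidal property of $\Com\infcirc(-)^{\Com}$ at the very end. The first step is to observe that $M\otimes N$, equipped with the infinitesimal $\Com$-bimodule structure induced by the Hopf diagonal $\Com\to\Com\otimes\Com$, again has a free left action. Indeed, writing $M(n)=\bigoplus_{I\subseteq[n]}\Com(n/I)\otimes P(I)$ and $N(n)=\bigoplus_{J\subseteq[n]}\Com(n/J)\otimes Q(J)$ with $P=\mathbb{I}\infcirc_{\Com}M$ and $Q=\mathbb{I}\infcirc_{\Com}N$, and using that every component of $\Com$ is one-dimensional, one may reorganise $M(n)\otimes N(n)$ by the union $K=I\cup J$ of the two sets of essential inputs; this exhibits $M\otimes N$ as $\Com\infcirc R$ as a left module, where $R(K)=\bigoplus_{I\cup J=K}P(I)\otimes Q(J)$, so that $\mathbb{I}\infcirc_{\Com}(M\otimes N)=R$.

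Having this, I would invoke the description of $C(\Com,-)$ on left-free bimodules recalled in the subsection on bimodules with free left action: filtering $C(\Com,M\otimes N)$ by $n-\#K$ has associated graded the Harrison complex $C(\Com,\Com\infcirc R)$, and hence there is a spectral sequence with $E_{1}=H\bigl(C(\Com,\Com\infcirc R)\bigr)$ abutting to $H\bigl(C(\Com,M\otimes N)\bigr)$. It therefore suffices to show that $C(\Com,\Com\infcirc R)$ is acyclic.

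To see this I would first dispose of the cofree case and then resolve. If $P=U^{\Com}$ and $Q=V^{\Com}$ are cofree, the reorganisation above is exactly the monoidal-property isomorphism $(\Com\infcirc U^{\Com})\otimes(\Com\infcirc V^{\Com})\cong\Com\infcirc(U\Conv V)^{\Com}$, so $R=(U\Conv V)^{\Com}$, and by the computation of the Harrison cohomology of a bimodule freely generated by a cofree right module (\ref{sec:harrcofree}) one gets $H\bigl(C(\Com,\Com\infcirc R)\bigr)\cong(U\Conv V)(1)$. This is zero, since $(U\Conv V)(1)=\bigoplus_{[1]=I+J}U(I)\otimes V(J)$ has an arity-zero tensor factor in each summand and no collection in this paper has an arity-zero component. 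For general $P$ and $Q$ I would take cofree injective resolutions $P\to U^{\Com}_{\bullet}$ and $Q\to V^{\Com}_{\bullet}$ in $\Mod{\Com}$, available by \ref{sec:rightmod}. Since $R$ depends on $P$ and $Q$ through operations exact in each variable, and since $C(\Com,-)$ sends quasi-isomorphisms to quasi-isomorphisms (\ref{sec:defquism}), the complex $C(\Com,\Com\infcirc R)$ is quasi-isomorphic to the totalisation of the double complex with $(i,j)$-term $C\bigl(\Com,\Com\infcirc(U_{i}\Conv V_{j})^{\Com}\bigr)$. Each such term is acyclic by the cofree case, and a first-quadrant double complex with acyclic rows has acyclic totalisation; hence $E_{1}=0$ and $C(\Com,M\otimes N)$ is acyclic.

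The only point I expect to require real care is the bookkeeping in the first two steps — verifying that the left action and the twisted part of the right action on $M\otimes N$ strictly raise the $n-\#K$ filtration, so that the associated graded really is the untwisted Harrison complex $C(\Com,\Com\infcirc R)$, and that the exact operations producing $R$ commute with cofree resolutions as claimed. The homological input that makes the proof go through, namely $(U\Conv V)(1)=0$, is immediate once these reductions are in place, and the argument is otherwise a verbatim transcription of the $M\otimes M$ case with $U\Conv U$ replaced by $U\Conv V$.
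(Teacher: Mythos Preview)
Your proposal is correct and follows essentially the same route as the paper. The paper's argument for the lemma is compressed into a single sentence (``the spectral sequence then depends on \((U\Conv U)(1)\)''), and what you have written is a faithful unpacking of it: observe that \(M\otimes N\) again has free left action, filter by \(n-\#K\) to reduce to \(C(\Com,\Com\infcirc R)\), invoke the monoidal identity to identify \(R\) with \((U\Conv V)^{\Com}\) in the cofree case, and resolve in general. Your flagging of the bookkeeping points (that the twisted part of the right action strictly raises filtration, and that resolving \(P,Q\) separately is compatible with forming \(R\)) is apt; these are precisely the details the paper elides.
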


\bibliographystyle{plain}
\bibliography{dihgrt}

\end{document}